\numberwithin{equation}{section}
\numberwithin{figure}{section}
\theoremstyle{plain}
\newtheorem{thm}{\protect\theoremname}[section]
\theoremstyle{definition}
\theoremstyle{plain}
\newtheorem{lem}[thm]{\protect\lemmaname}
\theoremstyle{plain}
\newtheorem{cor}[thm]{\protect\corname}
\theoremstyle{plain}
\newtheorem{prop}[thm]{\protect\propositionname}
\theoremstyle{plain}
\newtheorem*{claim*}{\protect\claimname}
\theoremstyle{remark}
\newtheorem{remark}[thm]{\protect\remarkname}
\theoremstyle{plain}
\newtheorem{assum}[thm]{Assumption}
\def\R{\mathbb R}
\def\eps{\varepsilon}
\def\I{\operatorname{I}}
\providecommand{\definitionname}{Definition}
\providecommand{\lemmaname}{Lemma}
\providecommand{\propositionname}{Proposition}
\providecommand{\theoremname}{Theorem}
\providecommand{\remarkname}{Remark}
\providecommand{\claimname}{Claim}
\providecommand{\corname}{Corollary}
\theoremstyle{plain}
\newtheorem{claim}[thm]{Claim}
\title[Scattering for NLS under damped strong trapping]{Scattering for defocusing cubic NLS under locally damped strong trapping}
\author[D. Lafontaine]{David Lafontaine}
\address{CNRS and Institut de Mathématiques de Toulouse ; UMR5219, Université de Toulouse ; CNRS, UPS IMT, F-31062 Toulouse Cedex 9 (France)}
\email{david.lafontaine@math.univ-toulouse.fr}
\author[B. Shakarov]{Boris Shakarov}
\address{Institut de Mathématiques de Toulouse ; UMR5219, Université de Toulouse ; CNRS, UPS IMT, F-31062 Toulouse Cedex 9 (France)}
\email{boris.shakarov@math.univ-toulouse.fr}
\thanks{The work of B. S. is 
  partially supported by CIMI ANR-11-LABX-0040 and the 
  ANR project NQG ANR-23-CE40-0005}
  \subjclass[2010]{35Q55 (35B40)}
\date{\today}
\keywords{nonlinear Schr\"odinger equation, scattering, damping, trapped trajectories}
\begin{document}

\begin{abstract}
We are interested in the scattering problem for the cubic 3D nonlinear defocusing Schr\"odinger equation with variable coefficients. Previous scattering results for such problems address only the cases with constant coefficients or assume strong variants of the non-trapping condition, stating that all the trajectories of the  Hamiltonian flow associated with the operator are escaping to infinity. In contrast, we consider the most general setting, where strong trapping, such as stable closed geodesics, may occur, but we introduce a compactly supported damping term
 localized in the trapping region, to explore how damping can mitigate the effects of trapping.

In addition to the challenges posed by the trapped trajectories, notably the loss of smoothing and of scale-invariant Strichartz estimates, difficulties arise from the damping itself, particularly since the energy is not, a priori, bounded. For 
$H^{1+\epsilon}$ initial data -- chosen because the local-in-time theory is a priori no better than for 3D unbounded manifolds, where local well-posedness of strong $H^1$ solutions is unavailable -- we establish global existence and scattering in $H^{s}$ for any $0 \leq s <1$ in positive times,  the inability to reach $H^1$ 
being related to the loss of smoothing due to trapping.
 
\end{abstract}

\maketitle

\VerbatimFootnotes 

\section{Introduction}
We are interested in the following cubic defocusing nonlinear Schr\"odinger equation with a variable-coefficients Laplacian in divergence form and a compactly supported damping by a potential
\begin{equation} \label{eq:DNLS}
\begin{cases}
    i\partial_t u  + \Delta_G u + iau = |u|^2 u, \\
    u(0) = u_0 \in H^s(\R^3).
\end{cases}
\end{equation}
Here $\Delta_G$ denotes
$$
\Delta_G u:= \operatorname{div}(G \nabla u),
$$
with $G\in C^{\infty}(\R^{3\times 3}, \mathbb R)$ symmetric and uniformly positive-definite, in the sense that
$$
\exists c>0,  \hspace{0.3cm} \forall x \in \R^3,  \hspace{0.3cm} \forall \xi \in \R^3, \hspace{0.3cm} G(x)\xi \cdot \xi \geq c |\xi|^2,
$$
and $G = \I$ outside a compact set
$$
G - \I \in C^\infty_c(\R^{3\times 3}, \mathbb R). 
$$ 
In addition, the damping potential $a$ is non-negative and compactly supported
$$
a \in C^\infty_c(\mathbb R^3, \mathbb R_+),
$$
and in what follows, it will typically be active where $G-\I \neq 0$. 
The following quantities, respectively the total mass and energy, are conserved in the case $a=0$ and therefore will play a crucial role:
\begin{equation*}
    M[u(t)] := \int |u(t,x)|^2 dx, \hspace{0.4cm}E[u(t)] := \frac{1}{2} \int G(x) \nabla u(t,x) \cdot \nabla \bar u(t,x) \, dx + \frac{1}{4} \int |u(t,x)|^4 \, dx.
\end{equation*}
We are interested in the forward global-in-time behavior of solutions, more specifically, in showing scattering to linear solutions.

To motivate the problem, let us briefly review the related problems when $a=0$. When $G = \I$ and $a = 0$, equation (\ref{eq:DNLS}) have a rich history that we will not attempt to 
fully review here. In particular, 
the conservation laws together with the local theory give global-well-posedness when the data is in $H^1(\mathbb R^3)$, and it is known since the work of Ginibre and Velo \cite{GV85scat} that solutions scatter in $H^1$ to linear solutions, in the sense that there exists $u_\pm \in H^1(\mathbb R^3)$ so that
$$
\Vert u(t) - e^{it\Delta} u_\pm \Vert_{H^1(\mathbb R^3)} \to 0 \hspace{0.3cm}\text{ as }\hspace{0.3cm} t \to \pm\infty.
$$
Whereas we are not aware of any scattering result in the case $G \neq \I$ and $a=0$, scattering for $a=0$ has been investigated in at least two related inhomogeneous situations: the case where $G = \I$, but with the equation posed outside an obstacle (i.e. in $\mathbb R^3 \backslash \Theta$, with $\Theta$ compact with smooth boundary) with Dirichlet boundary conditions \cite{PV1, PV2, IvPl10, Ab15, KiViXi16a, KiViXi16b, BiTeQi22}, and the case where $G = \I$, but a perturbation $Vu$ by a potential is added \cite{BaVi16, Ho16, La16}. In both cases, scattering results rely crucially on strong \emph{non-trapping} conditions. The most general version of this condition states that all the Hamiltonian trajectories (these are parametrized curves in $\mathbb R_x^3 \times \mathbb R_\xi^3$) associated with the principal symbol of the operator having the role of Laplacian exit any compact of space in finite time. In the case of $G=\I$, these trajectories project in space to straight lines; outside an obstacle, reflections have to be taken into account and these are the so-called generalized broken bi-characteristics. In the case of  general $-\Delta_G$, the trajectory from $(x_0, \xi_0)$ is the solutions of the Hamilton equation
\begin{equation} \label{eq:Ham}
\dot x(t) =  \nabla_\xi(G(x) \xi \cdot \xi) = 2 G(x) \xi, \hspace{0.5cm} \dot \xi(t) = - \nabla_{x} (G(x) \xi \cdot \xi) = - \sum_{1\leq i, j \leq 3} \nabla G_{i,j}(x) \xi_i \xi_j,
\end{equation}
with $(x(0), \xi(0)) = (x_0, \xi_0)$.
Such a non-trapping assumption and stronger repulsivity variant discussed momentarily, plays a crucial role in the two main ingredients used to show scattering: global Strichartz estimates, and non-concentration estimates on the nonlinear solutions. Concerning the former, global-in-time Strichartz estimates typically hold in non-trapping situations. Indeed, at least in the case without boundaries, semiclassical Strichartz estimates (i.e., on time intervals of size $\sim h$ for data localized in frequencies $\sim h^{-1}$) hold without geometric assumption due to the semiclassical finite speed of propagation \cite{BuGeTz04}. On the other hand, the non-trapping condition implies global-in-time smoothing (see e.g. \cite[\S2.3]{BuGeTz04b} for a proof from resolvent estimates, themselves tracing back to \cite{LaPh89, MeSj82, Va89, VaZw00, Bu02}), which can in turn be used to deduce global-in-time Strichartz estimates from semiclassical ones \cite{Iv10, StTa02} (see also \cite{Bu03}, \cite{BuGuHa10}). For the problem with boundaries, such global Strichartz estimates involve in general loss of derivatives, however, this difficulty can be overcome using the global smoothing close to the (non-trapping) obstacle \cite{PV1, PV2}. Non-concentration estimates, taking the form of Morawetz estimates such as originating from the work of Lin and Strauss \cite{LinStrauss}, are the second crucial ingredient in a typical proof of scattering. These can for example be used in their modern interaction form \cite{PV1, CoKeStTaTa04}, to show that the solution is in a global-in-time space-time Lebesgue space (which is then used, by interpolation with energy conservation arguments, to show that it is in a scale-invariant space-time Lebesgue space at the level we are interested in, which in turn implies scattering), or to rule-out compact flow solutions in a concentration-compactness/rigidity type argument by contradiction in the now classical Kenig-Merle scheme originating from \cite{keMe06}. Such Morawetz estimates rely on even stronger non-trapping conditions, namely \emph{repulsivity conditions}, to ensure that the term arising from the perturbation in the computation has the right sign. For an obstacle, this is the assumption that it is star-shaped, i.e. $x\cdot n(x) \leq 0$ at the boundary; for a potential, it typically takes the form $x\cdot \nabla V \geq 0$ (or $(x\cdot \nabla V)_-$ small).

In contrast, we are interested in the most general situation without any non-trapping assumption, hence with the Hamilton flow (\ref{eq:Ham}) associated with $-\Delta_G$ having possibly the strongest (i.e., the most stable) possible trapped trajectories, but we introduce a damping term $iau$, active where the trapping takes place, intending to understand how such a damping can mitigate the effects of trapping.  Hence the damping $a$ will verify the following control condition:
\begin{equation} \label{eq:control}
\operatorname{supp}(G- \I) \subset \big\{ a>0 \big\}.
\end{equation}
\begin{figure}
\begin{center}
\includegraphics[scale=0.8]{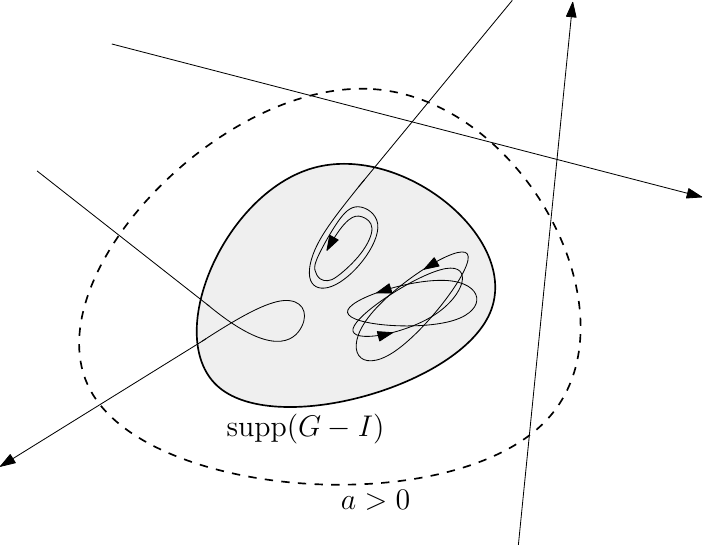}
\caption{Schematic representation of the control condition (\ref{eq:control}) and of typical Hamilton trajectories.} 
\end{center}
\end{figure}
The overall idea is that, at least at the $ L^2$-level, the damping should allow recovering \emph{some} global-in-time integrability in the region of trapping, where Morawetz-type arguments break down. On the other hand, numerous challenges are introduced \emph{both} by the trapping and the damping, namely:
\begin{enumerate}
\item\label{d:1} \emph{Loss of smoothing:} one of the most spectacular effects of the existence of stable trapped trajectories is that no smoothing effect, even with a loss of derivative, can be true for the associated linear Schrödinger operator (see e.g. \cite[Remark 4.2]{Bu04}). Adding a damping term can permit to recover some global-in-time integrability at the $L^2$-level for linear solutions $u_L$, in the form that $\Vert u_L \Vert_{L^2(\mathbb R_+, B(0, R))} \lesssim_R \Vert u_0 \Vert_{L^2(\R^3)}$ under the assumption that any trapped trajectory intersects $\{ a > 0 \}$ \cite{AlKh07}; however damping by a potential doesn't permit to recover any smoothing (see also \cite{AlKhRo17, KhRo17}).
\item\label{d:2} \emph{Loss in local-in-time Strichartz estimates:} Consequently, smoothing cannot be used to obtain even local-in-time scale-invariant Strichartz estimates from semiclassical estimates. Therefore, we have a priori no better than the same local-in-time Strichartz estimates than in a general manifold, which involves a loss of $\frac 1p$ derivatives \cite{BuGeTz04}. As a consequence, the local-well-posedness theory is a priori no better than in a general 3D manifold, for which the usual contraction principle  gives well-posedness of strong $C([0,T], H^s)$ solutions only for $s>1$. Recall however that \cite{BuGeTz04} managed to show global existence and uniqueness of weak $H^1$ solutions in a bounded manifold, but we lack the appropriate compactness tools to replicate their argument in our unbounded setting.
\item\label{d:3} \emph{No a priori uniform-in-time bound on the energy:} whereas the mass is non-increasing thanks to the damping, the derivative of the energy is a priori not signed, hence no a priori uniform-in-time bound on the energy is at hand.
\item\label{d:4} \emph{Negative times:} Finally, let us mention that the linear operator is not bounded uniformly in negatives times in $L^2(\R^3)$, hence it makes the use of the Duhamel formula involving the retarded evolution to show scattering from space-time Lebesgue bounds challenging.
\end{enumerate}

We are now ready to introduce our main result. It shows that (\ref{eq:DNLS}) is locally well-posed in $H^{1+\epsilon}$, and under the control condition (\ref{eq:control}), solutions are uniformly bounded forward in time in the energy space, and scatter with a loss of one-half derivative with respect to the free case. This constitutes the first result of scattering \emph{both} under strong geometrical trapping, and for non-constant damping, for any nonlinear dispersive equation.
\begin{thm}\label{thm:ScatDef} 
For any $\epsilon>0$, equation (\ref{eq:DNLS}) is locally well-posed in $H^{1+\epsilon}(\mathbb R^3)$. In addition, under the control condition (\ref{eq:control}), for any $u_0 \in H^{1+\epsilon}(\mathbb R^3)$, the associated forward maximal solution is global in positive times, verifies $\Vert u \Vert_{L^\infty(\mathbb R_+, H^1(\mathbb R^3))} \leq C \Vert u_0 \Vert_{H^1(\mathbb R^3)}$, and scatters in $H^{1-}(\R^3)$ to a free linear wave, in the sense that there exists $u_+ \in H^{1}(\mathbb R^3)$ so that
$$
\forall s\in[0, 1), \hspace{0.3cm}\text{ }\Vert u(t) - e^{it\Delta} u_+ \Vert_{H^{s}(\mathbb R^3)} \to 0 \hspace{0.3cm} \text{ as } \hspace{0.3cm} t \to + \infty.
$$
\end{thm}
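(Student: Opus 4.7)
The plan is to proceed in three stages. For local well-posedness in $H^{1+\epsilon}(\mathbb R^3)$, I would invoke the local-in-time Strichartz estimates with $1/p$-loss of Burq-Gérard-Tzvetkov, which apply to $-\Delta_G$ on $\mathbb R^3$ since $G-\I$ is compactly supported and one has semiclassical finite speed of propagation. A contraction-mapping argument in a Strichartz space at regularity $1+\epsilon$ handles the cubic nonlinearity --- the extra $\epsilon>0$ exactly compensating the $\frac{1}{p}$-loss as in the general 3D manifold theory --- while the damping $iau$ enters as a zero-order, uniformly bounded, linear perturbation. This yields a maximal forward solution on $[0,T^*)$ with the usual blowup alternative for $\|u(t)\|_{H^{1+\epsilon}}$.

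The heart of the argument is the uniform-in-time $H^1$-bound. A direct computation yields the energy identity
\begin{equation*}
\frac{d}{dt}E[u(t)] + \int a\, G\nabla u\cdot\nabla\bar u\, dx + \int a|u|^4\, dx \;=\; \tfrac{1}{2}\int(\Delta_G a)|u|^2\, dx,
\end{equation*}
alongside the dissipation $\frac{d}{dt}M[u(t)] = -2\int a|u|^2$. The right-hand side is an unsigned ``bad'' term, but is controlled by $\|u(t)\|_{L^2(K)}^2$ for the compact set $K=\operatorname{supp}(\Delta_G a)$. To promote this to a uniform bound I would extend the linear observability of Alexandrova-Khenissi to the nonlinear setting: applying their estimate $\int_0^\infty \|v_L\|_{L^2(K)}^2\, dt \lesssim \|v_0\|_{L^2}^2$ to $u$ seen as a solution of the inhomogeneous damped Schrödinger equation with source $-i|u|^2 u$ (via Duhamel), combined with dual Strichartz at a regularity slightly below $H^1$, gives
\begin{equation*}
\int_0^T \|u\|_{L^2(K)}^2\, dt \;\lesssim\; \|u_0\|_{L^2}^2 + F\bigl(\sup\nolimits_{[0,T]}\|u(t)\|_{H^1}\bigr),
\end{equation*}
with $F$ superlinear yet manageable. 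A bootstrap on $[0,T]$ then closes the uniform $H^1$-bound independently of $T$ and, via persistence of regularity for $H^{1+\epsilon}$, precludes blowup, yielding global existence.

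With uniform $H^1$ in hand, the integrated energy identity delivers the damping-side integrability $\int_0^\infty \int a|u|^4\, dx\, dt \lesssim \|u_0\|_{H^1}^2$. Outside $\{a>0\}\supset\operatorname{supp}(G-\I)$, the equation reduces to the free defocusing cubic NLS on $\mathbb R^3$, and a localized (interaction-)Morawetz estimate in this region furnishes the complementary space-time integrability; gluing the two with a suitable cut-off yields a global $L^q_tL^p_x$-bound at a scaling slightly coarser than the critical one. Feeding this into the forward-in-time Duhamel formula $u(t) - U(t)u_+ = -i\int_t^\infty U(t-s)|u|^2 u\, ds$, where $U(t)$ is the damped variable-coefficient linear propagator, I would show that $\{e^{-it\Delta}u(t)\}_{t\to+\infty}$ is Cauchy in $H^s$ for every $s\in[0,1)$; the transition from $U$ to the free Schrödinger propagator $e^{it\Delta}$ is then handled by the compact support of the perturbations and local energy decay. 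Scattering at $s=1$ fails by precisely the loss of smoothing associated with the trapped trajectories, consistently with difficulty (\ref{d:1}).

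The main obstacle will be the uniform $H^1$-bound --- specifically, extending Alexandrova-Khenissi observability to the nonlinear problem. Because trapping rules out any global smoothing effect, the cubic source $|u|^2 u$ cannot be placed in a natural dual-Strichartz space at full $H^1$-regularity, forcing a delicate interpolation at a slightly lower Sobolev index and a careful bootstrap to avoid circular reasoning between the a priori $H^1$-bound and the space-time estimates that are supposed to furnish it. The negative-time non-uniform boundedness of the linear propagator (difficulty (\ref{d:4})) is a secondary but persistent nuisance that constrains every Duhamel argument to be strictly one-sided in time.
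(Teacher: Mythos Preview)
Your local well-posedness outline is fine and matches the paper. The two substantive steps, however, have genuine gaps.

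\textbf{Uniform $H^1$-bound.} Your plan is to extend the linear observability of Aloui--Khenissi to the nonlinear equation by treating $|u|^2u$ as a Duhamel source and placing it in a dual Strichartz space ``slightly below $H^1$'', then closing a bootstrap of the form $\sup_{[0,T]}E \lesssim E_0 + F(\sup_{[0,T]}E)$ with $F$ superlinear. This does not close for large data: with only the lossy Strichartz estimates (no smoothing, no scale-invariant dual estimate) you cannot put $|u|^2u$ in a space that yields a \emph{sublinear} contribution to $\int_0^T\|u\|_{L^2(K)}^2$, and a superlinear $F$ gives no uniform bound without smallness. The paper avoids this entirely. It performs a Morawetz computation with weight $\langle x\rangle$, perturbatively in $G-\I$, so that the linear part produces exactly the localized mass $\lambda(t):=\int_0^t\int\langle x\rangle^{-7}|u|^2$. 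The energy law gives $E[u(t)]\le E_0+C_0\lambda(t)$; the virial $V(t)=\operatorname{Im}\int\bar u\nabla u\cdot\nabla\langle x\rangle$ satisfies both $|V(t)|\lesssim M_0^{1/2}(E_0+\lambda(t))^{1/2}$ and $V(t)\ge \tfrac12\lambda(t)-C(M_0+E_0)$ after the error terms on $\operatorname{supp}(G-\I)$ are absorbed via the control condition and (\ref{eq:Cepsi}). This yields a \emph{quadratic} inequality $\lambda^2-b\lambda-c\le0$, hence $\sup_t\lambda<\infty$, and with it the uniform energy bound and local energy decay simultaneously. No observability input and no bootstrap are needed.

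\textbf{Scattering.} Your Duhamel formula $u(t)-U(t)u_+=-i\int_t^\infty U(t-s)|u|^2u\,ds$ involves the damped propagator at negative times $t-s<0$, where it is \emph{not} uniformly bounded in $L^2$ --- precisely difficulty (\ref{d:4}), which you flag as a nuisance but in fact obstructs this route. The subsequent ``transition from $U$ to $e^{it\Delta}$ via compact support and local energy decay'' is too vague to be a strategy. The paper instead cuts $u=\chi u+(1-\chi)u$ with $\chi=1$ on $\operatorname{supp}a$. Then $\chi u\to0$ in $H^s$, $s<1$, by local energy decay, while $w:=(1-\chi)u$ solves $i\partial_t w-\Delta w=[\Delta,\chi]u-(1-\chi)|u|^2u$ with the \emph{free} Laplacian, so Duhamel uses $e^{it\Delta}$ only. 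The commutator source is handled by the dual of Kato smoothing for $e^{it\Delta}$ together with local energy decay; the cubic source by the $L^4_tL^4_x$ bound from the perturbative interaction Morawetz and the embedding $W^{1,1}\hookrightarrow W^{1/2,6/5}$ feeding the endpoint dual Strichartz. This gives scattering in $H^{1/2}$, upgraded to $H^{1-}$ by interpolation with the uniform $H^1$ bound.
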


The fact that the well-posedness holds in $H^{1+\epsilon}$ is related to (\ref{d:2}) above.
 On the other hand, we see the inability to reach scattering in $H^1$ as related to the loss of smoothing in the linear flow (\ref{d:1}) due to trapping --
in particular, if $G$ induces a non-trapping Hamiltonian flow, both well-posedness and scattering hold in $H^1$, as discuted at the end of this introduction and shown in Appendix \ref{app:non_trap}. 

We now give some ideas of the proof. The first preliminary step, carried out in \S\ref{secPrelim}, is to show local well-posedness together with a conditional global-well-posedness result. Strichartz estimates for the undamped linear flow from \cite{BuGeTz04} give local well-posedness in $H^{1+\epsilon}$. We will show in a later key step that solutions are uniformly bounded forward in time in the energy space, but for $H^{1+\epsilon}$ well-posed solutions,
this is a-priori not enough to obtain global existence. However, from an adaptation of the argument used in \cite{BuGeTz04} to show the existence of a unique global weak $H^1$ solution, we can show that a uniform bound on the energy implies global forward existence of $H^{1+\epsilon}$ well-posed solutions.

Scattering in $H^{1-}$ will be obtained by interpolation from scattering in $H^{\frac 12}$. Scattering in $H^{\frac 12}$ in turn will be a consequence of a local energy decay estimate (without gain of regularity) for solutions of (\ref{eq:DNLS}) once forward global well-posedness is established, namely, we will show that:
\begin{equation} \label{eq:intro_energy_decay}
\Vert u \Vert_{L^2(\mathbb R_+, H^1(B(0, R)))} \lesssim_R \Vert u_0 \Vert_{H^1(\mathbb R^3)}.
\end{equation}
We show both the above and a uniform bound on the energy, hence global well-posedness, in one go. This will be the main result of \S\ref{s:nrj}. The idea is the following. 
We first remark that, from the energy law,  
\begin{equation} \label{eq:intro_energy_bound}
    E[u(t)] \lesssim E[u(0)] + \Vert u \Vert_{L^2([0, t], L^2(B(0, R)))},
\end{equation}
for $R>0$ large enough, hence it suffices to show decay of the local mass, $\Vert u \Vert_{L^2(\mathbb R_+, L^2(B(0, R)))}<\infty$, to obtain a uniform energy bound. In order to do so, we use a Morawetz-type estimate, that makes precisely appear the localized mass as the term coming from the linear part of the equation. More precisely, we perform a Morawetz  computation (on the maximal time of existence) with linear weight $\langle x \rangle$, perturbatively with respect to $G = \I$, hence with an unsigned error term localized on $\operatorname{supp}(G-\I)$. The localized mass term arising in the computation is then 
$$
\lambda(t) := \int_0^t \int \langle x\rangle^{-7}|u|^2 dxdt.
$$
We control the energy by $\lambda$, and, thanks to the energy law and the control condition, we are able to control the error terms \emph{up to lower order terms in $\lambda$}. We show in such a way that $\lambda$ verifies an inequality of the form
$$
\lambda(t)^2 - b\lambda(t) - c \leq 0,
$$
and is therefore bounded. The uniform energy bound and global existence follow, and coming back to the Morawetz computation and plugging the information that $\sup_t \lambda(t) < \infty$ we obtain local energy decay (\ref{eq:intro_energy_decay}).

Once (\ref{eq:intro_energy_bound}) is established, the second ingredient is a bilinear (or interaction) Morawetz estimate in the spirit of \cite{PV1, CoKeStTaTa04}, showing that solutions to (\ref{eq:DNLS}) are in $L^4(\mathbb R_+, L^4(\R^3))$, obtained in \S\ref{s:bil}. The idea is to perform the computation perturbatively with respect to $G= \I$, hence with error terms again localized in $\operatorname{supp}(G-I)$. But as  $\operatorname{supp}(G-I)$ is compact, we can control these terms thanks to the local energy decay (\ref{eq:intro_energy_bound}), and the global space-time bound follows. 

To conclude from a global space-time bound, one typically takes advantage of the Duhamel formula and global Strichartz estimates to construct $u_+$. However, in our setting, this direct approach is hopeless, as it would require global estimates backward in time for the linear group. One cannot consider the damping $iau$ as a perturbation and put it in the source term either, as the linear group would now be $e^{it\Delta_G}$, for which no global estimate is at hand due to trapping. To overcome these difficulties, we cut the solution in two parts: a part $\chi u$ localized close to $\operatorname{supp}a$, and a part $(1-\chi) u$  away from it. From local energy decay, one can show that $\chi u \to 0$ in any $H^s(\R^3)$, $s<1$. On the other hand, $(1-\chi) u$ verifies a nonlinear Schr\"odinger equation with the free Laplacian $-\Delta$, and one can show scattering in $H^{\frac 12}$ of this part by expressing it thanks to Duhamel with the free linear group $e^{it\Delta}$. This involves control of the nonlinearity $u|u|^2$, which comes from the $L^4 L^4$-bound, and control of the commutant $[\Delta, \chi]u$ arising in the equation, which follows thanks to local energy decay. Once scattering in $H^{\frac 12}$ is established, it can be improved to $H^{1-}$ by interpolation with the uniform bound on energy; and the proof is completed. This final step is carried out in \S\ref{secEnd}.

Remark that scattering of the part $(1-\chi)u$ cannot be improved, before interpolation, beyond $H^{\frac 12}(\R^3)$ with our method, as it would require a bound on $[\Delta, \chi]u$ at a higher level of regularity, which is not at hand. 
One cannot, either, improve the result a posteriori by cutting again the solution in the spirit of \cite{PV1}, as this approach requires local smoothing for the linear group, which does not hold in our case due to trapping. This is the reason why we relate the inability to reach $H^1$ in our scattering result to the loss of smoothing. 

To conclude this introduction, we now discuss other related problems, generalizations presented in Appendices, and open questions. Observe that the problem we are interested in can be seen as related to the stabilization and controllability of the equation. Stabilization is typically expected under the geometric control condition, stating that every Hamiltonian trajectory intersects the control region $\{ a > 0 \}$. In unbounded settings, stabilization results therefore typically assume that the control is active at infinity:  $\{ a > 0 \}\subset \mathbb R^d \backslash B(0, R)$, $R\gg1$. We refer for example to \cite{DeGeLe06, RoZh10, Lau10,  CaCoWe20, YaNiChe21, BaMa23, stab24}, and references therein, for the reader interested in the stabilization problem. In contrast, in our case (\ref{eq:control}), the control is active only locally near the perturbation: it constitutes the simplest example of an \emph{exterior} control condition, stating that any trajectory either go to infinity, or intersects the control region.
The case of a constant damping $a\equiv1$ (with constant coefficients $G=\I$), enjoys the best of both worlds, and, by conjugating with an exponential, one can show exponential scattering in $H^1$ \cite{Ta19}.

Remark that our result is new even in the case $G = \I$. In this case, and actually, under a general mild trapping assumption (i.e. for $G$  such that $e^{it\Delta_G}$ verifies global, scale-invariant Strichartz estimates and a local-energy decay estimate, see Assumption \ref{ass:mid_trap}) which holds in particular  in the non-trapping case, we can show scattering up to $H^1(\mathbb R^3)$ -- still under the control condition (\ref{eq:control}). This is done in Appendix \ref{app:non_trap}.

Finally, observe that the nonlinear parts of our arguments work just as well in the case of the equation posed outside an obstacle. However, the best general local Strichartz estimates (i.e., without any geometric conditions) for the linear Schr\"odinger equation in a setting with boundaries known at the moment are not enough to obtain a well-posedness theory such as presented in \S\ref{secPrelim}, hence the result obtained is conditional. This is illustrated in Appendix \ref{app:obst}.

We finish this introduction by stating some open questions related to our problem. First, we don't know weather scattering in $H^{1-}$ is sharp. Going to $H^1$, or giving a counter-example, would likely involve a better understanding of the related \emph{linear} damped flow. In particular, could one show better small time Strichartz estimates for the damped flow than for the undamped one, under the exterior control condition, or is there a counter-example? Related to this question, could one show that the loss of smoothing 
 for the damped flow in the results of \cite{AlKh07,AlKhRo17, KhRo17} is sharp? This is expected, but no counter-example seems to be known. Coming back to the non-linear equation, it is natural to expect an analogous result, under a natural mass-energy threshold, for the focusing equation. Showing such a result would likely involve profile decompositions in the spirit of \cite{keMe06}, which will be challenging to carry out in our setting, in particular due to the non-self-adjointness of the problem, and the fact that the linear operator is not bounded in negative times. Finally, observe that even in the non-trapping case, without damping, all the known scattering results involve a stronger repulsivity assumption mentioned earlier. In particular, it is natural to expect scattering for the undamped equation in any non-trapping geometry, but this is not known -- the restriction to repulsive geometries is mostly due to the rigidity of the Morawetz computations in its various avatars used to show scattering. In the spirit of this paper, we expect an even stronger conjecture to hold: scattering (at least in $H^{1-}$) under the sole exterior control condition (any trajectory either meets the control condition or goes to infinity). This last question is however still far from reach.

\subsection*{Notations}
We write $a \lesssim b$ to indicate that there exists a universal constant $C>0$ such that $a \leq Cb$. We define $H^{1-}(\mathbb R^3)$ as $H^{1-}(\mathbb R^3) := \cap_{0\leq s<1} H^s(\mathbb R^3)$ with $H^0(\mathbb R^3) := L^2(\mathbb R^3)$.

\section{Preliminaries}\label{secPrelim}

\subsection{Strichartz Estimates for the undamped linear Schrödinger flow}

We will say that the couple $(p,q)$ is admissible whenever
\begin{equation}\label{eq:StrichS}
2\leq p, q \leq \infty , \hspace{0.3cm} \frac 2 p + \frac 3 q = \frac 3 2 .
\end{equation}
From \cite{BuGeTz04}, we have local-in-time Strichartz estimates for the \emph{undamped} linear flow $e^{ i t \Delta_G}$ with a loss of $\frac 1p$ derivatives:
\begin{prop} \label{prop:strich}
For any $T >0$, any $s \geq 0$ and any admissible couple $(p,q)$, there exists $C>0$ such that, for any $u_0 \in H^{\frac 1p + s}(\R^3)$ and any 
$f \in L^1((0,T), H^{\frac 1p + s}(\mathbb R^3))$,
\begin{equation}\label{eq:StrLossHom}
\| e^{ i t \Delta_G} u_0 \|_{L^p((0,T),W^{s,q}(\R^3))} \leq C \| u_0 \|_{H^{ \frac 1p + s}(\R^3)},
\end{equation}
and
\begin{equation}\label{eq:NonHomDG}
    \left\| \int_0^t e^{i(t- \tau) \Delta_G} f \right\|_{L^p((0,T),W^{s,q}(\R^3))} \leq C \| f \|_{L^1((0,T), H^{\frac 1p + s}(\R^3))}.
\end{equation}
\end{prop}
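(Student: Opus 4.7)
The plan is to reduce to a semiclassical (frequency-localized) Strichartz estimate, following the Burq--Gérard--Tzvetkov strategy. First, I would perform a Littlewood--Paley decomposition associated with $-\Delta_G$: fix a dyadic partition $1 = \varphi_0(\lambda) + \sum_{k\geq 1} \psi(2^{-k}\lambda)$ with $\psi \in C_c^\infty((1/2,2))$, and set $u_k := \psi(2^{-k}\sqrt{-\Delta_G}) u_0$. For $k=0$, Sobolev embedding and the $L^2$-conservation of $e^{it\Delta_G}$ give the bound trivially. For $k\geq 1$, write $h = 2^{-k}$, so that $u_k$ is spectrally localized in $\{|\xi|\sim h^{-1}\}$. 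The goal reduces to proving the frequency-localized estimate
\begin{equation*}
\bigl\| e^{it\Delta_G} u_k \bigr\|_{L^p((0,T), L^q(\R^3))} \lesssim h^{-1/p} \| u_k \|_{L^2(\R^3)},
\end{equation*}
since then Minkowski in time (for $p\geq 2$), square-function characterization of $W^{s,q}$, and summation over $k$ yield (\ref{eq:StrLossHom}) with the stated $1/p$ loss after absorbing $h^{-1/p} = 2^{k/p}$ into the Sobolev exponent.

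The core step is a semiclassical Strichartz estimate on a time-slab of length $h$: for $u_k$ localized at frequency $h^{-1}$,
\begin{equation*}
\bigl\| e^{it\Delta_G} u_k \bigr\|_{L^p(I_h, L^q(\R^3))} \lesssim \| u_k \|_{L^2(\R^3)}, \qquad |I_h| \leq h.
\end{equation*}
This is obtained via a WKB/FIO parametrix for $e^{it\Delta_G}$ on the semiclassical time scale $|t| \lesssim h$, where the principal symbol $G(x)\xi\cdot\xi$ generates a smooth Hamilton flow without caustics; a stationary phase argument gives the dispersive bound $\|e^{it\Delta_G} u_k\|_{L^\infty} \lesssim |t|^{-3/2} \|u_k\|_{L^1}$ on this slab, and the classical Keel--Tao $TT^*$ abstract machinery then produces the full admissible range (\ref{eq:StrichS}). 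This parametrix construction is precisely the heart of \cite{BuGeTz04} and is the main technical obstacle; it works without any geometric assumption on the global Hamilton flow because the construction is purely local-in-time.

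To pass from a slab of length $h$ to a fixed interval $(0,T)$, I would partition $(0,T)$ into $N \sim T/h$ intervals $I_j$ of size $h$, apply the semiclassical estimate on each with initial data $e^{it_j \Delta_G} u_k$, use the $L^2$-boundedness of the flow, and sum in $\ell^p$:
\begin{equation*}
\bigl\| e^{it\Delta_G} u_k \bigr\|_{L^p((0,T), L^q)}^p = \sum_{j=1}^{N} \bigl\| e^{it\Delta_G} u_k \bigr\|_{L^p(I_j, L^q)}^p \lesssim N \| u_k \|_{L^2}^p \lesssim T h^{-1} \| u_k \|_{L^2}^p,
\end{equation*}
so that extracting the $p$-th root and writing $h^{-1/p} = 2^{k/p}$ yields exactly a $1/p$-derivative loss. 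Then reassembling via Littlewood--Paley and using the Mikhlin-type characterization of $W^{s,q}$ adapted to $-\Delta_G$ gives (\ref{eq:StrLossHom}).

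For the inhomogeneous estimate (\ref{eq:NonHomDG}), rather than appealing to the standard Keel--Tao duality (which gives only admissible-admissible pairs), I would invoke the Christ--Kiselev lemma: since $p > 1$ for admissible $(p,q) \neq (\infty,\infty)$, it allows upgrading the $L^1_t \to L^p_t$ estimate obtained from (\ref{eq:StrLossHom}) by Minkowski's integral inequality applied to the Duhamel formula
\begin{equation*}
\int_0^T e^{i(t-\tau)\Delta_G} f(\tau)\, d\tau,
\end{equation*}
to the retarded operator with integration up to $t$. The endpoint $p = 2$, $q = 6$ requires a separate argument of Keel--Tao type but causes no additional issue at the semiclassical scale. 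Combining everything completes the proof.
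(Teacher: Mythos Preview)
Your proposal is correct and follows essentially the same Burq--G\'erard--Tzvetkov strategy that the paper invokes: semiclassical WKB parametrix giving an $L^1\to L^\infty$ dispersive bound on time-slabs of length $\sim h$, Keel--Tao on each slab, then summing $\sim T/h$ slabs to pick up the $h^{-1/p}$ loss, and Littlewood--Paley reassembly. Two minor remarks: in the $\mathbb R^3$ setting the paper (following \cite[Appendix~A.1]{BuGeTz04}) performs the frequency localization with the Fourier multiplier $\Psi(hD)$ rather than the spectral projector $\psi(-h^2\Delta_G)$, which sidesteps the need for an $L^q$ square-function theory adapted to $-\Delta_G$; and for the inhomogeneous bound (\ref{eq:NonHomDG}) with $L^1_t$ source, Minkowski's integral inequality already handles the retarded Duhamel operator directly, so Christ--Kiselev is not actually needed there.
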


\begin{proof}
    These estimates are shown in \cite[Theorem 1]{BuGeTz04}. Albeit \cite[Theorem 1]{BuGeTz04} is stated for the Laplace-Beltrami operator $-\Delta_g$ in a compact manifold $(\mathcal M, g)$, the proof applies equally well in our setting -- note in particular that a generalization to the Laplace operator associated to a Riemmanian metric in 
    $\mathbb R^d$ is given as \cite[Appendix A.1]{BuGeTz04}, which is very close to our setting. 
    
    We briefly sketch the proof of \cite[Theorem 1]{BuGeTz04} for the interested reader, then explain the (minimal) changes in our setting. The authors work for frequency-localized data $\psi(-h^2 \Delta_g) u_0$, and show, thanks to a WKB approximation on the kernel of the associated operator, that the semiclassical Schr\"odinger flow verifies the dispersive estimate $|e^{it\Delta_g}\psi(-h^2 \Delta_g) u_0 (x)| \lesssim t^{-\frac d2}\Vert u_0 \Vert_{L^1}$ in times $\lesssim h$ -- this is the key \cite[Lemma 2.5]{BuGeTz04}. By the now classical \cite{KeTa98}, this implies Strichartz estimates for $e^{it\Delta_g}\psi(-h^2 \Delta_g)$ (including endpoint) in time intervals of size $\sim h$, and hence for $e^{it\Delta_g}\psi(-h^2 \Delta_g)$ in time intervals of size $\sim 1$ but with a loss $h^{-1/p}$, which corresponds to the derivative loss $H^{\frac 1p}$ after re-summation of all frequencies.
    
    The proof of \cite[Theorem 1]{BuGeTz04} therefore uses two main ingredients: a frequency localization $\psi(-h^2 \Delta_g)$ and its properties when acting on spaces $L^p$, and a WKB construction used to show a dispersive estimate $L^1\to L^\infty$ in times $\sim 1$ for the semiclassical Schr\"odinger
    flow $e^{ith\Delta_g}$. 
    
    In the case of the Laplace operator associated to a Riemmanian metric in 
    $\mathbb R^d$ presented in \cite[Appendix A.1]{BuGeTz04}, the main difference is the different status of the $L^p$ spaces in $\mathbb R^d$: for this reason, the authors show the dispersive estimate for a localization in frequencies $\Psi(hD)$, that is, $|e^{it\Delta_g}\Psi(hD) u_0 (x)| \lesssim t^{-\frac d2}\Vert u_0 \Vert_{L^1}$ in times $\lesssim h$ -- this is \cite[Lemma A.3]{BuGeTz04}, the analogue of the key \cite[Lemma 2.5]{BuGeTz04}. The localization $\Psi(hD)$ is a good approximation to $\psi(-h^2 \Delta_g)$ by \cite[Proposition A.1, Corollary A.2]{BuGeTz04}, and the Strichartz estimates follow as in the compact case thanks to this supplementary ingredient. The proof of the dispersive estimate \cite[Lemma A.3]{BuGeTz04} itself is almost verbatim the same as in the compact case, to the difference that the solutions to the eikonal and transport equations are now defined globally in space for small times.
    
    In our case, the Strichartz estimates follow almost verbatim as in \cite[Appendix A.1]{BuGeTz04} recapped above. Indeed, $-\Delta_G$ satisfies the assumptions of \cite[Proposition A.1]{BuGeTz04}, hence \cite[Proposition A.1, Corollary A.2]{BuGeTz04} apply. On the other hand, the same WKB method applies (without the need to work in coordinates patch)
    in the  same way to show the analogue of \cite[Lemma 2.5]{BuGeTz04}, 
    by solving the eikonal and transport equations that now write
    $$
    \partial_s \phi + G\nabla \phi \cdot \nabla \phi = 0,
    $$
    $$
    \partial_s a_0 + 2 G \nabla \phi \cdot \nabla a_0 + \Delta_G (\phi) a_0 = 0,
    $$
    $$
    \partial_s a_j+ 2 G \nabla \phi \cdot \nabla a_j + \Delta_G (\phi) a_j = - \Delta_G (a_{j-1}) \hspace{0.5cm}j\geq 1,
    $$
    which are the analogue to \cite[(2.19)-(2.20)-(2.21)]{BuGeTz04},
    and the end of the proof follows as at the end of \cite[Appendix A.1]{BuGeTz04}.
\end{proof}

\begin{remark}[Global Strichartz estimates with dramatic loss for the damped flow] \label{rk:dram}
    Recall that, in the non-trapping case, the frequency-localized Strichartz estimates of \cite{BuGeTz04} can be combined with local smoothing to obtain global Strichartz estimates without loss of derivative \cite{Iv10, StTa02}. This strategy doesn't apply in our case as because of trapping, local smoothing is lost. 
    For the damped linear flow however (which we will denote $S(t)$ for the sake of this Remark), even if smoothing is not at hand, the damping term permits to recover some global-in-time integrability at the $L^2$-level: $\Vert S(t) u_0 \Vert_{L^2(\mathbb R_+, B(0, R))} \lesssim_R \Vert u_0 \Vert_{L^2(\R^3)}$ under the assumption that any trapped trajectory intersects $\{ a > 0 \}$ \cite{AlKh07}. This can be combined with local Strichartz estimates for $S(t)$ (which hold from Proposition \ref{prop:strich} by absorption by viewing  $iau$ as a source term) to obtain global Strichartz estimates for the damped linear flow $S(t)$ with a loss of $\frac 1p + \frac 12$ derivatives:
    $$
\Vert S(t) u_0 \Vert_{L^p(\R_+, L^q(\R^3))} \lesssim \Vert u_0 \Vert_{H^{\frac 1p + \frac 12}(\R ^3)},
    $$
    such a loss however seems too dramatic to be useful for our purposes, and we don't use such estimates in this paper.
\end{remark}
\subsection{Local, and conditionally global, well-posedness theory}
We start by showing local well-posedness of solutions in $H^{1+\eps}(\R^3)$ for any $\eps >0$ by the classical contraction principle. 
\begin{prop}\label{prpLocExsCubic} For any $\eps >0$, the followings holds.
    There exists $p>2$ so that for any $u_0 \in H^{1+\eps} (\R^3)$ there exists $T>0$ and a unique solution  $u \in C([0,T],H^{1 + \eps}(\R^3)) \cap L^p([0, T], L^\infty(\R^3))$ to \eqref{eq:DNLS}. In addition
    \begin{enumerate}
    \item If $\Vert u_0 \Vert_{H^{1+\eps}(\R^3)}$ is bounded, then $T$ is bounded below.
    \item The map $u_0 \in H^{1+\eps}(\R^3) \mapsto u \in C([0,T],H^{1+\eps}(\R^3))$ is continuous.
    \item If $u_0 \in H^m(\R^3)$ for $m>{1+\eps}$, then $u \in C([0, T], H^m(\R^3))$.  
    \end{enumerate}
    In particular, from (1), if $T_{\rm max} >0$ denotes the maximal forward time of existence, either $T_{\rm max} = +\infty$ or
    \begin{equation}\label{eqBUAlt}
        \lim_{t \to T_{\rm max}^-}\|  u (t) \|_{H^{{1+\eps}}(\R^3)} = \infty.
    \end{equation}
\end{prop}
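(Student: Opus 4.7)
The plan is the classical contraction mapping argument for the Duhamel map
\[
\Phi(u)(t) := e^{it\Delta_G} u_0 - i \int_0^t e^{i(t-\tau)\Delta_G} \bigl( |u|^2 u - i a u \bigr)(\tau)\, d\tau,
\]
in the Strichartz spaces of Proposition~\ref{prop:strich}. The key preliminary is to pick a single admissible pair $(p,q)$ with $p$ slightly larger than $2$ and such that $s := 1+\eps - \tfrac{1}{p}$ lies above the $L^\infty$-Sobolev threshold $\tfrac{3}{q}$; using admissibility $\tfrac{2}{p}+\tfrac{3}{q}=\tfrac{3}{2}$, this reduces to $\eps + \tfrac{1}{p} > \tfrac{1}{2}$, which admits solutions with $p>2$ for any $\eps>0$. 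Sobolev embedding then gives $W^{s,q}(\R^3) \hookrightarrow L^\infty(\R^3)$, so $L^p_t W^{s,q}_x$ controls $L^p_t L^\infty_x$. I would set the norm $\Vert u \Vert_{Y_T} := \Vert u \Vert_{L^\infty_T H^{1+\eps}} + \Vert u \Vert_{L^p_T W^{s,q}}$ and run the fixed point in the ball $X_{T,M} := \{ u \in C([0,T], H^{1+\eps}) \cap L^p([0,T], W^{s,q}) : \Vert u \Vert_{Y_T} \leq M \}$, endowed with the weaker distance $d(u,v) := \Vert u - v \Vert_{L^\infty_T L^2} + \Vert u - v \Vert_{L^p_T L^q}$; $X_{T,M}$ is complete for $d$ by a standard weak-$\ast$ compactness argument.

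The stability and contraction rest on: (i) the Kato--Ponce estimate $\Vert |u|^2 u \Vert_{H^{1+\eps}} \lesssim \Vert u \Vert_{L^\infty}^2 \Vert u \Vert_{H^{1+\eps}}$, combined with H\"older in time yielding a factor $T^{1-2/p}>0$ since $p>2$; (ii) the trivial bound $\Vert a u \Vert_{H^{1+\eps}} \lesssim \Vert u \Vert_{H^{1+\eps}}$ for the smooth compactly supported damping, giving a factor $T$; and (iii) the non-homogeneous Strichartz estimate of Proposition~\ref{prop:strich} applied at regularity $s$ with pair $(p,q)$, and at regularity $1+\eps$ with the trivial pair $(\infty,2)$. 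Together these yield
\[
\Vert \Phi(u) \Vert_{Y_T} \leq C \Vert u_0 \Vert_{H^{1+\eps}} + C \bigl( T + T^{1-2/p} \Vert u \Vert_{Y_T}^2 \bigr) \Vert u \Vert_{Y_T},
\]
so that choosing $M \sim \Vert u_0 \Vert_{H^{1+\eps}}$ and then $T$ small makes $\Phi$ stabilize $X_{T,M}$. The pointwise identity $|u|^2 u - |v|^2 v = O\bigl( |u|^2+|v|^2 \bigr)(u-v)$ gives an analogous contraction estimate in the weak distance $d$, and Banach's theorem produces the unique solution.

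Conclusions (1)--(3) and the blow-up alternative \eqref{eqBUAlt} are then standard corollaries: the existence time $T$ depends only on $\Vert u_0 \Vert_{H^{1+\eps}}$, giving (1) and \eqref{eqBUAlt}; the difference estimate used in the contraction combined with persistence in $L^\infty_T H^{1+\eps}$ gives flow continuity in $d$, upgraded to $H^{1+\eps}$-continuity by a Bona--Smith-type approximation argument, giving (2); and running the same fixed point at regularity $m > 1+\eps$ yields an $H^m$-solution on some a priori shorter interval, whose time of existence in fact depends only on $\Vert u \Vert_{L^\infty_T H^{1+\eps}}$ (the Kato--Ponce cubic term being controlled via the same $L^p_T L^\infty$-norm), so that it persists up to the $H^{1+\eps}$-maximal time, giving (3). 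The only real technical delicacy is that the window $\eps + 1/p > 1/2$ with $p > 2$ collapses as $\eps \to 0$, which is precisely why the $\tfrac{1}{p}$-derivative loss of \cite{BuGeTz04} prevents this contraction scheme from closing in $H^1$ --- the difficulty already highlighted in the introduction.
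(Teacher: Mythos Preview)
Your proposal is correct and follows essentially the same contraction argument as the paper, with the same choice of admissible pair $(p,q)$ satisfying $\eps+\tfrac1p>\tfrac12$ and the same Strichartz/Kato--Ponce ingredients. The only difference is that you run the contraction in a weak distance $d$ and then upgrade continuity via a Bona--Smith approximation, whereas the paper contracts directly in the strong $X_T$-norm (applying the fractional Leibniz rule to the difference as well), which yields (2) immediately without the extra approximation step.
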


\begin{proof}
The proof is similar to \cite[Proposition 3.1]{BuGeTz04} dealing with $iau$ as an additional source term.
We fix $\eps>0$ and any $(p,q)$ satisfying $p,q>2$ and
\begin{equation}\label{eqPQEq}
    s := 1 + \eps > \frac 32 - \frac 1p, \quad \quad \frac 2p + \frac 3q = \frac 32.
\end{equation}
Observe that \eqref{eqPQEq} implies that
\begin{equation*}
    \sigma := s - \frac 1p> \frac 3q,
\end{equation*}
which yields the Sobolev embedding 
\begin{equation} \label{eq:sob_lwp}
W^{\sigma, q}(\R^3) \hookrightarrow L^\infty(\R^3).
\end{equation} 
We define
\begin{equation*}
     X_{T} := L^\infty((0,T), H^{1 + \eps}(\R^3)) \cap L^p((0,T), W^{\sigma, q}(\R^3)),
 \end{equation*}
 equipped with 
 \begin{equation*}
     \| u \|_{X_T} := \| u \|_{L^\infty((0,T), H^{1 + \eps}(\R^3))} + \| u \|_{ L^p((0,T), W^{s, q}(\R^3))},
 \end{equation*}
 where $T>0$ is to be chosen later.  For any $t \in (0,T)$ and any $u_0 \in H^{1 + \eps}(\R^3)$, we define the map
 \begin{equation*}
     \phi(u_0,t)(u) = e^{it\Delta_G} u_0 - i \int_0^t e^{i(t-s)\Delta_G} (|u(s)|^{2} u(s) - i a u(s) )  ds.
 \end{equation*}
 We will show that for $T$ small enough, the map $\phi$ is a contraction on a ball centered at the origin to itself. 
 Indeed, by Strichartz estimates \eqref{eq:StrLossHom}, \eqref{eq:NonHomDG}, H\"older's inequality, Sobolev embedding (\ref{eq:sob_lwp}), and the fractional Leibniz rule, we have
 \begin{equation*}
 \begin{aligned}
     \| \phi(u_0, .) (u) \|_{X_T} &\leq C \| u_0 \|_{H^{1 + \eps}(\R^3)} + C  \| | u|^{2} u - i a u \|_{L^1((0,T), H^{1 + \eps}(\R^3))} \\
     & \leq C \| u_0 \|_{H^{1 + \eps}(\R^3)}  \\ 
     &\hspace{0.2cm}+ C \| u \|_{L^\infty((0,T),H^{1 + \eps}(\R^3))} (T\| a \|_{W^{1 +\eps,\infty}(\R^3)} + T^{\frac{p - 2}{p}} \| u \|^{2}_{L^p((0,T), L^\infty(\R^3))}) \\ 
     &\leq C \| u_0 \|_{H^{1 + \eps}(\R^3)}  + C_a(T +  T^{\frac{p- 2}{p}})(\| u \|_{X_T} + \| u \|_{X_T}^{3}).
      \end{aligned}
 \end{equation*}
Since $p - 2 >0$, there exists $T>0$ small enough so that $\phi$ send the ball of $X_T$, $B:= B_{X_T}\big(0, \frac C2 \Vert u_0 \Vert_{H^{1+\epsilon}(\R ^3)}\big)$, to itself.
In addition, we have similarly 
\begin{equation*}
    \|  \phi(u) - \phi(v) \|_{X_T} \lesssim (T +  T^{\frac{p - 2}{p}})  (1+\| u \|_{L^\infty((0,T),H^{1 + \eps}(\R^3)) }^{2}+ \| v \|_{L^\infty((0,T),H^{1 +\eps}(\R^3)) }^2)\|  u -v \|_{X_T},
\end{equation*}
which implies that the map $\phi$ is a contraction on $B$ for $T>0$ small enough. Thus $\phi$ admits a fixed point, which is a solution to \eqref{eq:DNLS}. Uniqueness and properties (1)--(2)--(3) are shown similarly. 
\end{proof}

 \begin{remark}
    When $G = \I$, the proof of the local well-posedness is a direct consequence of Kato's method, as in e.g. \cite[Section $4.4$]{Ca03}. In particular, we can choose $\eps = 0$.
\end{remark}

In the sequel of the paper, we will show that solutions are uniformly bounded forward in time in the energy space. As we handle $H^{1+\epsilon}$ well-posed solutions given by Proposition \ref{prpLocExsCubic}, this is a-priori not enough to obtain global existence from the local well-posedness theory alone. But we can show that such a bound on the energy indeed implies global existence:  
\begin{prop} \label{prop:cond_glob}
 Assume that the following holds: there exists a continuous function $C: \mathbb R_+ \to \mathbb R_+^*$ so that,  for any $\epsilon >0$ and $u_0 \in H^{1+\epsilon}(\R^3)$, if  $u$ is the solution to (\ref{eq:DNLS}) on $[0, T]$ with data $u_0$, then $\sup_{[0, T]}\Vert u \Vert_{H^1(\mathbb R^3)} \leq C(\Vert u_0 \Vert_{H^1(\mathbb R^3)})$. Then, the maximal solutions to (\ref{eq:DNLS}) are global forward in time. 
\end{prop}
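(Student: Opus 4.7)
The approach is by contradiction, leveraging the $H^1$ a priori bound provided by the hypothesis to preclude finite-time blow-up in $H^{1+\epsilon}$. Suppose that $u_0 \in H^{1+\epsilon}(\mathbb R^3)$ yields $T_{\max} < +\infty$; by the blow-up alternative \eqref{eqBUAlt} of Proposition \ref{prpLocExsCubic}, $\|u(t)\|_{H^{1+\epsilon}(\mathbb R^3)} \to +\infty$ as $t \to T_{\max}^-$. On the other hand, applying the standing hypothesis to $u$ restricted to $[0, T]$ for every $T < T_{\max}$ produces the uniform sub-critical control
\[
M \,:=\, \sup_{t \in [0, T_{\max})} \|u(t)\|_{H^1(\mathbb R^3)} \,\leq\, C\bigl(\|u_0\|_{H^1(\mathbb R^3)}\bigr) \,<\, +\infty .
\]

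Following the strategy used in \cite{BuGeTz04} to construct unique global weak $H^1$ solutions on a compact $3$-manifold, the plan is to iterate the local existence argument on a sequence of consecutive intervals $I_k = [t_k, t_{k+1}]$ of common length $\tau = \tau(M) > 0$ depending only on $M$, aiming for the recurrence
\[
\|u(t_{k+1})\|_{H^{1+\epsilon}(\mathbb R^3)} \,\leq\, K(M) \, \|u(t_k)\|_{H^{1+\epsilon}(\mathbb R^3)}
\]
with $K(M) > 0$ uniform in $k$. Since only $\lceil T_{\max}/\tau(M) \rceil < +\infty$ such intervals are needed to cover $[0, T_{\max})$, iterating this bound yields a uniform a priori control of $\|u(t)\|_{H^{1+\epsilon}}$ on $[0, T_{\max})$, contradicting the blow-up alternative.

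On each interval $I_k$ the recurrence is derived from the Duhamel formula
\[
u(t) \,=\, e^{i(t - t_k)\Delta_G} u(t_k) - i \int_{t_k}^{t} e^{i(t-s)\Delta_G} \bigl(|u|^2 u - i a u\bigr)(s)\, ds, \quad t \in I_k,
\]
and the loss-of-$1/p$ Strichartz estimates of Proposition \ref{prop:strich} in the mixed norm
\[
\|u\|_{X_k} \,:=\, \|u\|_{L^\infty(I_k, H^{1+\epsilon}(\mathbb R^3))} + \|u\|_{L^p(I_k, W^{\sigma, q}(\mathbb R^3))},
\]
with $(p, q)$ admissible and $\sigma = 1 + \epsilon - 1/p$ such that $W^{\sigma, q}(\mathbb R^3) \hookrightarrow L^\infty(\mathbb R^3)$, exactly as in the proof of Proposition \ref{prpLocExsCubic}. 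The damping contribution is harmless since $a \in C^\infty_c(\mathbb R^3)$ gives $\|au\|_{H^{1+\epsilon}} \lesssim \|u\|_{H^{1+\epsilon}}$. The cubic term is handled via a fractional Leibniz rule that places the $H^{1+\epsilon}$ regularity on a single factor and the two remaining factors in $L^6(\mathbb R^3)$, controlled by $M$ through the Sobolev embedding $H^1(\mathbb R^3) \hookrightarrow L^6(\mathbb R^3)$.

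The main obstacle is to close this estimate with a time step depending only on $M$. In 3D the cubic nonlinearity is $H^{1/2}$-critical, so the sub-critical $H^1$ control must be engaged non-trivially, while the $1/p$ loss in Strichartz means that a naive fixed-point closure in $X_k$ produces a time step $\tau_k \sim \|u(t_k)\|_{H^{1+\epsilon}}^{-\alpha}$ with $\alpha > 0$, whose iterated sum is finite and hence fails to reach $T_{\max}$. The adaptation of the argument of \cite{BuGeTz04} consists precisely in exploiting the full $H^1$ bound through the Leibniz splitting above together with careful H\"older-in-time considerations, so that the nonlinear contribution on each interval depends only linearly on the current $H^{1+\epsilon}$ norm with a pre-factor controlled by $M$ alone, thereby producing the uniform $\tau(M)$ and $K(M)$ required to close the recurrence.
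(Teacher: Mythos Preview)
Your proposal correctly identifies the central difficulty --- the $1/p$ derivative loss in the Strichartz estimates prevents a naive local-existence iteration with time step depending only on $M$ --- but it does not actually resolve it. The Leibniz splitting you describe does not close: if you place two factors in $L^6$ (controlled by $M$ via $H^1\hookrightarrow L^6$), the Kato--Ponce inequality forces the remaining factor into $W^{1+\epsilon,6}$, since $\tfrac12=\tfrac16+\tfrac16+\tfrac16$. But the space $W^{1+\epsilon,6}$ is \emph{not} part of your $X_k$ norm: the lossy Strichartz estimate at the endpoint $(p,q)=(2,6)$ only gives $L^2 W^{1/2+\epsilon,6}$, a full half-derivative short. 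Interpolation with the $L^\infty H^{1+\epsilon}$ component cannot recover this gap. Conversely, keeping the high-regularity factor in $H^{1+\epsilon}$ forces the other two factors into $L^\infty$, and $\|u\|_{L^\infty}$ is not controlled by $M$ in three dimensions. ``Careful H\"older-in-time considerations'' cannot repair this: the obstruction is a genuine derivative deficit, not a time-integrability issue.

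The paper's proof proceeds by an essentially different and more delicate route. The key missing ingredient is a \emph{logarithmic} $L^2_tL^\infty_x$ estimate (the analogue of \cite[Lemma~3.6]{BuGeTz04}): from a frequency-localized Strichartz estimate with an $h^\epsilon$ gain at high frequencies one obtains
\[
\|u\|_{L^2([0,T],L^\infty)}\le C\Big((T\log(2+\|u\|_{L^2([0,T],H^s)}))^{1/2}+1\Big),
\]
with a constant depending only on the $H^1$ bound. Feeding this into Duhamel and Gronwall gives, for \emph{smooth} solutions, a bound on $\|u\|_{L^\infty_{\rm loc}H^s}$ depending only on $M$ and $T$. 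The passage from smooth to $H^{1+\epsilon}$ data then requires a second non-trivial step: a Yudovitch-type stability argument showing that two $L^\infty H^1$ solutions with $L^2$-close data remain $L^2$-close on intervals of length $\tau_0(M)$, followed by approximation of $u_0$ by smooth data and interpolation. Neither of these two ingredients appears in your sketch, and both are essential.
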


\begin{proof}
Denote $s := 1 + \epsilon$. In view of the local well-posedness theory given by Proposition \ref{prpLocExsCubic}, it suffices to show that $\Vert u(t) \Vert_{H^s(\R^3)}$ is bounded in bounded (positive) times.

\textbf{Step 1 -- smooth solutions.} We begin by dealing with smooth solutions.
    The proof is contained in \cite[\S 3.3]{BuGeTz04}, dealing with the damping $iau$ as an additional ``non-linearity''. We sketch the argument. As already observed, \cite{BuGeTz04} applies to our setting to show frequency localized Strichartz estimates for $e^{it\Delta_G}$. From these estimates, one can show the analogue of \cite[Lemma 3.6]{BuGeTz04} to our setting: namely,
    there exists $C>0$ so that for any $a<b$ with $b-a < 1$ and $u \in C([a, b], H^1(\mathbb R^3))$ a solution to 
    $$
i \partial_t u + \Delta_G u = - i au + |u|^2 u,
    $$
    one has
\begin{equation}\label{eq:3.11}
\Vert u_h \Vert_{L^2([a,b], L^6(\R^3))} \leq C h^{\frac 12}
\Vert  u_h\Vert_{L^2([a,b],H^1(\mathbb R^3))} + Ch^{\frac 12 + \epsilon} \Lambda(\Vert u \Vert_{L^\infty([a,b], H^1(\mathbb R^3))}),
\end{equation}
        for some $\epsilon > 0$, where $\Lambda : \mathbb R_+ \mapsto \mathbb R_+$ is non-decreasing, 
    and $u_h$ denotes a  localization of $u$ to frequencies $\sim h^{-1}$ ($0<h<1$).
Indeed, an inspection of the proof of \cite[Lemma 3.6]{BuGeTz04} reveals that in addition to the frequency localized Strichartz estimates, the only other ingredient is that the source term $f:= - i au + |u|^2 u$ ($= G'(|u|^2)u$ in \cite{BuGeTz04}) verifies
    $$
    |\nabla f| \leq C \langle u \rangle^{2m - 2} |\nabla u|
    $$
    for some $m< \frac 5 2$, which is verified in our case with $m = 2$. Next, in the same way as \cite[p.589-590]{BuGeTz04}, by a well-chosen summation, it follows that any  solution which is uniformly bounded in $H^1(\mathbb R^3)$ verifies
\begin{equation} \label{eq:GB}
\Vert u \Vert_{L^2([0,T], L^\infty(\R^3))} \leq C \Big( (T \log(2 + \Vert u \Vert_{L^2([0,T], H^s(\R^3))}))^{\frac 12} + 1\Big).
\end{equation}
    But by the Duhamel formula together with standard nonlinear estimates
    $$
    \Vert u \Vert_{L^\infty([0, T], H^s(\R^3))} \leq \Vert u_0 \Vert_{H^s(\R^3)} + C \int_0^T (\Vert u(t) \Vert_{L^\infty(\R^3)}^2 + 1)\Vert u(t) \Vert_{H^s(\R^3)} dt.
    $$
    If $u$ is a smooth solution, the above combined with (\ref{eq:GB}) shows by a Gronwall argument that $\Vert u(t) \Vert_{H^s(\R^3)}$ is bounded in bounded times. It follows that any smooth, $H^s(\R^3)$ solution is global and in $L^\infty_{\rm loc}H^s$. For future reference, note that we obtain a bound on $\Vert u \Vert_{L^\infty([0, T], H^s(\R^3))}$ which depends only, and continuously, on $T$ and $\Vert u \Vert_{L^\infty([0, T], H^1(\mathbb R^3))}$ (non-decreasingly in both).

\textbf{Step 2 -- Yudovitch argument.} To apply the above to a non-smooth solution, we will use a refinement of the Yudovitch argument \cite{Yu67} used to show the uniqueness of $L^\infty H^1$ solutions in 
\cite[p.591 - 592]{BuGeTz04}. Namely:
\begin{claim} The following holds.
\begin{enumerate}
\item[(i)]
For any $M>0$, any $T>0$, and any $\epsilon >0$, there exists $\delta>0$ so that, if $u$ and $\widetilde u$ are solutions to the integral equations
\begin{equation}\label{eq:duha}
u(t) = e^{it\Delta_G} u_0 - i \int_0^t e^{i(t-s)\Delta_G} (|u(s)|^{2} u(s) - i a u(s) )  ds,
\end{equation}
\begin{equation}
\widetilde u(t) = e^{it\Delta_G} \widetilde {u_0} - i \int_0^t e^{i(t-s)\Delta_G} (|\widetilde u(s)|^{2} \widetilde u(s) - i a \widetilde u(s) )  ds,
\end{equation}
defined on $[0, T]$ and so that 
$$
\Vert u \Vert_{L^\infty ((0, T) H^1(\mathbb R^3))} \leq M, \hspace{0.5cm} \Vert \widetilde u \Vert_{L^\infty ((0, T) H^1(\mathbb R^3))} \leq M,
$$
and
$$
\Vert \widetilde u(0) - u(0) \Vert_{L^2(\R^3)} \leq \delta,
$$ 
then
$$
\sup_{t \in [0, T]} \Vert \widetilde u(t) - u(t) \Vert_{L^2(\R^3)} \leq \epsilon.
$$
\item[(ii)] For any $T>0$, solutions to (\ref{eq:duha}) on $[0, T]$ are unique subject to the condition $u \in L^\infty([0, T], H^1(\mathbb R^3))$.
\end{enumerate}
\end{claim}
Point (ii) follows from almost the same proof as in \cite{BuGeTz04}, while for (i), the supplementary ingredient is to track the dependency on the initial data. We now give the details.
To show (i), we will show that there exists $\tau_0>0$ depending only on $M$ so that for any $\epsilon>0$ there is a $\delta>0$ so that
$$
\Vert \widetilde u(0) - u(0) \Vert_{L^2(\R^3)} \leq \delta,
\implies
\sup_{t \in [0, \tau_0]} \Vert \widetilde u(t) - u(t) \Vert_{L^2(\R^3)} \leq \epsilon.
$$
As $\tau_0>0$ depends only on $M>0$, the result follows by iterating on ${\sim \tau_0^{-1}T}$ sub-intervals. From (\ref{eq:3.11}) it follows as in \cite[(3.28) p. 591]{BuGeTz04} that, if $\Vert u \Vert_{L^\infty ((0, T) H^1(\mathbb R^3))} \leq M$, we have for any $0\leq t \leq T$, with a constant depending only on $M>0$, for any $p\geq 6$
\begin{equation} \label{eq:Yudo}
\Vert u \Vert_{L^2((0, t), L^p(\R^3))} \leq C (\sqrt{tp}+1).
\end{equation}
Introduce
$$
g(t) := \Vert \widetilde u(t) - u(t) \Vert^2_{L^2(\R^3)} = \Vert e^{-it\Delta_G} (\widetilde u(t) - u(t) )\Vert^2_{L^2(\R^3)}.
$$
Now, compute
\begin{align*}
\partial_t g &\leq 2 \operatorname{Im}\langle|u(t)|^2u(t) - |\widetilde u(t)|^2\widetilde u(t), u(t) - \widetilde u(t)\rangle_{L^2} \\
&\leq C \int (|u(t)|^2 + | \widetilde u (t)|^2 ) |u(t) - \widetilde u (t)|^2 \\
&\leq C (\Vert u(t) \Vert^2_{L^{2p}(\R^3)} + \Vert \widetilde u(t) \Vert^2_{L^{2p}(\R^3)})\Vert u(t) - \widetilde u (t) \Vert^2_{L^{2 \bar p}(\R^3)},
\end{align*}
with $p<\infty$ big and $\bar p = \frac{p}{p-1}$
(observe that this is the same as \cite[p. 591]{BuGeTz04}, but with an inequality in the first line instead of the equality, the damping  $iau$ introducing only a dissipative term).
From there, it follows as in \cite[p. 591]{BuGeTz04}, interpolating the $L^{2 \bar p}$-norm between the $L^2$-norm and the $L^6$-norm, which is bounded by Sobolev embedding, that
$$
\partial_t g \leq C (\Vert u(t) \Vert^2_{L^{2p}(\R^3)} + \Vert \widetilde u(t) \Vert^2_{L^{2p}(\R^3)}) g(t)^{1-\frac{3}{2p}},
$$
hence, integrating
$$
pg(t)^{\frac{3}{2p}} \leq C (\Vert u \Vert^2_{L^2((0, t)L^{2p}(\R^3))} + \Vert \widetilde u \Vert^2_{L^2((0, t)L^{2p}(\R^3))})+ pg(0)^{\frac{3}{2p}}
$$
and from (\ref{eq:Yudo})
$$
p g(t)^{\frac{3}{2p}} \leq C (tp + 1) + p g(0)^{\frac{3}{2p}}, 
$$
from which 
\begin{align*}
g(t) &\leq \Big[C (t + \frac 1 p) + g(0)^{\frac{3}{2p}}\Big]^{\frac{2p}{3}} \\
&\leq \Big[ 2 C(t + \frac 1 p) \Big]^{\frac{2p}{3}} + 2^{\frac{2p}{3}} g(0),
\end{align*}
(we used the inequality $(a+b)^q \leq 2^q(a^q + b^q)$ for $a, b \geq 0$). Now, first fix $\tau_0>0$ small enough so that $2C \tau_0 \leq \frac 12$. It then follows that, for any $t \in [0, \tau_0]$
$$
\Big[2 C(t + \frac 1 p) \Big]^{\frac{2p}{3}} \leq \Big[\frac 12 + \frac {2C} {p} \Big]^{\frac{2p}{3}},
$$
and the right-hand side goes to zero as $p \to \infty$. Let us fix $p \gg 6$ big enough so that
$$
\Big[\frac 12 + \frac C p \Big]^{\frac{2p}{3}} \leq \frac \epsilon 2.
$$
With such a $p$ being fixed, we now take $\delta >0$ small enough so that
$$
 2^{\frac{2p}{3}} \delta^{2} \leq \frac \epsilon 2.
$$
Part (i) of the claim follows. Part (ii) is shown similarly with $g(0) = 0$, exactly as in \cite{BuGeTz04}.

\textbf{Step 3 -- Conclusion.}
Let now $u_0 \in H^s(\R^3)$, $s= 1+\epsilon$, and $u \in C([0, T_{\rm max}), H^s(\R^3))$ the associated maximal forward solution. We approximate $u_0$ in $H^s(\R^3)$ by a sequence of functions $(u_0^n)_{n\geq 1}$ in $H^m(\R^3)$, with $m$ big enough. Let $u^n$ be the associated solutions. From the first step, they are global, and in $L^\infty H^1$ and $L^\infty_{\rm loc}H^s$, both uniformly in $n$. From the claim, point (i), for any $T>0$, $u^n$ is Cauchy in $C([0, T], L^2(\R^3))$. It follows that there exists a function $v$ so that
$$
\forall T>0, \hspace{0.5cm}\Vert u^n - v \Vert_{C([0, T], L^2(\R^3))} \to 0.
$$
From the above and the uniform $L^\infty H^1$ bound on $u^n$, it follows that $v \in L^\infty H^1$. Thus, by interpolation, we get
$$
\forall T>0, \; \forall 0 \leq \sigma < 1, \hspace{0.5cm}\Vert u^n - v \Vert_{C([0, T], H^\sigma(\R^3))} \to 0.
$$
Using the above and the uniform $ L_{\rm loc}^\infty H^s$ bound on $u^n$, we get $v \in L_{\rm loc}^\infty H^s$. Interpolating one last time, we get
$$
\forall T>0, \; \forall 0 \leq \sigma < s, \hspace{0.5cm}\Vert u^n - v \Vert_{C([0, T], H^\sigma(\R^3))} \to 0.
$$
From Sobolev embedding, it follows that for any $T>0$, $u^n|u^n|^2 \to v|v|^2$ in $C([0, T], L^2(\R^3))$, hence $v$ is solution to  (\ref{eq:duha}).
From point (ii) of the claim, it follows that $u = v$ on $[0, T_{\rm max})$. As  $v \in L_{\rm loc}^\infty H^s$, this ends the proof.

\end{proof}

\section{Uniform bound on the energy and local energy decay}
\label{s:nrj}
The main result of this section is the following.
\begin{prop}\label{prpUnifEst}
Assume that the control condition (\ref{eq:control}) holds.
Then, for any $\epsilon>0$, for any $u_0 \in H^{1+\epsilon}(\mathbb R^3)$, the unique maximal solution $u$ to \eqref{eq:DNLS} with data $u_0$ is defined globally forward in time, and verifies, for any $R>0$
$$
\sup_{t>0} E[u(t)] \lesssim \Vert u_0 \Vert_{H^1(\mathbb R^3)}^2 + \Vert u_0 \Vert_{L^4(\R^3)}^4 < \infty, \hspace{0.5cm} \Vert u \Vert_{L^2(\mathbb R_+, L^2(B(0,R)))} \lesssim_R \Vert u_0 \Vert_{H^1(\mathbb R^3)}^2 + \Vert u_0 \Vert_{L^4(\R^3)}^4 < \infty.
$$
\end{prop}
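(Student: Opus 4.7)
The proof follows the strategy sketched in the introduction: work on the maximal forward interval $[0, T_{\max})$, run a Morawetz computation with linear weight perturbatively around the free Laplacian, and use the damped energy identity to close a nonlinear inequality for the localized mass
$$\lambda(t) := \int_0^t \!\! \int_{\R^3} \langle x\rangle^{-7}|u(s,x)|^2\,dx\,ds.$$
The first ingredient is the energy identity: pairing (\ref{eq:DNLS}) with $\partial_t\bar u$, taking real parts and integrating by parts yields
$$\frac{d}{dt}E[u(t)] = -\int a|u|^4\,dx - \int a\,G\nabla u\cdot\nabla\bar u\,dx + \frac{1}{2}\int |u|^2 \operatorname{div}(G\nabla a)\,dx,$$
where the first two terms are non-positive and the third is supported on the compact set $\operatorname{supp}\nabla a\subset\{a>0\}$. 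Integrating in time and using $\chi_{B(0,R)}\lesssim\langle x\rangle^{-7}$ for any $R$ with $\operatorname{supp}\nabla a\subset B(0,R)$ gives the key control $\sup_{[0,t]}E[u]\leq E[u(0)] + C\lambda(t)$. Retaining the $\int aG|\nabla u|^2$ term further produces $\int_0^t\!\int a|\nabla u|^2 \leq C(E[u(0)] + \lambda(t))$, and plain mass dissipation gives $\int_0^t\!\int a|u|^2 \leq \tfrac12 \Vert u_0\Vert_{L^2}^2$.

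Second, I would compute the time derivative of the standard Morawetz functional $\operatorname{Im}\int \bar u(\nabla\psi\cdot\nabla u + \tfrac{1}{2}\Delta\psi\, u)$ with a radial weight $\psi$ behaving like $\langle x\rangle$. For the free Laplacian, this produces the positive principal terms $\lambda(t)$ (from $-\tfrac{1}{2}\Delta^2\psi|u|^2$), a non-negative gradient-Hessian term $\int D^2\psi(\nabla u,\overline{\nabla u})$, and a positive nonlinear contribution $\int|u|^4\Delta\psi$. The perturbation $\Delta_G-\Delta = \operatorname{div}((G-\mathrm{I})\nabla)$ and the damping $iau$ generate additional error terms that, after integrations by parts, are all localized on $\operatorname{supp}(G-\mathrm{I})\cup\operatorname{supp}\nabla a\cup\operatorname{supp}a\subset\{a>0\}$ by the control condition (\ref{eq:control}). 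After appropriate Cauchy--Schwarz splits, these errors are expressible in terms of $\int_0^t\!\int a(|\nabla u|^2 + |u|^2)$, $\lambda(t)$, and the boundary terms $\operatorname{Im}\int \bar u\,\nabla\psi\cdot\nabla u\,\big|_0^t \lesssim \Vert u_0\Vert_{L^2}(E[u(0)] + \lambda(t))^{1/2}$, all of which are controlled by the quantities from the previous paragraph.

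Combining, I arrive at an inequality of the form $\lambda(t) \leq b\lambda(t)^{1/2} + c$, equivalent to the quadratic inequality $\lambda(t)^2 - b\lambda(t) - c \leq 0$ advertised in the introduction, with $b,c$ depending only on $\Vert u_0\Vert_{H^1}$ and $\Vert u_0\Vert_{L^4}$. This forces a uniform bound $\sup_{[0,T_{\max})}\lambda \leq \Lambda$; via the energy control above this yields the desired uniform energy bound, and then Proposition~\ref{prop:cond_glob} upgrades local to global forward existence. Letting $t\to+\infty$ in $\lambda(t)\leq\Lambda$ and using $\chi_{B(0,R)}\lesssim\langle x\rangle^{-7}$ gives the claimed local $L^2L^2$ bound. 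The main obstacle is the second step: the perturbation $G-\mathrm{I}$ is not small and the damping is not constant, so the Morawetz errors are not a priori manageable. The control condition is precisely what places them where the dissipation from the energy identity and from mass dissipation lives, and the careful bookkeeping of the Cauchy--Schwarz splits -- ensuring that $\lambda$ appears on the right only at sublinear order $\lambda^{1/2}$ so that the inequality actually closes, rather than trivially yielding $\lambda \lesssim 1 + \lambda$ -- is the technical heart of the argument.
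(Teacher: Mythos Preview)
Your overall strategy matches the paper's: define $\lambda$, control the energy by $\lambda$ via the energy law, run the perturbative Morawetz computation, and close a quadratic inequality. However, there is a genuine gap in how you propose to close the inequality.

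Your bound $\int_0^t\!\int a|\nabla u|^2 \leq C(E[u(0)] + \lambda(t))$, obtained from the energy identity (\ref{eq:EnerEvol2}) together with $E[u(t)]\geq 0$, is \emph{linear} in $\lambda$. The Morawetz error localized on $\operatorname{supp}(G-\mathrm{I})$ is bounded, via the control condition, by $C_1\delta^{-1}\int_0^t\!\int a(|\nabla u|^2 + |u|^4)$ with $\delta := \inf_{\operatorname{supp}(G-\mathrm{I})} a$. Plugging your bound in yields
\[
V(t) \geq \lambda(t)\bigl(1 - C_1\delta^{-1}C\bigr) - \cdots,
\]
and nothing forces $C_1\delta^{-1}C < 1$: the constants $C_1$ (from the Morawetz error in Proposition~\ref{prop:Mor}), $\delta^{-1}$, and $C$ (involving $\|\Delta_G a\|_\infty$) are all of unit size and fixed by the data $G,a$. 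Thus the inequality degenerates to precisely the $\lambda \lesssim 1 + \lambda$ you warn against in your last paragraph; the $\lambda^{1/2}$ coming only from the Morawetz boundary term cannot rescue this.

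The paper's fix has two ingredients you are missing. First, instead of (\ref{eq:EnerEvol2}) it uses the pre-integration-by-parts form (\ref{eq:EnerEvol1}), whose error term is $\operatorname{Re}\int G\nabla u\cdot\bar u\,\nabla a$. This is split via the elementary pointwise inequality $|\nabla a| \leq C_\epsilon a + \epsilon$ (valid for every $\epsilon>0$ since $a\geq 0$ is smooth and compactly supported), producing a \emph{small-coefficient} contribution $\epsilon\int\psi(|u|^2+|\nabla u|^2)$ (with $\psi\in C^\infty_c$, $\psi=1$ on $\operatorname{supp}a$) plus a term $C_\epsilon\int a|u||\nabla u|$. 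Second, the positive principal Morawetz term $\int D^2\chi\,\nabla u\cdot\nabla\bar u$ is not discarded as merely non-negative: since $D^2\chi \gtrsim \langle x\rangle^{-3}\mathrm{I}$, it dominates $\eta\int\psi|\nabla u|^2$ for some fixed $\eta>0$ and therefore \emph{absorbs} the $\epsilon$-gradient error once $\epsilon$ is chosen small. The remaining $C_\epsilon\int a|u||\nabla u|$ is, by Cauchy--Schwarz and the mass law, $\lesssim M[u_0]^{1/2}\bigl(\int\psi|\nabla u|^2\bigr)^{1/2}$, sublinear in the absorbed quantity and handled by completing the square. Only after this mechanism does $\lambda$ appear on the right with coefficient $<1$, and the quadratic inequality actually closes.
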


As preliminaries, the mass and energy laws are stated in \S\ref{ss:laws}, and a perturbative (with respect to $G = \operatorname{I})$ Morawetz identity is given in \S\ref{ss:mor}. Proposition \ref{prpUnifEst} is then showed in \S\ref{ss:proof_unif}. Local energy decay statements are given as Corollaries in \S\ref{ss:loc_dec}.
\subsection{Mass and energy laws} \label{ss:laws}

\begin{lem}\label{lm:GWPDef}
        Let $\epsilon >0$, $u_0 \in H^{1 + \epsilon}(\R^3)$, and $u\in C([0,T_{\rm max}),H^{1+\epsilon}(\R^3))$ the unique maximal solution to \eqref{eq:DNLS}. Then for any $0 \leq t_0 \leq t < T_{\rm max}$,
            \begin{equation}\label{eq:MassEvol}
        \| u(t) \|_{L^2(\R^3)}^2 - \| u(t_0)\|_{L^2(\R^3)}^2  = - 2 \int_{t_0}^t \int a(x) |u(s,x)|^2 \,dx \, ds,
    \end{equation}
    in particular,  
    \begin{equation}\label{eq:LocMasDcy}
        \int_0^{T_{\rm max}} \int a(x) |u(s,x)|^2 \, dx \,  ds \leq \frac{ \| u_0 \|_{L^2(\R^3)}^2}{2}.
    \end{equation}
    In addition,
    \begin{align}\label{eq:EnerEvol1}
        E[u(t)] - E[u(t_0)]  &= - \int_{t_0}^t \int a \big( |u|^4 + G \nabla u \cdot \nabla \bar u \big) \, dx  ds + \operatorname{Re}\int  \int_{t_0}^t  G \nabla u \cdot \bar u \nabla a \, dx  ds \\
        &= - \int_{t_0}^t \int a \big( |u|^4 + G \nabla u \cdot \nabla \bar u \big) \, dx  ds - \frac 12 \int  \int_{t_0}^t  |u|^2 \Delta_G a \, dx  ds. \label{eq:EnerEvol2}
    \end{align}
    
\end{lem}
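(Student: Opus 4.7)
The identities are standard formal computations obtained by pairing the equation with $\bar u$ (for mass) and differentiating the energy in time (for energy); what makes them non-trivial here is that $u_0 \in H^{1+\epsilon}$ is not enough regularity to handle $\Delta_G u$ classically. My plan is thus to first establish the identities at a smooth level by direct calculation, and then pass to $H^{1+\epsilon}$ solutions by an approximation argument resting on the persistence of regularity provided by Proposition \ref{prpLocExsCubic}(3).

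\textbf{Mass law.} At the smooth level, multiplying \eqref{eq:DNLS} by $\bar u$ and taking the imaginary part yields
\begin{equation*}
\tfrac 12 \partial_t |u|^2 \;=\; \operatorname{Im}(\bar u \Delta_G u) - a|u|^2.
\end{equation*}
Upon integration in space, the Laplacian contribution vanishes: $\operatorname{Im}\int \bar u \Delta_G u\,dx = -\operatorname{Im}\int G\nabla u \cdot \nabla \bar u\,dx = 0$, since $G\nabla u \cdot \nabla \bar u$ is real by the symmetry and reality of $G$. Integrating in time then gives \eqref{eq:MassEvol}; the bound \eqref{eq:LocMasDcy} follows by taking $t_0 = 0$, letting $t \to T_{\rm max}^-$, and using the non-negativity of $\|u(t)\|_{L^2}^2$.

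\textbf{Energy law.} Using $\partial_t u = i\Delta_G u - au - i|u|^2 u$ together with the standard integration-by-parts identities $\partial_t \int G\nabla u \cdot \nabla \bar u = -2\operatorname{Re}\int \partial_t u\,\Delta_G \bar u\,dx$ and $\partial_t \int |u|^4 = 4\operatorname{Re}\int |u|^2 \bar u\,\partial_t u\,dx$, the Schrödinger and cubic contributions to $\partial_t E$ cancel (encoding the conservation of $E$ under the undamped flow), leaving
\begin{equation*}
\partial_t E[u(t)] \;=\; \operatorname{Im}\int a\bar u\,\partial_t u\,dx \;=\; \int a\operatorname{Re}(\bar u \Delta_G u)\,dx - \int a|u|^4\,dx,
\end{equation*}
where the second equality substitutes back the equation. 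A further integration by parts applied to $\int a\bar u \Delta_G u\,dx$, using the Leibniz rule $\nabla(a\bar u) = \bar u \nabla a + a\nabla\bar u$, produces both $\int a\,G\nabla u \cdot \nabla \bar u\,dx$ and a term of the form $\operatorname{Re}\int \bar u\,G\nabla u \cdot \nabla a\,dx$, giving \eqref{eq:EnerEvol1} after integration in time. The equivalent form \eqref{eq:EnerEvol2} follows by the pointwise identity $G\nabla|u|^2 = 2\operatorname{Re}(\bar u\,G\nabla u)$ and one further integration by parts against $\nabla a$, which converts $\operatorname{Re}\int \bar u\,G\nabla u \cdot \nabla a\,dx$ into $-\tfrac 12 \int |u|^2 \Delta_G a\,dx$.

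\textbf{Approximation at $H^{1+\epsilon}$.} To justify the computations when $u_0 \in H^{1+\epsilon}$, I approximate $u_0$ in $H^{1+\epsilon}$ by a sequence $u_0^n \in H^m(\R^3)$ with $m$ large enough that $\Delta_G u^n$ is classical (say $m \geq 3$). By part (1) of Proposition \ref{prpLocExsCubic}, the $u^n$ exist on a common sub-interval $[0, T] \subset [0, T_{\rm max})$ since their $H^{1+\epsilon}$-norms are uniformly bounded; by part (3), they are $H^m$-regular there; and by the continuous dependence statement (2), $u^n \to u$ in $C([0, T], H^{1+\epsilon})$. Every term appearing on both sides of the mass and energy identities is continuous in the $H^{1+\epsilon}$ topology---for the quartic term, one uses the Sobolev embedding $H^{1+\epsilon} \hookrightarrow L^4(\R^3)$---and hence passes to the limit. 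Since $T < T_{\rm max}$ is arbitrary, the identities hold throughout $[0, T_{\rm max})$. This approximation step is the only non-routine point; the formal identities themselves are algebraic.
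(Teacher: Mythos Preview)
Your proof is correct and follows essentially the same route as the paper: formal pairings (with $\bar u$ for the mass law, effectively with $\partial_t u$ for the energy law) justified by an approximation argument. You spell out the approximation step via Proposition~\ref{prpLocExsCubic}(1)--(3) where the paper simply invokes ``a classical approximation argument'', but the substance is identical.
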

\begin{proof}
We begin with \eqref{eq:EnerEvol1}--\eqref{eq:EnerEvol2}.
    By taking formally the scalar product of \eqref{eq:DNLS} with $\partial_t u$, we obtain (denoting $(f, g) := \operatorname{Re}\int f \bar g\, dx$):
    \begin{align*}
        0 = (\partial_t u, i \partial_t u) = ( \partial_t u, - \Delta_G u + |u|^2 u - i a u) =  \frac{d}{dt} E[u(t)] - (\partial_t u, i a u).
    \end{align*}
    For the last term on the right-hand side, we observe that 
    \begin{align*}
        - (\partial_t u, i a u) = - (i \Delta_G  u - i|u|^2 u - au, i a u) = (|u|^2u, au) + (G\nabla u , a \nabla u) + (G\nabla u, u \nabla a ),
    \end{align*}
    thus we obtain 
    \eqref{eq:EnerEvol1} by time integration and \eqref{eq:EnerEvol2} follows by integration by parts. A classical approximation argument can be used to justify the computations. Similarly, 
    by taking the scalar product of \eqref{eq:DNLS} with $iu$, we get 
    \begin{equation*}
       (i\partial_t u, i u) = \frac{1}{2} \frac{d}{dt} \| u \|_{L^2(\R^3)}^2 = (i \nabla \cdot (G \nabla u ), u) - (au,u) = - (au,u) - (i G\nabla u, \nabla u).
    \end{equation*}
    The last term on the right-hand side is zero because $G$ is symmetric, and we  obtain 
    \eqref{eq:MassEvol} after integration in time.
\end{proof}

\begin{remark}\label{prpBackWP}
    It is possible to obtain  an exponential bound on the $H^1$-norm of solutions (including negative times) in the following way. On the one hand, from Gronwall's inequality and \eqref{eq:MassEvol} we have 
    \begin{equation*}
        M[u(t)] \leq M[u_0] e^{2\| a \|_{L^{\infty}} |t|}.
    \end{equation*}
    On the other hand, \eqref{eq:EnerEvol2} implies, formally, that
$$
        \frac{d}{dt} E[u(t)]  
          \lesssim \| a \|_{C^2(\R^3)} \left( E[u(t)] + M[u(t)] \right).
$$   
    By Gronwall's inequality, we thus obtain 
     \begin{equation*}
        \| \nabla u(t) \|_{L^2}^2 \leq 2 E[u(t)] \lesssim e^{C|t|} E[u_0].
    \end{equation*}
    In what follows, we will show an uniform-in-time upper bound on the energy for positive times.
\end{remark}

\subsection{A perturbative Morawetz identity}  \label{ss:mor}

\begin{prop} \label{prop:Mor}
Let $\chi \in C^4(\mathbb R^3)$ be so that, for any multi-indice $\alpha$ of size $|\alpha| \leq 4$
$$
|\partial^{\alpha} \chi(x)|\lesssim \langle x \rangle^{1- |\alpha| }.
$$
Then, $C>0$ exists so that the following holds.
    Let $\epsilon >0$, $u_0 \in H^{1 + \epsilon}(\R^3)$, and $u\in C([0,T_{\rm max}),H^{1+\epsilon}(\R^3))$ be solution to \eqref{eq:DNLS}. 
    Then for any $0 \leq s \leq t < T_{\rm max}$,
    \begin{equation*}\label{eqVirVir}
    \begin{aligned}
    \operatorname{Im}\int  \bar u(t) \nabla  u(t) \cdot \nabla \chi dx &=  \operatorname{Im}\int  \bar u(s) \nabla  u(s) \cdot \nabla \chi dx + \int_s^t \int  2 D^2 \chi \nabla u \cdot \nabla \bar u - \frac 12 \Delta^2 \chi |u|^2\\ &+ \frac 12 \Delta \chi |u|^4 
     - 2 \operatorname{Im} ( a \bar u \nabla u \cdot \nabla \chi) \, dx d\tau + \int_s^t \mathcal E(\tau)  \, d\tau,
      \end{aligned}
    \end{equation*}
    where
    $$
    | \mathcal E(t) | \leq C \int_{\operatorname{supp}(G - \operatorname{I})} |\nabla u(t, x)|^2 + |u(t, x)|^2 + |u(t, x)|^4\, dx.
    $$
\end{prop}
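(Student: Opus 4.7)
The plan is to compute $\frac{d}{dt} V(t)$ where $V(t) := \operatorname{Im}\int \bar u\, \nabla u \cdot \nabla \chi\, dx$, to treat the operator perturbatively via $\Delta_G = \Delta + \operatorname{div}(B\nabla\cdot)$ with $B := G - \I$ compactly supported, and then integrate in time between $s$ and $t$. From the equation one has $\partial_t u = i\Delta_G u - i|u|^2 u - au$, so that
$$
\frac{d}{dt} V = \operatorname{Im}\int \bigl(\partial_t \bar u\, \nabla u + \bar u\, \nabla \partial_t u\bigr)\cdot \nabla\chi\, dx
$$
splits naturally into four contributions corresponding to the four terms in $\partial_t u$: (a) the free kinetic part from $i\Delta u$; (b) the perturbative kinetic part from $i\operatorname{div}(B\nabla u)$; (c) the defocusing nonlinearity $-i|u|^2 u$; and (d) the damping $-au$.

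Parts (a), (c) and (d) reproduce exactly the explicit terms in the claimed identity. In (a), two successive integrations by parts (legitimate since $\chi$ has at most linear growth with bounded higher derivatives while $u\in C([0,T_{\max}),H^{1+\epsilon})$) yield the classical free Morawetz contribution $\int \bigl(2 D^2\chi\, \nabla u \cdot \nabla \bar u - \tfrac{1}{2}\Delta^2\chi\, |u|^2\bigr)\, dx$. In (c), writing $|u|^2\nabla|u|^2 = \tfrac{1}{2}\nabla|u|^4$ and integrating by parts gives $\tfrac{1}{2}\int \Delta\chi\, |u|^4\, dx$. In (d), the real contributions of the two damping pieces cancel against each other, and only the oscillating remainder $-2\int \operatorname{Im}(a\bar u\,\nabla u\cdot \nabla\chi)\, dx$ survives.

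The perturbative piece (b) is the heart of the statement. The key observation is that $B$ is compactly supported, so every integration by parts produces no boundary term at infinity and localizes every resulting term on $\operatorname{supp} B = \operatorname{supp}(G - \I)$. Moving all derivatives off $u$ onto $\chi$ and $B$, the second-order derivatives of $u$ arising from commuting $\nabla$ with $\operatorname{div}(B\nabla\cdot)$ must cancel after a careful computation, and the residual becomes a finite sum of integrals of the shape $\int_{\operatorname{supp} B} c_j(x)(|\nabla u|^2 + |u|^2 + |u|^4)\, dx$, with coefficients bounded in terms of $\|B\|_{C^2}$ and of the $C^4$-bounds on $\chi$ restricted to $\operatorname{supp} B$. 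This gives the claimed bound on $\mathcal E(t)$.

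The main obstacle will be the bookkeeping in part (b): one must carry out several successive integrations by parts and verify that all top-order derivatives of $u$ cancel, leaving only a quadratic/quartic expression in $u$ and $\nabla u$ localized on $\operatorname{supp}(G-\I)$. The compact support of $B$ is essential both to localize $\mathcal E$ and to kill boundary contributions at infinity. Finally, the formal manipulations are justified by a standard density argument: one approximates $u_0\in H^{1+\epsilon}$ in $H^{1+\epsilon}$ by smoother data, invokes Proposition \ref{prpLocExsCubic}(3) to obtain smoother solutions on a common subinterval to which the identity applies classically, and passes to the limit using continuous dependence together with the integrability of each term.
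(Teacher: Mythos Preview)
Your proposal is correct and follows essentially the same route as the paper: differentiate the virial quantity, plug in the equation, and split off the $(G-\I)$-contributions as a localized error. The only organizational difference is that the paper first integrates by parts once to rewrite $\frac{d}{dt}V$ as $-\int \operatorname{Im}(\bar u\,\partial_t u)\,\Delta\chi - 2\int \operatorname{Im}(\partial_t u\,\nabla\bar u)\cdot\nabla\chi$ and then inserts the equation, whereas you decompose directly according to the four terms of $\partial_t u$; both lead to the same explicit terms and the same error. One small remark: the perturbative kinetic piece (your part (b)) only produces terms of the form $|\nabla u|^2$ and $|u||\nabla u|$ on $\operatorname{supp}(G-\I)$, so the $|u|^4$ in the bound on $\mathcal E$ is not actually needed there---it is harmless but redundant.
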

\begin{proof}
We observe that 
\begin{equation*}
    \frac{d}{dt} \int \operatorname{Im}(\bar{u} \nabla u \cdot \nabla \chi) dx = - \int \operatorname{Im} \left( \bar u \partial_t u \right) \Delta \chi dx - 2 \int \operatorname{Im} \left( \partial_t u \nabla \bar u \right) \cdot \nabla \chi dx =: {\rm I} + {\rm II}.
\end{equation*}
Using equation \eqref{eq:DNLS} we get 
\begin{equation*}
\begin{aligned}
      {\rm I} &= \int (|u|^4 + |\nabla u |^2)\Delta \chi -\frac{1}{2} |u|^2 \Delta^2 \chi  dx + \operatorname{Re} \int (G - \I) \nabla u \cdot \nabla(\bar u \Delta \chi) dx,
\end{aligned}
\end{equation*}
and 
\begin{equation*}
    \begin{aligned}
         {\rm II} &= \int 2  D^2 \chi \nabla u  \cdot \nabla \bar u - \left(\frac{1}{2} |u|^4 + |\nabla u|^2 \right) \Delta \chi + 2a \operatorname{Im}(u \nabla \bar u) \cdot \nabla \chi dx  \\
         & +   \int 2\operatorname{Re} ((G - \I) \nabla u \cdot D^2 \chi  \nabla \bar u )- \operatorname{Re}((G - \I) \nabla u \cdot \nabla \bar u \Delta \chi) - \operatorname{Re} \sum_{1 \leq i,j,k \leq 3} \partial_k G_{ij} \partial_j u \partial_i \bar u \partial_k \chi dx.
    \end{aligned}
\end{equation*}
We get the result by summing the two equations above and observing that for any $k, i,j\in\{ 1,2,3\}$, $ \operatorname{supp} \partial_k G_{i,j} \subset \operatorname{supp} (G -\I) $.
\end{proof}
\subsection{Proof of Proposition \ref{prpUnifEst}}  \label{ss:proof_unif}

\begin{proof}[Proof of Proposition \ref{prpUnifEst}] 
    We will show that there exists a constant $C > 0$ so that, for any $T>0$, if $u$ is solution to \eqref{eq:DNLS} with data $u_0$  defined on a time interval $[0, T]$, then
   $$
\sup_{t\in[0, T]} E[u(t)] + \Vert u \Vert^2_{L^2([0, T], L^2(B(0,R)))} \leq C (\Vert u_0 \Vert^2_{H^1(\mathbb R^3)} + \Vert u_0 \Vert_{L^4(\R^3)}^4).
$$
The conditional global well-posedness theory given as Lemma \ref{prop:cond_glob} then ends the proof.

    Observe that, from Lemma \ref{lm:GWPDef}, \eqref{eq:EnerEvol2}, using the fact that $G$ is positive
    $$
    E[u(t)] \leq E[u(0)] - \frac 12 \int_0^t \int |u|^2 \Delta_G a \, dx ds.
    $$
    Let now $\chi(x) := \sqrt{x^2 + 1}$, and define
    \begin{equation} \label{eq:def_lambda}
    \lambda(t) := \int_0^t \int (-\Delta^2 \chi) |u|^2 dx ds.
    \end{equation}
    As $-\Delta^2 \chi  = \frac{15}{\chi^7}> 0$ and $a$ is compactly supported, $|\Delta_G a| \lesssim -\Delta^2 \chi$ and thefore 
    \begin{equation} \label{eq:bdd_E_Lambda}
    E[u(t)] \leq E[u(0)] + C_0 \lambda(t),
    \end{equation}
    where $C_0>0$ is some constant depending on $a$ and $G$.
    We will show that 
    \begin{equation} \label{eq:Ubound_goal}
     \sup_{t>0}\lambda(t) \lesssim \Vert u_0 \Vert^2_{H^1(\mathbb R^3)} + \Vert u_0 \Vert_{L^4(\R^3)}^4 < \infty,   
    \end{equation}
    from which the result follows thanks to \eqref{eq:bdd_E_Lambda}.
    In order to do so, we let
    $$
    V(t) := \operatorname{Im}\int  u  \nabla \bar u \cdot \nabla \chi dx.
    $$
    By Cauchy-Schwartz inequality, the uniform bound on the mass and \eqref{eq:bdd_E_Lambda}
    \begin{align} \label{eq:Ubound_1}
    |V(t)| \lesssim& M[u(t)]^{\frac 12} E[u(t)]^{\frac 12} \nonumber \\
    \lesssim&  M[u(0)]^{\frac 12} \big(E[u(0)] + C_0 \lambda(t) \big)^{\frac 12}.
    \end{align}
    The plan is to show that, on the other hand, $V(t) \gtrsim \lambda(t) - C(M[u(0)]+E[u(0)])$, from which the result will follow.
    
    By Proposition \ref{prop:Mor},
    \begin{align}
    V(t) &\geq \frac 12 \int_0^t \int D^2 \chi \nabla u \cdot \nabla \bar u dx ds + \lambda(t) \nonumber - C_1
    \int_0^t \int_{\operatorname{supp}(G-\operatorname{I})}  |\nabla u|^2 + |u|^2 + |u|^4 dx ds\nonumber \\
    & - 2\int_0^t \int |a| |\nabla \chi | |u| |\nabla u| dx ds
    - C_2 M[u(0)]^{\frac 12} E[u(0)]^{\frac 12}, \label{eq:Ubound_2}
    \end{align}
    with $C_1, C_2 > 0$ constants depending only on $G$ (and our choice of $\chi$). 
    We first bound the $\int_{\operatorname{supp}(G-\operatorname{I})}$ error term. From Lemma \ref{lm:GWPDef}, \eqref{eq:EnerEvol1},
    \begin{align}
     \int_{0}^t  \int a (|u|^4 +(G\nabla u \cdot \nabla \bar u)) dx ds
         &= \operatorname{Re}  \int_{0}^t \int  (G \nabla u \cdot \bar u \nabla a) dx ds
        + E[u(0)] - E[u(t)] \nonumber \\
        &\leq   \int_{0}^t \int  |G| |\nabla u| |u| |\nabla a| dx ds
        + E[u(0)] \label{eq:Ubound_3}.
    \end{align}
    
    Observe that (see for example \cite[Lemma 4.4]{YaNiChe21}, \cite[Lemma 4.1]{BaMa23}),
    \begin{equation} \label{eq:Cepsi}
    \forall \epsilon >0, \; \exists C_\epsilon>0 \hspace{0.3cm}\text{ s.t. }\hspace{0.3cm}\forall x \in \mathbb R^3, \hspace{0.3cm}|\nabla a(x)| \leq C_\epsilon a(x) + \epsilon.
    \end{equation}
    Indeed, it suffices to show the above for $x \in \operatorname{supp}\nabla a$. By contradiction, if the claim fails, there exists $\epsilon_0>0$ and a sequence $x_n \in  \operatorname{supp}\nabla a$ so that $|\nabla a(x_n)| \geq n a(x_n) + \epsilon_0$. As $\operatorname{supp}\nabla a$ is compact, there exists a subsequence of $x_n$ converging to some $x_\infty \in \mathbb R^3$. As $a \geq 0$ and $\nabla a$ is bounded, we have necessarily $a(x_\infty) = 0$. In particular, $a$ attains a local minimum in $x_\infty$, hence $\nabla a(x_\infty) = 0$, which contradicts $|\nabla a(x_\infty)| \geq \epsilon_0 > 0$.

    Let $\epsilon >0$ to be fixed later, and $C_\epsilon>0$ be given by (\ref{eq:Cepsi}).
    Let now $\psi \in C^\infty_c(\mathbb R^3)$ be so that $\psi =1 $ on $\operatorname{supp}a$, $0\leq \psi \leq 1$, and $\psi$ is supported in
    $\operatorname{supp}a + B(0,1)$.
    By multiplying (\ref{eq:Cepsi}) by $\psi$, we get
    $$
    |\nabla a| \leq C_\epsilon a + \epsilon \psi.
    $$
    Now, from \eqref{eq:Ubound_3} and the above
\begin{align}
\int_{0}^t  \int a (|u|^4 +(G\nabla u \cdot \nabla \bar u)) dx ds  &\leq  {C_\epsilon}  \int_{0}^t \int |G| |\nabla u| |u| |a| dx ds + \epsilon \int_{0}^t \int |G| |\nabla u| |u| \psi dx ds + E[u(0)] \nonumber\\
&\leq {C_\epsilon}  \int_{0}^t \int |G| |\nabla u| |u| |a| dx ds  + \frac \epsilon 2 \int_{0}^t \int |G| |u|^2 \psi dx ds \\
 &\hspace{1cm}  + \frac \epsilon 2 \int_{0}^t \int |G| |\nabla u|^2 \psi dx ds  \nonumber + E[u(0)] \nonumber \\
& \leq {C_\epsilon}  \int_{0}^t \int |G| |\nabla u| |u| |a| dx ds + \epsilon \sup|G| C_3 \lambda(t)  \nonumber \\
 &\hspace{1cm} + \frac \epsilon 2 \int_{0}^t \int |G| |\nabla u|^2 \psi dx ds  + E[u(0)], \label{eq:Ubound_4}
\end{align}
where $C_3>0$ is a constant depending only on $\chi$ and $\psi$ (and hence $\operatorname{supp} a$). Recall also that, from the mass law (Lemma \ref{lm:GWPDef}, (\ref{eq:LocMasDcy}))
\begin{equation}\label{eq:Ubound_4b}
    \int_0^t \int a|u|^2 dx ds  \leq \frac 12 M[u(0)].
\end{equation}
Now, let $\delta_0 >0$ be so that
$$
\forall x \in \operatorname{supp}(G -  \operatorname{I}), \quad a(x) \geq \delta_0.
$$
Observe that
$$
\int_{\operatorname{supp}(G-\operatorname{I})}  |\nabla u|^2 + |u|^2 + |u|^4 dx \leq \frac{1}{\delta_0}  \int a(|\nabla u|^2 + |u|^2 + |u|^4) dx,
$$
and recall that there exists $c_{\rm coerc}>0$ so that $G \xi \cdot \xi \geq c_{\rm coerc} |\xi|^2$, hence denoting $\delta := \min(1, c_{\rm coerc}^{-1}) \delta_0$,
$$
\int_{\operatorname{supp}(G-\operatorname{I})}  |\nabla u|^2 + |u|^2 + |u|^4 dx \leq \frac{1}{\delta}  \int a((G\nabla u \cdot \nabla \bar u) + |u|^2 + |u|^4) dx.
$$
We therefore get, combining the above with \eqref{eq:Ubound_4}, \eqref{eq:Ubound_4b} and \eqref{eq:Ubound_2}
\begin{equation} \label{eq:Ubound_5}
    V(t) \geq {\rm I} + {\rm II},
\end{equation}
where, denoting
$$
A :=  C_1  \sup |G| \delta^{-1},
$$
we have
\begin{equation} \label{eq:Ubound_6}
{\rm I} = \lambda(t) - \epsilon A  C_3 \lambda(t),
\end{equation}
and
\begin{align} \label{eq:Ubound_7}
{\rm II} =&  \frac 12 \int_0^t \int D^2 \chi \nabla u \cdot \nabla \bar u dx ds  - \big(2 + {C_\epsilon A }\big)  \int_{0}^t \int  |\nabla u| |u| |a| dx ds - \frac \epsilon 2 A \int_{0}^t \int  |\nabla u|^2 \psi dx ds  \\
&- E[u(0)]  - C_2 M[u(0)]^{\frac 12} E[u(0)]^{\frac 12} - \frac 12 \delta^{-1} M[u(0)] \nonumber.
\end{align}

Observe that, as $D^2 \chi(x) = \frac{\operatorname{I}}{\chi(x)} - \frac{(x_i x_j)_{i,j}}{\chi(x)^3} \gtrsim \frac{\operatorname{I}}{\chi(x)^3}$ (in the sense of quadratic forms) and $\psi$ is compactly supported we have
\begin{equation} \label{eq:Ubound_8}
\int_{0}^t \int D^2 \chi \nabla u \cdot \nabla \bar u dx ds  \geq \eta \int_{0}^t \int |\nabla u|^2 \psi dx ds ,
\end{equation}
where $\eta = \eta(\chi, \psi)>0$ depends only on $\psi$ and $\chi$.
Let us now fix $\epsilon >0$ small enough so that
$$
\epsilon A   C_3 \leq \frac 12 \quad \text{and} \quad \epsilon A   \leq \frac 12 \eta.
$$
We get from \eqref{eq:Ubound_5}, \eqref{eq:Ubound_6}, \eqref{eq:Ubound_7} together with \eqref{eq:Ubound_8}
\begin{equation} \label{eq:Ubound_9}
{\rm I} \geq \frac 12 \lambda(t)
\end{equation}
and
\begin{align}  \label{eq:Ubound_10}
{\rm II} \geq& \frac 14 \eta \int_{0}^t \int |\nabla u|^2 \psi dx ds   - \big(2 + {C_\epsilon A}\big)  \int_{0}^t \int  |\nabla u| |u| |a| dx ds  \\
 & - E[u(0)]  - C_2 M[u(0)]^{\frac 12} E[u(0)]^{\frac 12} - \frac 12 \delta^{-1} M[u(0)] \nonumber.
\end{align}

We will now show that
\begin{equation}  \label{eq:Ubound_11}
{\rm II} \geq - C \times \big( M[u(0)] + E[u(0)]\big),
\end{equation}
for a universal constant $C>0$.
Indeed, by Cauchy-Schwartz inequality, then the mass law (Lemma \ref{lm:GWPDef}, \ref{eq:LocMasDcy})
\begin{align*}
  \int_{0}^t \int |\nabla u| |u| |a| dx ds &\leq \Big( \int_{0}^t \int a |\nabla u |^2 dx ds \Big)^{\frac 12} \Big( \int_{0}^t \int a |u|^2 dx ds \Big)^{\frac 12} \\ 
  &\leq \frac 12 M[u(0)]^{\frac 12} \Big( \int_{0}^t \int a |\nabla u |^2 dx ds  \Big)^{\frac 12},  
\end{align*}
hence, as $\psi = 1$ on the support of $a$,
\begin{equation}  \label{eq:Ubound_12}
\int_{0}^t \int |\nabla u| |u| |a|dx ds  \leq \frac 12 M[u(0)]^{\frac 12} \sup |a| \Big( \int_{0}^t \int  |\nabla u |^2 \psi dx ds  \Big)^{\frac 12}.
\end{equation}

Plugging the above in \eqref{eq:Ubound_10}, we obtain
$$
{\rm II} \geq F \Bigg(\Big(\int_{0}^t \int |\nabla u|^2 \psi \Big)^{\frac 12}\Bigg),
$$
where
\begin{align*}
  F(X) :=& \frac 14 \eta X^2 - \big(1 + \frac{C_\epsilon A}{2}\big)  M[u(0)]^{\frac 12} \sup |a| X \\
 &- E[u(0)]  
 - C_2 M[u(0)]^{\frac 12} E[u(0)]^{\frac 12} - \frac 12 \delta^{-1} M[u(0)].
\end{align*}
The fact that the parabola of equation $Y = F(X)$ is always above its vertex gives 
\eqref{eq:Ubound_11} (with $C>0$ depending on $A$, $\epsilon$, $\eta$, and $\sup |a|$).

Finally, combining \eqref{eq:Ubound_5} with \eqref{eq:Ubound_9}, \eqref{eq:Ubound_11} and \eqref{eq:Ubound_1}, we obtain
$$
\lambda(t)^2 - b \times \big( M[u(0)] + E[u(0)]\big) \lambda(t) - c \times \big( M[u(0)] + E[u(0)]\big)^2 \leq 0,
$$
with $b, c>0$, universal constants. The estimate (\ref{eq:Ubound_goal}), and hence the result, follows.
\end{proof}

\subsection{Local energy decay}  \label{ss:loc_dec}

\begin{cor} \label{cor:Ubound1}
    Assume that (\ref{eq:control}) holds.
Then, for any $R>0$ there is $C>0$ so that for any solution $u$ to \eqref{eq:DNLS}, 
\begin{equation}\label{eq:L2H1LocDec}
    \Vert u \Vert^2_{L^2(\mathbb R_+, H^1(B(0,R)))} \leq C(\Vert u_0 \Vert_{H^1(\mathbb R^3)}^2 + \Vert u_0 \Vert_{L^4(\R^3)}^4) < \infty.
\end{equation}
\end{cor}
\begin{proof}
Let $T>0$. Observe that, by Lemma \ref{lm:GWPDef}, \eqref{eq:EnerEvol2} 
$$
    \int_0^T \int a(|\nabla u|^2 +  |u|^{4})  \, dx \,ds = E[u_0] - E[u(T)] - \frac{1}{2}   \int_0^T \int (\Delta_G a) |u|^2 \, dx \,ds,
$$   
hence Proposition \ref{prpUnifEst} implies that
$$
    \int_0^\infty \int a(|\nabla u|^2 +  |u|^{4})  \, dx \,ds  
\lesssim \Vert u_0 \Vert_{H^1(\mathbb R^3)}^2 + \Vert u_0 \Vert_{L^4(\R^3)}^4 < +\infty.
$$    
Now, with $\chi \in C^\infty$  defined as in the proof of Proposition \ref{prpUnifEst}, observe that  (\ref{eq:Ubound_2}) together with  (\ref{eq:Ubound_goal}), the above and the fact that $a(x) \geq \delta_0 > 0$ in $\operatorname{supp}(G-\operatorname{I})$ yields
$$
    \int_0^\infty \int D^2 \chi \nabla u \cdot \nabla u  \, dx \,ds  
\lesssim \Vert u_0 \Vert_{H^1(\mathbb R^3)}^2 + \Vert u_0 \Vert_{L^4(\R^3)}^4.
$$
The result follows as $D^2 \chi \geq 0$ and $D^2 \chi \geq C_R$ in $B(0,R)$.
\end{proof}

\begin{cor} \label{cor:Ubound2}
    Assume that (\ref{eq:control}) holds.
Then, for any solution u to \eqref{eq:DNLS} and any $R>0$
$$
\Vert u(t) \Vert_{H^{s}(B(0,R))} \rightarrow 0.
$$
for any $s \in [0,1)$
\end{cor}
\begin{proof}
Let $\chi \in C^\infty_c(\R^3)$ be arbitrary.
    Observe that, by Corollary \ref{cor:Ubound1}, there exists $t_n \to +\infty$ so that
    $$
    \Vert \chi u(t_n) \Vert_{H^1(\mathbb R^3)} \to 0.
    $$
    But
    $$
    \partial_t \Vert \chi u \Vert_{L^2(\R^3)}^2 = - \int \chi^2 a |u|^2 dx - \operatorname{Im} \int  G\nabla \chi^2 \cdot \nabla u \bar u dx,
    $$
    from which, for any $t\geq t_n$
    $$
     \Vert \chi u(t) \Vert_{L^2(\R^3)}^2 \leq \Vert \chi u(t_n) \Vert_{L^2(\R^3)}^2 + \int_{t_n}^t \int |G\nabla \chi^2 \cdot \nabla u \bar u|dx ds .
    $$
    In particular, taking $\widetilde \chi \in C^\infty_c(\R^3)$ so that $\widetilde \chi = 1$ on $\operatorname{supp}\chi$
        $$
     \Vert \chi u(t) \Vert_{L^2(\R^3)}^2 \leq \Vert \chi u(t_n) \Vert_{L^2(\R^3)}^2 + C \int_{t_n}^{\infty} \int \widetilde \chi (|\nabla u|^2 + |u|^2)dx ds .
    $$
    The above goes to zero as $n \to \infty$ thanks to Corollary \ref{cor:Ubound1}, therefore
    $$
    \Vert \chi u(t) \Vert_{L^2(\R^3)} \to 0.
    $$
    as $t \to \infty$. The result follows by interpolation: indeed, by Plancherel, for $t\geq 0$, using the fact that $u$ is bounded in $H^1(\mathbb R^3)$ on $\mathbb R_+$
    $$
    \Vert D^{\frac 12} (\chi u) \Vert^2_{L^2(\R^3)} \leq \Vert \nabla(\chi u)\Vert_{L^2(\R^3)} \Vert \chi u\Vert_{L^2(\R^3)} \lesssim \Vert \widetilde \chi u \Vert_{L^2(\R^3)} \to 0,
    $$
thus
    $$
        \Vert D^{ s} (\chi u) \Vert^2_{L^2(\R^3)} \leq \Vert \nabla( \chi  u) \Vert^{2s}_{L^2(\R^3)} \Vert \chi u\Vert^{2(1-s)}_{L^2(\R^3)} ,
    $$
    so  we get $\Vert D^{s} (\chi u) \Vert_{L^2(\R^3)} \to 0$ for any $0\leq s<1$.
    \end{proof}

    \begin{remark}[The linear case] \label{rk:lin_dec}
        Strictly analogous computations as the ones presented in this Section can be carried out in the linear case. This shows local energy decay for the linear damped flow $S(t)$, in the form that $\Vert S(t) u_0 \Vert_{L^2(\mathbb R_+, H^1(\mathbb R^3))} \lesssim \Vert u_0 \Vert_{H^1}$, and hence provide an alternative proof of the result of \cite{AlKh07} in our setting, under the (strongest) control condition (\ref{eq:control}).
    \end{remark}

\section{Bilinear estimates} \label{s:bil}
The purpose of this section is to show the following 
\begin{prop} \label{prop:L4L4}
    Assume that (\ref{eq:control}) holds. Then, for any $\epsilon>0$, any solution $u\in C([0,\infty), H^{1+\epsilon}(\R^3))$ to \eqref{eq:DNLS} verifies 
    \begin{equation}\label{eq:GlStrEst}
        \int_0^\infty \int |u(x,t)|^4 dx dt \lesssim 1.
    \end{equation}
\end{prop}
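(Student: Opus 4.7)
The strategy is to run a bilinear (interaction) Morawetz computation perturbatively around $G=\I$, so that every error generated by the perturbation is spatially supported in $\operatorname{supp}(G-\I)$, a compact set on which the local energy decay of Corollary \ref{cor:Ubound1} is at our disposal. Concretely, I would introduce the standard 3D interaction quantity
\begin{equation*}
M(t) := 2\iint \frac{x-y}{|x-y|}\cdot \operatorname{Im}\bigl(\bar u(t,y)\nabla u(t,y)\bigr)\,|u(t,x)|^2\,dx\,dy,
\end{equation*}
whose structure is chosen so that $-\Delta_y|x-y|$ produces, up to a multiplicative constant, the coercive term $\int |u|^4$ (through the identity $\Delta^2|x|\sim \delta_0$ in $\R^3$) and $D^2_y|x-y|$ produces a nonnegative kinetic interaction term. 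By Cauchy--Schwarz combined with the uniform $H^1$ bound from Proposition \ref{prpUnifEst},
\begin{equation*}
|M(t)|\lesssim \|u(t)\|_{L^2(\R^3)}^3\,\|\nabla u(t)\|_{L^2(\R^3)} \lesssim \|u_0\|_{H^1}^3\bigl(\|u_0\|_{H^1}^2+\|u_0\|_{L^4}^4\bigr)^{1/2},
\end{equation*}
uniformly in $t\geq 0$, so the boundary contribution $M(T)-M(0)$ will not obstruct the desired space-time bound.

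The heart of the argument is to differentiate $M(t)$ in time along solutions to \eqref{eq:DNLS} and integrate from $0$ to $T$. For $G=\I$ and $a=0$ this is the classical CKSTT/Planchon--Vega computation yielding $\int_0^T \int |u|^4\,dx\,dt$ plus a nonnegative kinetic interaction. In our setting, writing $\Delta_G = \Delta + \operatorname{div}((G-\I)\nabla\,\cdot\,)$ and isolating the damping term, the additional contributions fall into two families: (i) integrands pointwise supported in $\operatorname{supp}(G-\I)$ through the factor $G-\I$ or $\nabla G$, involving $|\nabla u(x)|^2$, $|u(x)|^2$ or $|u(x)|^4$, multiplied by the uniformly bounded kernel $\nabla|x-y|$ and by $|u(y)|^2$ or $|u(y)||\nabla u(y)|$ at the complementary point; and (ii) damping contributions coming with a factor $a(x)$ or $a(y)$, arising from the mass non-conservation $\partial_t|u|^2 = \operatorname{div}(2\operatorname{Im}(\bar u\,G\nabla u))- 2a|u|^2$ and from the analogous identity for the momentum density. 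Integrating out the non-localized variable using the uniform $L^2$ and $H^1$ bounds of Proposition \ref{prpUnifEst}, both families reduce to integrals of the shape
\begin{equation*}
\int_0^T \int_{\operatorname{supp}(G-\I)}\bigl(|\nabla u|^2 + |u|^2 + |u|^4\bigr)\,dx\,dt \quad \text{and} \quad \int_0^T \int a\bigl(|\nabla u|^2 + |u|^2\bigr)\,dx\,dt.
\end{equation*}
The first is finite by Corollary \ref{cor:Ubound1}, the $|u|^4$ piece being absorbed via the interpolation $\|u\|_{L^4(K)}^4\lesssim \|u\|_{L^2(K)}^2\|u\|_{H^1(K)}^2$ on the compact set $K=\operatorname{supp}(G-\I)$; the second by the mass/energy laws \eqref{eq:LocMasDcy} and \eqref{eq:EnerEvol2} together with Proposition \ref{prpUnifEst}. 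Cross terms of the form $\iint \nabla|x-y|\cdot \operatorname{Im}(\bar u\nabla u)(y)\, a(x)|u(x)|^2\,dx\,dy$ are handled by Cauchy--Schwarz in $y$, using $\|u\|_{L^2}\|\nabla u\|_{L^2}\lesssim 1$, against $\int a|u|^2$, which is integrable in time by \eqref{eq:LocMasDcy}.

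The main obstacle I expect is the careful bookkeeping of the errors generated by the fact that the perturbation breaks the translation invariance only on one side of the interaction kernel at a time: depending on whether the $x$- or the $y$-derivative hits the $G-\I$ or $a$ factor, the resulting term is local in one variable and must be paired with the appropriate global bound (uniform mass, energy, or the $L^2_tL^2_x$ local decay) in the other. Once this bookkeeping is settled, each term is bounded uniformly in $T$ by a constant depending only on $\|u_0\|_{H^1}^2+\|u_0\|_{L^4}^4$, hence we obtain $\int_0^T \int |u|^4\,dx\,dt\lesssim 1$ uniformly in $T$ and \eqref{eq:GlStrEst} follows by letting $T\to \infty$. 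No ingredient beyond the uniform energy bound (Proposition \ref{prpUnifEst}), the local energy decay (Corollary \ref{cor:Ubound1}), and the damping-absorption encoded in the mass and energy laws is required.
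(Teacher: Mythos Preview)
Your strategy is exactly the paper's: the same interaction Morawetz functional with weight $|x-y|$, the same perturbative splitting $\Delta_G=\Delta+\operatorname{div}((G-\I)\nabla\cdot)$, the same control of boundary terms via the uniform $H^1$ bound, and the same endgame through Corollary~\ref{cor:Ubound1} and the mass/energy laws.

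One point deserves correction, however. You write that the perturbative errors are ``multiplied by the uniformly bounded kernel $\nabla|x-y|$'', and then integrate out the non-localized variable using only the uniform $L^2$ and $H^1$ bounds. This is not accurate: when the time derivative hits the momentum density $\operatorname{Im}(\bar u\nabla u)$ and one integrates by parts, several error terms carry instead the \emph{singular} kernels $D^2|x-y|\sim|x-y|^{-1}$, $\Delta|x-y|=2|x-y|^{-1}$, or even $\nabla\Delta|x-y|\sim|x-y|^{-2}$. For these, integrating out the non-localized variable requires Hardy's inequality $\bigl\||x-y|^{-1}u\bigr\|_{L^2_y}\lesssim\|\nabla u\|_{L^2}$, not an $L^\infty$ bound on the kernel. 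The paper invokes Hardy repeatedly at exactly these places (terms $\mathrm{Ic1}$, $\mathrm{IIb2}$, $\mathrm{IIc2}$, $\mathrm{IIIc1}$, $\mathrm{IIIc2}$). Once Hardy is inserted, each such term is indeed bounded by $\|u\|_{H^1}^2\int_{\operatorname{supp}(G-\I)}(|\nabla u|^2+|u|^2)$ as you claim, and your bookkeeping closes. Incidentally, no localized $|u|^4$ error actually arises (the nonlinear contributions all come with the correct sign), so that part of your control is unnecessary, though harmless.
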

\begin{proof}
The overall idea is to perform a bilinear virial computation similar to the one known in $\mathbb R^3$ (see for example \cite[Proposition $2.5$]{CoKeStTaTa04}), perturbatively with respect to $G= \operatorname{I}$, and to handle the error terms, localized where $G \neq \operatorname{I}$, thanks to Corollary \ref{cor:Ubound1}. To this purpose, for $x \in \mathbb R^3$, define $\rho(x) := |x|$,
and
\begin{equation*}
    B(t) := \int |u(y)|^2 \int \operatorname{Im}(\bar u(x) \nabla u(x)) \cdot \nabla \rho(x-y)dx dy.
\end{equation*}
Observe for later use that
$$
\nabla \rho = \frac{x}{|x|}, \hspace{0.3cm} \Delta \rho = \frac{2}{|x|}, \hspace{0.3cm} \nabla \Delta \rho = \frac{2x}{|x|^3}, \hspace{0.3cm} \Delta^2 \rho = -8\pi \delta_{x=0}.
$$

We have
\begin{equation}\label{eq:Bil3Terms}
    \begin{aligned}
        \frac{d}{dt} B(t) &= 2 \iint  \operatorname{Re}(\bar u(y) \partial_t u(y)) \operatorname{Im}(\bar u(x) \nabla u(x)) \cdot \nabla \rho(x-y) dx dy \\ 
        &+ \iint |u(y)|^2 \operatorname{Im}(\partial_t \bar u(x) \nabla u (x)) \cdot \nabla \rho(x-y) dx dy \\
        &+ \iint |u(y)|^2 \operatorname{Im}( \bar u(x) \nabla \partial_t u (x)) \cdot \nabla \rho(x-y) dx dy \\
        & = 2 \iint \operatorname{Re}(\bar u(y) \partial_t u(y)) \operatorname{Im}(\bar u(x) \nabla u(x)) \cdot \nabla \rho(x-y) dx dy \\
        & - \iint |u(y)|^2 \operatorname{Im}(\bar u(x) \partial_t u(x)) \Delta \rho(x-y) dx dy \\
        &- 2 \iint |u(y) |^2 \operatorname{Im}(\partial_t u(x) \nabla \bar u(x)) \cdot \nabla \rho(x-y) dx dy \\
        &=: {\rm I} + {\rm II} + {\rm III}.
    \end{aligned}
\end{equation}
Using equation \eqref{eq:DNLS}, we compute the three quantities above separately. We have 
\begin{equation*}
    \begin{aligned}
        \frac{1}{2} {\rm I} &=  \iint  \operatorname{Re}(\bar u(y) (i\Delta_G u(y) - i |u(y)|^{4} u(y) - a(y) u(y))) \operatorname{Im}(\bar u(x) \nabla u(x)) \cdot \nabla \rho(x-y) dx dy \\
        &= - \iint a(y) |u(y)|^2 \operatorname{Im}(\bar u(x) \nabla u(x)) \cdot \nabla \rho(x-y) dx dy \\
        & + \iint \operatorname{Im}(\bar u(x) \nabla u(x)) \cdot \nabla \rho(x-y) \operatorname{Im}(\nabla \bar u(y) \cdot (G(y)\nabla u(y)) dx dy \\
        & + \iint \operatorname{Im} ( \bar u(y) (G(y) \nabla u(y)) \cdot \nabla_y \big(\operatorname{Im}(\bar u (x) \nabla u(x)) \cdot \nabla \rho(x-y)\big) dx dy \\
        &=: {\rm I a}+ {\rm Ib} + {\rm Ic}.
    \end{aligned}
\end{equation*}
For the first term, we observe that
\begin{equation*}
    \begin{aligned}
        |{\rm Ia}| &\leq \iint a(y) |u(x)| |\nabla u(x)| \nabla \rho (x - y) |u(y)|^2 dx dy \\
        & \lesssim  \| u \|_{H^\frac{1}{2}}^2 \int a(y) |u(y)|^2 dy
    \end{aligned}
\end{equation*}
as long as $|\nabla \rho (x-y)| \leq C < \infty$. The second term ${\rm Ib}$ equals zero since $G$ is symmetric. 
For the last term, we split the integral as 
\begin{equation*}
    \begin{aligned}
        {\rm Ic} &= \iint \operatorname{Im} ( \bar u(y) ((G(y) - \I)\nabla u(y)) \cdot \nabla_y \big(\operatorname{Im}(\bar u (x) \nabla u(x)) \cdot \nabla \rho(x-y)\big) dx dy \\
        &+ \iint\operatorname{Im} ( \bar u(y) \nabla u(y)) \cdot \nabla_y \big(\operatorname{Im}(\bar u (x) \nabla u(x)) \cdot \nabla \rho(x-y)\big) dx dy \\
        &=: {\rm Ic1} + {\rm Ic2}.
    \end{aligned}
\end{equation*}
Here, in the first term, $y$ is localized in space, and we compute by the Cauchy-Schwartz inequality
\begin{equation*}
    \begin{aligned}
        |{\rm Ic1}| &\lesssim \iint_{y \in \operatorname{supp}(G - \operatorname{I})}  |D^2 \rho(x - y)| | u(x)| |\nabla u(x)| |u(y) |\nabla u(y)|   dx dy \\ 
        &\leq \int_{y \in \operatorname{supp}(G - \operatorname{I})}  \| D^2 \rho(. - y)  u \|_{L^2} \|  \nabla u \|_{L^2} |u(y) |\nabla u(y)| dy \\ 
        & \lesssim  \|  \nabla u \|_{L^2}^2 \int_{y \in \operatorname{supp}(G - \operatorname{I})}   |u(y) |\nabla u(y)| dy \\
        & \lesssim \|  \nabla u \|_{L^2}^2 \int_{y \in \operatorname{supp}(G - \operatorname{I})} |\nabla u(y)|^2 + | u(y)|^2 dy,
    \end{aligned}
\end{equation*}
where we used that, thanks to  the Hardy inequality, $\| D^2 \rho(. - y)  u \|_{L^2} \lesssim \| \frac{1}{|x|}  u \|_{L^2} \lesssim \|  \nabla u \|_{L^2}$. For the term ${\rm Ic2}$, 

from our choice of $\rho$, we obtain, similarly to \cite[Proposition $2.5$]{CoKeStTaTa04},  
\begin{equation*}
\begin{aligned}
        {\rm Ic2} & \geq -  \iint |u(x)| |\nabla u(x)| |u(y)| |\nabla u(y)| \frac{dxdy}{|x-y|} \\
         & \geq - \frac 12 \iint | u(y)|^2 |\nabla u(x) |^2 \frac{dxdy}{|x-y|}.
\end{aligned}
\end{equation*}
For the second term in \eqref{eq:Bil3Terms}, we use equation \eqref{eq:DNLS} to obtain

\begin{equation*}
    \begin{aligned}
        {\rm II} &= \iint |u(y)|^2 |u(x)|^{4} \Delta \rho(x-y) dx dy + \iint |u(y)|^2 \operatorname{Re}( \nabla \bar u(x) \cdot (G(x) \nabla u(x))) \Delta \rho(x-y) dx dy \\ 
        & + \iint |u(y)|^2 \operatorname{Re}(\bar u (x) (G(x) \nabla u(x) ) \cdot \nabla_x \Delta \rho(x - y) dx dy  \\
        & =:  {\rm IIa} + {\rm IIb} + {\rm IIc}.
    \end{aligned}
\end{equation*}
We split
$$
\begin{aligned}
{ \rm IIb } &= \iint |u(y)|^2  |\nabla \bar u(x)|^2 \Delta \rho(x-y) dx dy
+
\iint |u(y)|^2 \nabla \bar u(x) \cdot (G(x) - \I) \nabla u(x) \Delta \rho(x-y) dx dy \\
&=: { \rm IIb1 } + { \rm IIb2 }
\end{aligned}
$$
Now we observe that, with our choice of $\rho$, 
$$
{ \rm IIb1} + 2 {\rm Ic2} \geq \frac{1}{2}{\rm IIb1},
$$
and, from Hardy inequality, similar to the above
$$
|{ \rm IIb2}|  \lesssim \| u \|_{H^1(\mathbb R^3)}^2 \int_{x \in \operatorname{supp}(G-\operatorname{I})} ( |u(x)|^2 + |\nabla u(x)|^2) dx.
$$ 
For the third term in the equation above, we separate the integral in $x$ and get
\begin{equation*}
    \begin{aligned}
        {\rm IIc} &= - \frac{1}{2} \iint |u(y)|^2 |u(x)|^2 \Delta \Delta \rho(x-y) dx dy \\
        & + \iint  |u(y)|^2 \operatorname{Re}(\bar u (x) ((G(x) - \I) \nabla u(x) ) \cdot \nabla_x \Delta \rho(x - y) dx dy \\
        & =: {\rm IIc1} + {\rm IIc2},
    \end{aligned}
\end{equation*}
and for the last term above we have 
\begin{equation*}
    \begin{aligned}
        |{\rm IIc2}| &\leq \int_{x \in \operatorname{supp}(G-\operatorname{I})} |u(x)| |(G(x) - \I) \nabla u(x)|  \int |u(y)|^2 |\nabla \Delta \rho(x - y)| dy dx \\ 
       & \lesssim \| u \|_{H^1(\mathbb R^3)}^2 \int_{x \in \operatorname{supp}(G-\operatorname{I})} ( |u(x)|^2 + |\nabla u(x)|^2) dx,
    \end{aligned}
\end{equation*}
due to the Hardy inequality. 

Now we deal with the third term in \eqref{eq:Bil3Terms}. By using \eqref{eq:DNLS} we obtain 
\begin{equation*}
    \begin{aligned}
        \frac{1}{2} {\rm III} & = \iint |u(y)|^2 a(x) \operatorname{Im}(\nabla \bar u(x) u(x)) \cdot \nabla \rho(x-y) dx dy \\
        & - \frac{1}{4} \iint |u(y)|^2 |u(x)|^{4} \Delta \rho(x-y) dx dy \\ 
        & + \iint |u(y)|^2 \operatorname{Re} ((G(x) \nabla u(x)) \cdot \nabla_x( \nabla \bar u(x) \cdot \nabla \rho(x-y)) dx dy \\
        & =: {\rm IIIa} + {\rm IIIb} + {\rm IIIc}.
    \end{aligned}
\end{equation*} 
For the first term in the above equation, we observe that 
\begin{equation*}
    |{\rm IIIa}| \leq \frac{1}{2}\| u \|_{L^2}^2 \int a(x) (|u(x)|^2 + |\nabla u(x)|^2 ) dx.
\end{equation*}
For the last term,  
\begin{equation*}
    \begin{aligned}
        {\rm IIIc} & = \iint |u(y)|^2 \operatorname{Re}(G(x) \nabla u(x)) \cdot D^2 \rho(x - y)  \nabla u(x) dx dy \\ 
        & + \operatorname{Re} \iint |u(y)|^2 \sum_{i,j,k}^3 G_{i,j}(x) \partial_j u(x) \partial_{i,k} \bar u(x) \partial_k \rho(x-y) dx dy \\ 
        & =: {\rm IIIc1} + {\rm IIIc2}. 
    \end{aligned}
\end{equation*}
From Hardy's inequality, we get
$$
{\rm IIIc1} = \iint |u(y)|^2 \nabla u(x) \cdot D^2 \rho(x - y)  \nabla u(x) dx dy + {\rm IIIc1(ii)},
$$
with
$$
|{\rm IIIc1(ii)}| \lesssim \| u \|_{L^2}^2 \int_{x \in \operatorname{supp}(G - \operatorname{I})} |\nabla u(x)|^2 dx.
$$
For the term ${\rm IIIc2}$, we observe that by integration by parts 
\begin{equation*}
    \begin{aligned}
        {\rm IIIc2} &= - \operatorname{Re} \iint |u(y)|^2 \sum_{i,j,k}^3  \partial_k (G_{i,j}(x))  \partial_j u(x) \partial_{i} \bar u(x) \partial_k \rho(x-y) dx dy \\
        &- \iint |u(y)|^2 \operatorname{Re} ( \nabla \bar u(x) \cdot G(x) \nabla u(x)) \Delta \rho(x - y)  dx dy \\
        & - \iint |u(y)|^2 \sum_{i,j,k}^3  G_{i,j}(x) \partial_{j,k} u(x) \partial_{i} \bar u(x) \partial_k \rho(x-y) dx dy. 
    \end{aligned}
\end{equation*}
The last term on the equation's right-hand side above equals ${\rm IIIc2}$. This becomes evident if one swaps the indexes $i$ and $j$ and later observes that $G_{i,j} = G_{j, i}$ as $G$ is symmetric.  Thus we obtain 
\begin{equation*}
    \begin{aligned}
        {\rm IIIc2} &= -\frac{1}{2} \operatorname{Re} \iint |u(y)|^2 \sum_{i,j,k}^3  \partial_k (G_{i,j}(x))  \partial_j u(x) \partial_{i} \bar u(x) \partial_k \rho(x-y) dx dy \\
        &- \frac{1}{2} \iint |u(y)|^2 \operatorname{Re} ( \nabla \bar u(x) \cdot (G(x)-\I) \nabla u(x)) \Delta \rho(x - y)  dx dy  \\ 
        &\- \frac{1}{2} \iint |u(y)|^2  |\nabla \bar u(x)|^2 \Delta \rho(x - y)  dx dy  \\
        & =: {\rm IIIc2(i)} +  {\rm IIIc2(ii)}+ {\rm IIIc2(iii)}.
    \end{aligned}
\end{equation*}
Now we observe that ${\rm IIIc2(i)}$ is localized in space in the $x$ variable as $\partial_k G_{i,j}(x) \equiv 0 $ when $x \notin \operatorname{supp}(G - \operatorname{I})$. So we get, using Hardy's inequality for $ {\rm IIIc2(ii)}$ we obtain
\begin{equation*}
    |{\rm IIIc2(i)}|+|{\rm IIIc2(ii)}| \lesssim \| u \|_{L^2}^2 \int_{x \in \operatorname{supp}(G - \operatorname{I})} |\nabla u(x)|^2 dx.
\end{equation*}
Putting together every term above, we reach the following conclusion: there exists a universal constant $C >0$ such that 

\begin{equation*}
    \begin{aligned}
        C \| u \|_{H^1(\mathbb R^3)}^2 \int_{\operatorname{supp} a} |\nabla u |^2 + |u|^2 dx + \frac{d}{dt} B(t)  & \geq \frac{1}{2} \iint |u(y)|^2 |\nabla \bar u(x)|^2  \Delta \rho(x-y) dxdy \\
        & + 4\pi \int |u|^4 dx \\
        & + \frac{3}{4} \iint |u(y)^2 |u(x)|^{4} \Delta \rho(x - y) dx dy \\ 
        & + \iint
        |u(y)|^2 D^2\rho (x - y)\nabla \bar u(x) \cdot \nabla u(x) dx dy.
    \end{aligned}
\end{equation*}
From the positivity of $D^2\rho$, every term on the right-hand side above is non-negative. Thus, after integration in time, we obtain 
\begin{equation*}
\begin{aligned}
     4\pi \int_0^T \int |u|^4 dx dt & \leq C \| u \|_{H^1(\mathbb R^3)}^2 \int_0^T \int_{\operatorname{supp} a} |\nabla u |^2 + |u|^2 dx dt + B(T)  - B(0) \\
     & \leq C + \| u (T) \|_{L^2(\R^3)}^2 \| u(T) \|_{\dot{H}^\frac{1}{2}(\R^3)}^2 - \| u (0) \|_{L^2(\R^3)}^2 \| u(0) \|_{\dot{H}^\frac{1}{2}(\R^3)}^2,
\end{aligned}
\end{equation*}
where the last inequality follows from Corollary \ref{cor:Ubound1}. From the uniform boundedness of the $H^1$-norm given by Proposition \ref{prpUnifEst}, we thus obtain \eqref{eq:GlStrEst}.
\end{proof}

    \begin{remark}[Linear Stichartz estimates]
    Similarly to Remark \ref{rk:lin_dec}, the same computation can be performed in the linear case. This shows that the damped linear flow $S(t)$ verifies the global Strichartz-type estimate $\Vert S(t) u_0 \Vert_{L^4(\R_+, L^4(\R^3))} \lesssim \Vert u_0 \Vert_{H^1(\R ^3)}$. As the scale-invariant estimate is $\Vert e^{it\Delta} u_0 \Vert_{L^4(\R_+, L^4(\R^3))} \lesssim \Vert u_0 \Vert_{H^{\frac 14}(\R^3)}$, this corresponds to a loss of $\frac 34 = \frac 14 + \frac 12 (= \frac 1p + \frac 12)$ derivatives, which is consistent with Remark \ref{rk:dram}.
\end{remark}

\section{End of the proof}\label{secEnd}
\begin{proof}[Proof of Theorem \ref{thm:ScatDef}]
We first show scattering in $H^{\frac 12}$, then upgrade it to $H^s$ for any $s<1$ by an interpolation argument.

    Let $\chi \in C^\infty_c(\R^3)$ be so that $\chi = 1$ on $\operatorname{supp} a$. We let
    $$
w := (1-\chi)u.
    $$
    Then, using the fact that $1-\chi = 0$ on $\operatorname{supp} a$ and hence also on $\operatorname{supp} (G-\operatorname{I})$, $w$ solves the equation
    $$
    i\partial_t w - \Delta w = [\Delta, \chi] u - (1-\chi)u|u|^{2},
    $$
    hence from Duhamel's formula
    $$
    w(t) = e^{it \Delta}\Big( (1-\chi) u_0 + i \int_0^t e^{-is\Delta} (1-\chi)u|u|^{2}(s) \, ds - i \int_0^t e^{-is\Delta}[\Delta, \chi] u(s) \, ds \Big).
    $$
    Therefore, as $\Vert \chi u(t) \Vert_{H^{\frac 12}(\R^3)} \to 0$ thanks to Corollary \ref{cor:Ubound2}, in order to conclude, it suffices to show that
    \begin{equation} \label{eq:scat_crit_I1}
            \Big\Vert  \int_t^\infty e^{-is\Delta} (1-\chi)u|u|^{2}(s) \, ds \Big\Vert_{H^{\frac 12}(\mathbb R^3)} \to 0 \hspace{0.3cm}\text{ as }\hspace{0.3cm}t\to +\infty
    \end{equation}
    and
    \begin{equation}\label{eq:scat_crit_I2}
            \Big\Vert  \int_t^\infty e^{-is\Delta}[\Delta, \chi] u(s) ds \Big\Vert_{H^{\frac 12}(\mathbb R^3)} \to 0 \hspace{0.3cm}\text{ as }\hspace{0.3cm}t\to +\infty,
    \end{equation}
    indeed, the result  then follows by setting
    $$
    u_+ :=  (1-\chi) u_0 + i \int_0^\infty e^{-is\Delta} (1-\chi)u|u|^{2}(s) \, ds - i \int_0^\infty e^{-is\Delta}[\Delta, \chi] u(s) \, ds.
    $$
    First, observe that (\ref{eq:scat_crit_I2}) is a consequence of Corollary \ref{cor:Ubound1}, thanks to the dual estimate to the smoothing effect for the free Schrödinger flow in $\mathbb R^3$. More precisely, selecting $\widetilde \chi \in C^\infty_c(\mathbb R^3)$ so that $\widetilde \chi = 1$ on $\operatorname{supp} \chi$ and arguing by duality,
    \begin{align*}
        &\sup_{\substack{F \in L^2(\mathbb R^3), \\ \Vert F \Vert_{L^2(\R^3)} \leq 1}} \big\langle F, \langle D \rangle^{\frac 12} \int_t^\infty e^{-is\Delta}[\Delta, \chi] u(s) ds \big\rangle_{L^2} = \sup_{\substack{F \in L^2(\mathbb R^3), \\ \Vert  F \Vert_{L^2(\R^3)} \leq 1}} \big\langle \langle D \rangle^{\frac 12} F,  \int_t^\infty e^{-is\Delta}[\Delta, \chi] u(s) ds \big\rangle_{L^2} \\
        &\hspace{1cm} = \sup_{\substack{F \in L^2(\mathbb R^3), \\ \Vert  F \Vert_{L^2(\R^3)} \leq 1}} \big\langle \langle D \rangle^{\frac 12} F,  \int_t^\infty e^{-is\Delta}\widetilde \chi [\Delta, \chi] u(s) ds \big\rangle_{L^2} \\
        &\hspace{1cm}= \sup_{\substack{F \in L^2(\mathbb R^3), \\ \Vert  F \Vert_{L^2(\R^3)} \leq 1}} \big\langle \widetilde \chi e^{is\Delta}\langle D \rangle^{\frac 12} F,   [\Delta, \chi] u(s)  \big\rangle_{L_s^2((t, +\infty), L^2(\mathbb R^3))} \\
        &\hspace{1cm}\leq \sup_{\substack{F \in L^2(\mathbb R^3), \\ \Vert  F \Vert_{L^2(\R^3)} \leq 1}} \Vert \widetilde \chi e^{is\Delta}\langle D \rangle^{\frac 12} F \Vert_{L_s^2((t, +\infty), L^2(\mathbb R^3))} \Vert  [\Delta, \chi] u(s)  \Vert _{L_s^2((t, +\infty), L^2(\mathbb R^3))} \\
        &\hspace{1cm}\lesssim \Vert  [\Delta, \chi] u(s)  \Vert_{L_s^2((t, +\infty), L^2(\mathbb R^3))} \lesssim \Vert \widetilde \chi u(s)\Vert_{L_s^2((t, +\infty),H^1(\mathbb R^3))} \to 0,
    \end{align*}
    where we used Kato's smoothing effect in the last line, then Corollary \ref{cor:Ubound1} to conclude.
   
    Finally, to show (\ref{eq:scat_crit_I1}), observe that, thanks to Proposition \ref{prop:L4L4}
     \begin{equation} \label{eq:W11}
     \Vert u|u|^2 \Vert_{L^2(\R_+, W^{1, 1}(\R^3))} \leq \Vert u \Vert_{L^\infty(\R_+, H^1(\mathbb R^3))} \Vert u \Vert^2_{L^4(\R_+, L^4(\R^3))} < +\infty.
    \end{equation}
    But we have the Sobolev embedding $W^{1, 1} (\R^3)\hookrightarrow W^{\frac 12, \frac {6}{5}}(\R^3)$, and $(2, \frac {6}{5})$ is the dual of the (endpoint) Strichartz pair $(2, 6)$ for the free Schrödinger flow in $\mathbb R^3$. Therefore, 
     an application of the (dual) Strichartz estimate in $\mathbb R^3$ gives the result: more precisely, arguing by duality again
         \begin{align*}
        &\sup_{\substack{F \in L^2(\mathbb R^3), \\ \Vert F \Vert_{L^2(\R^3)} \leq 1}} \big\langle F, \langle D \rangle^{\frac 12} \int_t^\infty e^{-is\Delta} (1-\chi)u|u|^{2}(s) \, ds \big\rangle_{L^2} \\
        &\hspace{1cm}= \sup_{\substack{F \in L^2(\mathbb R^3), \\ \Vert  F \Vert_{L^2(\R^3)} \leq 1}} \big\langle e^{is\Delta} F,  \langle D \rangle^{\frac 12}(1-\chi)u|u|^{2} \big\rangle_{L_s^2((t, +\infty), L^2(\mathbb R^3))} \\
        &\hspace{1cm}\leq \sup_{\substack{F \in L^2(\mathbb R^3), \\ \Vert  F \Vert_{L^2(\R^3)} \leq 1}} \Vert  e^{is\Delta} F \Vert_{L^2((t, +\infty), L^6(\R^3))} \Vert  \langle D \rangle^{\frac 12} (1-\chi)u|u|^{2}  \Vert _{L_s^2((t, +\infty), L^{\frac 65}(\mathbb R^3))} \\
        &\hspace{1cm}\lesssim \Vert  (1-\chi)u|u|^{2}  \Vert _{L_s^2((t, +\infty), W^{\frac 12, \frac 65}(\mathbb R^3))} \to 0,
    \end{align*}
    where in the last line we used Strichartz estimates for the free Schr\"odinger flow in $\mathbb R^3$, then (\ref{eq:W11}) together with the embedding $W^{1, 1}(\R^3) \hookrightarrow W^{\frac 12, \frac {6}{5}}(\R^3)$ to conclude. This ends the proof of scattering in $H^{\frac 12}$.
    
    Now, as $t \to +\infty$
    $$
    \Vert u_+ - e^{-it\Delta} u(t) \Vert_{H^{\frac 12}(\R ^3)} \to 0,
    $$
    and $e^{-it\Delta} u(t)$ is uniformly bounded in $H^1$ forward in time, so, by 	uniqueness of the weak limit, 
    $u_+ \in H^1$. By interpolation, it follows that
    $$
    \Vert u_+ - e^{-it\Delta} u(t) \Vert_{H^{s}(\R ^3)} \to 0,    
    $$
    for any $s<1$. This ends the proof.

    \end{proof}

\appendix

\section{The exterior of an obstacle} \label{app:obst}

The purpose of this Appendix is to illustrate that the non-linear arguments presented in  \S\ref{s:nrj}, \S\ref{s:bil}, \S\ref{secEnd}, work just as well for the analogous problem posed in the exterior of a Dirichlet obstacle.

More precisely, let us consider the Cauchy problem
\begin{equation} \label{eq:DNLS_o}
\begin{cases}
    i\partial_t u  + \Delta u + iau = |u|^2 u, \\
    u = 0 \text{ on }\partial \Omega,\\
    u(0) = u_0 \in H^s(\R^3),
\end{cases}
\end{equation}
where $\Omega := \mathbb R^3 \backslash \Theta$, with $\Theta \subset \mathbb R^3$ compact with smooth boundary. The best general (i.e., not depending on the geometry of $\Theta$) Strichartz estimates for the linear Schr\"odinger equation outside $\Theta$ known at the moment involve a loss of $\frac{3}{2p}$ derivatives \cite{An08}, or a loss of $\frac 1p$ derivatives but for a restricted range of admissible couples \cite{BlSmSo12}, and this is not enough to obtain a well-posedness theory such as presented in \S\ref{secPrelim}. On the other hand, the rest of the proof works similarly, and we obtain:
\begin{thm}
    Assume that $\big\{ \partial \Theta \cdot n(x) < 0\big\} \Subset \big\{ a > 0\big\} $. Then, there exists a $C>0$, such that for any $T>0$, any $C([0, T], H^2(\Omega))$ solution to (\ref{eq:DNLS_o})
    {is bounded in $L^\infty H^1$ by $C\Vert u_0 \Vert_{H^1(\Omega)}$},
    and global  $C([0, T], H^2)$ solutions scatter in $H^{1-}(\R^3)$.
\end{thm}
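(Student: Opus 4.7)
The strategy is to replay the arguments of Sections \ref{s:nrj}, \ref{s:bil}, \ref{secEnd}, with the perturbation $G-\I$ now replaced by the obstacle, carefully tracking the boundary contributions on $\partial \Theta$ produced by integration by parts and absorbing them thanks to the control assumption $\{x\cdot n(x) < 0\} \Subset \{a > 0\}$.

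\textbf{Step 1: uniform $H^1$ bound and local energy decay.} We redo the Morawetz computation of Proposition \ref{prop:Mor} on $\Omega$. Each integration by parts generates a boundary contribution on $\partial \Theta$, the main one being the Pohozaev-type term $\int_{\partial \Theta}|\partial_n u|^2 (\nabla \chi \cdot n) \, d\sigma$, which has the sign of $x\cdot n$ for $\chi = \sqrt{|x|^2+1}$. To handle the non-star-shaped part of $\partial \Theta$, we modify $\chi$ inside a neighborhood $\mathcal N \Subset \{a>0\}$ of $\{x \cdot n<0\}$ so that the new weight $\chi_\Theta$ satisfies $\nabla \chi_\Theta \cdot n \geq 0$ on $\partial \Theta$ while coinciding with $\chi$ outside $\mathcal N$. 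The resulting Pohozaev term is non-negative and discarded. The interior error terms coming from the modification are supported in $\mathcal N \subset \{a>0\}$ and are controlled exactly as in the proof of Proposition \ref{prpUnifEst}, via the energy law \eqref{eq:EnerEvol2} and the pointwise bound \eqref{eq:Cepsi}. We thus obtain $\sup_t \lambda(t) < \infty$ and, by the same quadratic argument, the uniform bound $\sup_{t\geq 0} E[u(t)] \lesssim \Vert u_0 \Vert_{H^1}^2 + \Vert u_0 \Vert_{L^4}^4$ together with local energy decay analogous to Corollary \ref{cor:Ubound1}. As a by-product, the favorable boundary term retained along the way also gives hidden regularity of the form $\partial_n u \in L^2_t L^2(\partial \Theta)$.

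\textbf{Step 2: global $L^4L^4$ bound.} The bilinear Morawetz identity of Proposition \ref{prop:L4L4} is performed with $\rho(x) = |x|$. The interior terms are identical to those of Proposition \ref{prop:L4L4}, with all perturbation terms of type $(G-\I)$ now absent. Each new boundary term on $\partial \Theta$ either enjoys a favorable sign (after the same Pohozaev-type decomposition as in Step 1), or is bounded by $\Vert u\Vert_{L^\infty H^1}^2 \Vert \partial_n u\Vert_{L^2_t L^2(\partial \Theta)}^2$, itself finite by the hidden regularity of Step 1. This yields the global space-time bound $\Vert u \Vert_{L^4(\mathbb R_+\times \Omega)} \lesssim 1$.

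\textbf{Step 3: scattering.} Pick $\chi_0 \in C^\infty_c(\mathbb R^3)$ equal to $1$ on a neighborhood of $\overline\Theta \cup \operatorname{supp} a$. Then $w := (1-\chi_0) u$, extended by zero across $\Theta$, lives on $\mathbb R^3$ and satisfies
\begin{equation*}
i\partial_t w + \Delta w = [\Delta, \chi_0] u - (1-\chi_0) u |u|^2.
\end{equation*}
The argument of Section \ref{secEnd} then applies verbatim: Kato smoothing for the free group $e^{it\Delta}$ in $\mathbb R^3$ handles the commutator term thanks to the local energy decay of Step 1, the Sobolev chain $W^{1,1} \hookrightarrow W^{\frac 12, \frac 65}$ combined with dual endpoint Strichartz handles the nonlinearity via $u \in L^\infty H^1 \cap L^4 L^4$, and Corollary \ref{cor:Ubound2} applied to $\chi_0 u$ kills the localized piece in $H^s(\mathbb R^3)$ for $s<1$. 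This yields scattering of $u$ to some $u_+ \in H^{\frac 12}(\mathbb R^3)$; the uniform $H^1$ bound of Step 1 then upgrades scattering to $H^{1-}$ by interpolation.

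The main obstacle is Step 1: designing the Morawetz weight $\chi_\Theta$ on $\Omega$ so that, simultaneously, $\nabla \chi_\Theta \cdot n \geq 0$ on $\partial \Theta$, the modification is confined to the damping region $\{a>0\}$, and the bulk positivity features $-\Delta^2 \chi_\Theta > 0$ (giving $\lambda$) and $D^2 \chi_\Theta \geq 0$ (giving the gradient local decay) are preserved where they matter. For star-shaped $\Theta$ no modification is needed; in the general obstacle setting allowed here, the construction is a careful local surgery near the non-star-shaped portion of $\partial \Theta$, and verifying that every extra error term produced is genuinely absorbable by the damping via \eqref{eq:Cepsi} is the delicate point.
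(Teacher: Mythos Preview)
Your Steps 2 and 3 match the paper's approach. The difference is in Step 1, and it is worth noting because the paper's route avoids precisely the ``delicate local surgery'' you flag at the end.

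Instead of modifying the weight $\chi$, the paper keeps $\chi(x)=\sqrt{1+|x|^2}$ unchanged and proves a separate \emph{hidden regularity} lemma: applying the Morawetz identity with a \emph{compactly supported} auxiliary weight $Z\psi$, where $\nabla Z = n$ on $\partial\Omega$ and $\psi\in C^\infty_c$ is a cutoff equal to $1$ near $\omega$ and supported in $K$, yields
\[
\int_s^t\!\int_{\omega}|\partial_n u|^2\,d\sigma\,d\tau
\ \lesssim\ E[u(t)]+E[u(s)]+\int_s^t\!\int_{K\cap\Omega}\big(|\nabla u|^2+|u|^2+|u|^4\big)\,dx\,d\tau
+\int_s^t\!\int_\Omega |a|\,|\nabla u|\,|u|\,dx\,d\tau.
\]
One then takes $\omega=\{x\cdot n<0\}$ and $K=K_0$ with $\omega\Subset K_0\Subset\{a>0\}$. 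In the main Morawetz identity with the unmodified $\chi$, the boundary term on $\partial\Omega\setminus\omega$ has the favorable sign and is dropped, while the bad part on $\omega$ is fed into the lemma; the resulting interior error is localized in $K_0\cap\Omega\subset\{a>0\}$ and absorbed exactly as $\operatorname{supp}(G-\I)$ is in Proposition~\ref{prpUnifEst}. The same lemma, now with $\omega=\partial\Omega$ and $K=B(0,R)$, supplies the boundary control in the bilinear step.

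Your weight-modification approach is plausible in spirit, but the construction you leave open is genuinely nontrivial: achieving $\nabla\chi_\Theta\cdot n\ge 0$ on all of $\partial\Theta$ by a perturbation confined to $\{a>0\}$ while keeping the errors from the loss of $D^2\chi_\Theta\ge 0$ and $-\Delta^2\chi_\Theta>0$ inside $\mathcal N$ absorbable is not immediate. The paper's hidden-regularity route sidesteps this entirely by decoupling the boundary control (via a second, compactly supported multiplier) from the bulk Morawetz weight, so no surgery on $\chi$ is needed and the argument of \S\ref{s:nrj} carries over verbatim with $K_0\cap\Omega$ in the role of $\operatorname{supp}(G-\I)$.
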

\begin{figure}
\begin{center}
\includegraphics[scale=0.75]{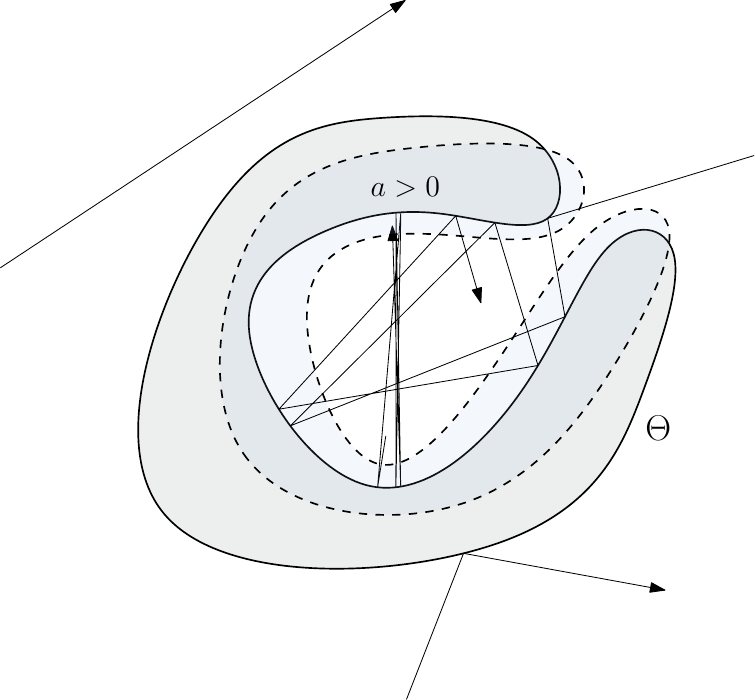}
\caption{Schematic representation of the control condition in the case of an obstacle. Only the non star-shaped portion of the boundary needs to be controlled.} 
\end{center}
\end{figure}

\begin{proof}[Sketch of the proof]
    The analogous to the Morawetz computation presented in the proof of Proposition \ref{prop:Mor} now writes
    \begin{equation}\label{eqVirVir_o}
    \begin{aligned}
    \operatorname{Im}\int  \bar u(t) \nabla  u(t) \cdot \nabla \chi dx &=
    \operatorname{Im} \int (\bar u(s) \nabla  u(s) \cdot \nabla \chi )dx  + \int_s^t \int     2 D^2 \chi \nabla u \cdot \nabla \bar u dx d\tau- \frac 12 \Delta^2 \chi |u|^2\\ &+ \frac 12 \Delta \chi |u|^4 
     - 2 \operatorname{Im} ( a \bar u \nabla u \cdot \nabla \chi) \, dx d\tau + \int_s^t \int_{\partial \Omega} |\partial_nu|^2 \nabla \chi \cdot n \, d\sigma d\tau.
      \end{aligned}
    \end{equation}
    With the above at hand, the supplementary ingredient to the computations of \S\ref{s:nrj} and \S\ref{s:bil} is the following boundary hidden-regularity estimate.
    \begin{lem} \label{lem:hidden_reg}
    Let $\omega \subset \partial \Omega$. Then, for any $K \subset \mathbb R^3$ compact so that $\omega \Subset K$, we have
    \begin{multline*}
            \int_s^t \int_{\omega} |\partial_nu|^2  \, d\sigma d\tau 
\lesssim E[u(t)] + E[u(s)] \\ +  
    \int_s^t \int_{K\cap \Omega} |\nabla u|^2 + |u|^2 + |u|^{4} \,dxd\tau + \int_s^t \int_{\Omega} |a||\nabla u ||u| \, dxd\tau.
    \end{multline*}
    \end{lem}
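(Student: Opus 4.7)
I would prove Lemma \ref{lem:hidden_reg} by a Rellich-type multiplier argument. The key idea is to select a compactly supported vector field $h$ whose normal component on $\partial \Omega$ is concentrated on $\omega$, and to use $h \cdot \nabla \bar u$ as a Morawetz-type multiplier. Under the Dirichlet condition $u|_{\partial \Omega} = 0$ one has $\nabla u = (\partial_n u) n$ on $\partial \Omega$, so integrating by parts against $h \cdot \nabla \bar u$ produces a boundary term proportional to $\int_{\partial \Omega}(h \cdot n) |\partial_n u|^2 \, d\sigma$, which, for a suitable $h$, controls $\int_\omega |\partial_n u|^2$.

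Concretely, I would first construct $h \in C^\infty(\bar \Omega, \R^3)$, supported in $K \cap \bar \Omega$, with $h \cdot n = 1$ on $\omega$ and $h \cdot n \geq 0$ on $\partial \Omega$. This is standard thanks to the smoothness of $\partial \Omega$ and the assumption $\omega \Subset K$: extend $n$ in a tubular neighborhood of $\omega$ inside $\bar \Omega$ and multiply by a suitable cut-off. I would then define the functional
$$
M(\tau) := \operatorname{Im} \int_\Omega \bar u(\tau) \, (h \cdot \nabla u(\tau)) \, dx
$$
and differentiate it in time using the equation (\ref{eq:DNLS_o}).

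The principal (Laplacian) contribution to $dM/d\tau$ is the sum of $-\operatorname{Re} \int \Delta \bar u \, (h \cdot \nabla u)$ and $\operatorname{Re} \int \bar u \, (h \cdot \nabla \Delta u)$. A careful double integration by parts, exploiting the Dirichlet condition to cancel the ``cross'' boundary terms, yields exactly $-\int_{\partial \Omega} (h \cdot n) |\partial_n u|^2 \, d\sigma$ plus interior bulk terms of the form $\int (\partial_j h_k)(\partial_k u)(\partial_j \bar u)$ and $\int \bar u \, \nabla(\operatorname{div} h) \cdot \nabla u$, both compactly supported in $K$. The cubic nonlinearity telescopes to $\tfrac{1}{2} \int (\operatorname{div} h) |u|^4$, since $|u|^2 \bar u \nabla u - \bar u \nabla(|u|^2 u) = -\tfrac{1}{2} \nabla(|u|^4)$ integrates by parts without producing a boundary contribution (as $u|_{\partial \Omega} = 0$). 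The damping piece collapses to $-2 \operatorname{Im} \int a \bar u (h \cdot \nabla u)$, dominated pointwise in $\tau$ by $\int |a| |u| |\nabla u|$.

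Isolating the boundary integral, integrating from $s$ to $t$, and bounding $|M(\tau)| \lesssim \|u\|_{L^2} \|\nabla u\|_{L^2} \lesssim E[u(\tau)]$ by Cauchy--Schwarz (absorbing the mass contribution, controlled via the mass law of Lemma \ref{lm:GWPDef}, into $E$) gives the claim, upon using $\int_\omega |\partial_n u|^2 \leq \int_{\partial \Omega} (h \cdot n) |\partial_n u|^2$. The main delicate point is the bookkeeping of the principal part: one must integrate by parts both $\operatorname{Re}\int \bar u (h \cdot \nabla \Delta u)$ and $-\operatorname{Re} \int \Delta \bar u (h \cdot \nabla u)$ so that the full ``cross'' boundary term vanishes by the Dirichlet condition while the ``square'' boundary term survives with the right sign. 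The $C([0,T], H^2)$ regularity assumed in the theorem is exactly what makes $\partial_n u|_{\partial \Omega}$ a well-defined $L^2$-trace and justifies the integrations by parts by approximation.
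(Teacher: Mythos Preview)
Your proof is correct and follows essentially the same approach as the paper. The paper's proof is just a one-line application of the Morawetz identity (\ref{eqVirVir_o}) with weight $\chi := Z\psi$, where $\nabla Z = n$ on $\partial\Omega$ (and $Z=0$ there, e.g.\ a signed distance) and $\psi\in C^\infty_c(K)$ equals $1$ near $\omega$; this is exactly your multiplier computation with the special choice $h = \nabla(Z\psi)$, so that $h\cdot n = \psi \geq 0$ on $\partial\Omega$ and $=1$ on $\omega$. Your use of a general (not necessarily gradient) vector field $h$ is a harmless generalization, and your bookkeeping of the principal, cubic, and damping contributions matches the terms in (\ref{eqVirVir_o}) line by line. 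One small point: to guarantee $h\cdot n\geq 0$ on \emph{all} of $\partial\Omega$ (not just near $\omega$), you should extend $n$ from a neighborhood of $\partial\Omega\cap K$ rather than only from $\omega$; and the endpoint bound $|M(\tau)|\lesssim E[u(\tau)]$ is, strictly speaking, $\lesssim 1+E[u(\tau)]$ (via $\|u\|_{L^2(K)}^2\lesssim_K \|u\|_{L^4}^2\lesssim E^{1/2}$), an imprecision the paper shares and which is harmless in every application.
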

    \begin{proof}
    Let $Z \in C^\infty_c(\mathbb R^3)$ be so that $\nabla Z(x) = n(x)$ on $\partial \Omega$, and $\psi \in C^\infty_c(\mathbb R^3)$ be so that $\operatorname{supp} \psi \subset K$ and $\psi = 1$ near $\omega$. Applying (\ref{eqVirVir_o}) with $\chi := Z \psi$ gives the result.
    \end{proof}
    We now let $K_0 \subset \mathbb R^3$ so that $\big\{ \partial \Theta \cdot n(x) < 0\big\} \Subset K_0 \Subset \big\{ a>0\big\}$. Combining
    Lemma \ref{lem:hidden_reg}, used with $\omega := \big\{ \partial \Theta \cdot n(x) < 0\big\}$ and $K := K_0$, with (\ref{eqVirVir_o}), gives the following analogous to Proposition \ref{prop:Mor} in the case $\chi(x) = \sqrt{1+x^2}$: 
    \begin{align}\label{eqVirVir_o2}
    &\operatorname{Im}\int  \bar u(t) \nabla  u(t) \cdot \nabla \chi dx \leq
    \operatorname{Im} \int (\bar u(s) \nabla  u(s) \cdot \nabla \chi ) + \int_s^t \int    2 D^2 \chi \nabla u \cdot \nabla \bar u - \frac 12 \Delta^2 \chi |u|^2\\ &\hspace{5cm}+ \frac 12 \Delta \chi |u|^4 
       \, dx d\tau + 
     \mathcal E(t,s), \nonumber \\
     &|\mathcal E(t,s)| \lesssim 
     E[u(t)] + E[u(s)] + \int_s^t \int_\Omega |a||u||\nabla u|\,dxd\tau + \int_s^t \int_{K_0\cap \Omega}  |\nabla u|^2 + |u|^2 + |u|^4\, dx d\tau, \nonumber
      \end{align}
      note in particular that we used the sign of $\nabla \chi \cdot n$ in $\partial \Omega \backslash \omega$.
    Using (\ref{eqVirVir_o2}), one obtains the analogous to Proposition \ref{prpUnifEst} and its consequences, with the same proof, $K_0 \cap \Omega$ playing the role of $\operatorname{supp}(G - \I)$. The analogous to Proposition \ref{prop:L4L4} is then shown similarly by using Lemma \ref{lem:hidden_reg} with $K:= B(0, R)$, $\partial \Omega \subset B(0, R)$, to control the boundary terms arising in the computation. Finally, the end of the proof presented in \S5 holds in the same way, by taking the cut-off $\chi$ equal to one in a neighborhood of $\operatorname{supp}a \cup \partial \Omega$.
    \end{proof}

\section{Mildly trapping case} \label{app:non_trap}

In this appendix, we show scattering up to $H^1$ in the particular case where $G$ induces a mildly trapping Hamiltonian flow. More precisely, we assume that the undamped linear propagator verifies global Strichartz estimates without loss, and a local energy decay estimate:

\begin{assum} \label{ass:mid_trap}
    We assume that global-in-time, non-endpoint Strichartz estimates without loss hold for $e^{it\Delta_G}$: for any $(p, q)$ satisfying condition \eqref{eq:StrichS} with $p>2$, for any $s \geq 0$, $u \in H^s(\R^3)$, and $f \in L^1((0,t), H^s(\R^3))$, the following estimates hold with implicit constants independent of $t$:
    \begin{equation}\label{eqStrichR31}
        \| e^{it\Delta_G} u \|_{L^p((0,t), W^{s,q}(\R^3))} \lesssim \| u \|_{H^s(\R^3)},
    \end{equation}
    \begin{equation}\label{eqStrichR32}
        \left\| \int_0^t e^{i(t-s)\Delta_G} f(s) \, ds \right\|_{L^p((0,t), W^{s,q}(\R^3))} \lesssim \| f \|_{L^1(0,t),H^s(\R^3))}.
    \end{equation}
    In addition, we assume the following local energy decay estimate:
    \begin{equation}\label{eqGenLEC}
        \| e^{it\Delta_G} u \|_{L^2(\mathbb R, L^2(\R ^3))} \lesssim \| u \|_{L^2(\R^3)}.
    \end{equation}
\end{assum}

A primary and significant example of a matrix satisfying the above is the flat case $G = \I$, corresponding to the classical cubic defocusing nonlinear Sch\"odinger equation (NLS) with a linear, spatially localized dissipation term (see \eqref{eq:DNLS}). More generally, any non-trapping perturbation $G$ (in the sense that all the Hamiltonian trajectories defined by (\ref{eq:Ham}) go to infinity) verifies Assumption \ref{ass:mid_trap}, as, as mentionned previously, the non-trapping assumption implies global-in-time local smoothing estimate \cite{BuGeTz04b}, which, together with the frequency-localized Strichartz estimates of \cite{BuGeTz04}, implies global-in-time Strichartz estimates without loss \cite{Iv10, StTa02}. Note that Assumption \ref{ass:mid_trap} is more general and allows some weak trapped trajectories, such as in hyperbolic trapping, for which a smoothing estimate with an arbitrary loss of $\epsilon>0$ derivatives (hence in particular (\ref{eqGenLEC})), and global Strichartz estimates without loss, are expected to hold (see \cite{BuGuHa10} in the case of the Laplace operator on a manifold).

For the remainder of this section, we assume that $G$ verifies Assumption \ref{ass:mid_trap}. Under this assumption;

\begin{enumerate}
    \item The {local well-posedness result} stated in Proposition \ref{prpLocExsCubic} is now valid for initial data \( u_0 \in H^1(\R^3) \), as it follows directly from the usual Kato contraction argument (e.g., see \cite{Ca03}) thanks to the Strichartz estimates without loss of Assumption \ref{ass:mid_trap}. 

    \item The uniform estimates on the energy in Proposition \ref{prpUnifEst}, the $L^4L^4$ control in Proposition \ref{prop:L4L4} and the local energy decay in Corollary \ref{cor:Ubound1} hold with the above proofs, as they are independent of whether $G$ is trapping or not. 
\end{enumerate}
We summarize (1)--(2) in the following Proposition:
\begin{prop}
    Let $G$ verifying Assumption \ref{ass:mid_trap}, and  $a$ verifying the control condition \eqref{eq:control}. Then, for any initial data \( u_0 \in H^1(\R^3) \), there exists a unique global solution \( u \in C([0, \infty); H^1(\R^3)) \) to \eqref{eq:DNLS}. It satisfies the bound:
    \begin{equation}\label{eqH1L4Cont}
        \sup_{t > 0} \| u(t) \|_{H^1(\mathbb R^3)} + \| u \|_{L^4(\R_+, L^4(\R^3))} \lesssim 1.
    \end{equation}
    Moreover, for any $R>0$, there is $C>0$ so that
\begin{equation*}
    \Vert u \Vert_{L^2(\mathbb R_+, H^1(B(0,R)))} \leq C.
\end{equation*}
\end{prop}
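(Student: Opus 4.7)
The statement bundles together four claims: local well-posedness in $H^1$, global existence in $H^1$, a uniform bound on $\|u(t)\|_{H^1}$ together with a global $L^4L^4$-control, and a local energy decay estimate. My plan is to obtain the first directly from the enhanced linear theory provided by Assumption \ref{ass:mid_trap}, and then to transfer the nonlinear arguments of \S\ref{s:nrj}--\S\ref{s:bil} verbatim to this setting, observing that none of them makes genuine use of the nature of the Hamiltonian flow associated with $-\Delta_G$.

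For local well-posedness, I would run the classical Kato contraction argument (as in \cite[\S4.4]{Ca03}) in the resolution space $X_T := L^\infty((0,T), H^1(\R^3)) \cap L^p((0,T), W^{1,q}(\R^3))$ for some admissible pair $(p,q)$ with $p,q>2$ (with Sobolev embedding $W^{1,q} \hookrightarrow L^\infty$ for a suitable choice). Under Assumption \ref{ass:mid_trap}, the Strichartz estimates (\ref{eqStrichR31})--(\ref{eqStrichR32}) are available without loss of derivative, hence the fixed-point argument closes at the scale-invariant $H^1$ level -- there is no need for the $H^{1+\epsilon}$ threshold that forced us to pay $\frac{1}{p}$ derivatives in Proposition \ref{prpLocExsCubic}. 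The damping $iau$ is absorbed as a bounded zeroth-order perturbation, and the cubic nonlinearity gains a positive power of $T$ via H\"older. This yields local existence, uniqueness, continuous dependence, persistence of $H^m$-regularity, and the standard blow-up alternative.

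For the remaining claims, the key observation is that the arguments of \S\ref{s:nrj}--\S\ref{s:bil} are essentially algebraic: Proposition \ref{prop:Mor} and the bootstrap of Proposition \ref{prpUnifEst} rely only on the mass and energy laws (Lemma \ref{lm:GWPDef}), the control condition (\ref{eq:control}), and an elementary coercivity for $D^2 \chi$; likewise, the bilinear Morawetz computation of Proposition \ref{prop:L4L4} uses only the equation, the uniform $H^1$-bound, and the local energy decay of Corollary \ref{cor:Ubound1}. None of these ingredients feels the Hamiltonian flow of $-\Delta_G$. Replaying them for our $H^1$ solution therefore produces $\sup_{t>0} E[u(t)] \lesssim 1$, which via the blow-up alternative promotes the local solution to a global one, and then yields the $L^2_t H^1(B(0,R))$ decay and the $L^4 L^4$-bound in sequence.

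The only genuine subtlety I anticipate is the rigorous justification of the Morawetz identities at the $H^1$ level: they were originally derived for $H^{1+\epsilon}$ solutions, where the integrations by parts are unproblematic. I would handle this by the standard approximation procedure -- take $u_0^n \to u_0$ in $H^1$ with $u_0^n \in H^m$ for $m$ large, obtain the identities for the smooth solutions $u^n$ (global by persistence of regularity), and pass to the limit using the continuous dependence provided by the strengthened local theory. Beyond this book-keeping, the proof requires no new ideas.
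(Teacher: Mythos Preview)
Your proposal is correct and follows essentially the same approach as the paper: the paper's proof consists precisely of the two observations preceding the Proposition, namely that Kato's contraction argument now closes at the $H^1$ level thanks to the loss-free Strichartz estimates of Assumption \ref{ass:mid_trap}, and that the nonlinear computations of \S\ref{s:nrj}--\S\ref{s:bil} are independent of the trapping structure. Your additional remark about justifying the Morawetz identities at the $H^1$ level by approximation is a reasonable piece of book-keeping that the paper leaves implicit.
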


In the mildly-trapping case, we establish that it is possible to achieve \textit{scattering in \( H^1(\R^3) \)} rather than in \( H^{1-}(\R^3) \), as stated in Theorem \ref{thm:ScatDef}. While the result in \( H^{1-}(\R^3) \) follows from the arguments presented in Section \ref{secEnd}, 
the mildly trapping condition enables a stronger result. Specifically, 
the availability of global-in-time Strichartz estimates without loss for the undamped flow $e^{it\Delta_G}$ permits to show scattering in $H^1(\R ^3)$ from the control of a global Strichartz norm, and we obtain:

\begin{thm} \label{th:mid_trapp}
    Let $G$ verifies Assumption \ref{ass:mid_trap}, and let $a$ satisfy condition \eqref{eq:control}. Then, for any initial data \( u_0 \in H^1(\R^3) \), there exists $u_+, \widetilde u_+ \in H^1(\R^3)$ so that the solution $u \in C([0, +\infty), H^1(\R ^3))$ to (\ref{eq:DNLS}) with data $u_0$ verifies
    $$ 
\lim_{t \to +\infty} \Vert u(t) - e^{it\Delta} \widetilde u_+ \Vert_{H^1(\R ^3)} =
\lim_{t \to +\infty} \Vert u(t) - e^{it\Delta_G}  u_+ \Vert_{H^1(\R ^3)} = 0.
    $$
\end{thm}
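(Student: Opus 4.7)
The key improvement over Theorem~\ref{thm:ScatDef} comes from the Strichartz estimates without loss granted by Assumption~\ref{ass:mid_trap}. My plan is to first show scattering in $H^1$ to the variable-coefficient flow $e^{it\Delta_G} u_+$ by a direct Duhamel argument, and then transfer to the free flow $e^{it\Delta}\widetilde u_+$ via the existence of a wave operator in $H^1$ between the two linear groups.

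\emph{Step 1 (Global $H^1$-Strichartz bootstrap).} Combining the $L^4(\R_+,L^4)$ bound of Proposition~\ref{prop:L4L4} with the uniform $H^1$ estimate and using \eqref{eqStrichR31}--\eqref{eqStrichR32}, I would show $\|u\|_{L^q(\R_+,W^{1,r})}<\infty$ for every admissible $(q,r)$ with $q>2$. The standard way to close this bootstrap is to partition $\R_+$ into finitely many intervals $I_j$ on which $\|u\|_{L^4(I_j,L^4)}$ lies below a fixed smallness threshold, then close a contraction-type estimate on each $I_j$ using H\"older, Sobolev embedding, and the smallness of $\|u\|_{L^4L^4(I_j)}$. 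The damping $iau$ goes into the Strichartz source, since $a\in L^\infty$ has compact support.

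\emph{Step 2 (Scattering to $e^{it\Delta_G}u_+$).} Setting
$$
u_+ := u_0 - i \int_0^\infty e^{-is\Delta_G}\bigl(|u|^2 u - i a u\bigr)(s)\,ds,
$$
the nonlinear tail $\bigl\|\int_t^\infty e^{-is\Delta_G}|u|^2 u\,ds\bigr\|_{H^1}$ vanishes as $t\to\infty$ by dual Strichartz applied to the global $L^q W^{1,r}$ bound from Step~1. For the damping tail, I pair with $F\in L^2$, commute $\langle \nabla_G\rangle$ through $e^{is\Delta_G}$, and apply Cauchy-Schwartz in time to reduce the estimate to the product of $\|a\,e^{is\Delta_G}\langle \nabla_G\rangle F\|_{L^2_s L^2_x}$ (controlled by \eqref{eqGenLEC} applied to $\langle \nabla_G\rangle F$, the compact support of $a$ being used to localize) and $\|u\|_{L^2_s H^1(\operatorname{supp}a)((t,\infty))}$ (finite by Corollary~\ref{cor:Ubound1}, vanishing on the tail).

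\emph{Step 3 (Scattering to $e^{it\Delta}\widetilde u_+$, and main obstacle).} I construct the wave operator $W_+:H^1(\R^3)\to H^1(\R^3)$,
$$
W_+ f := \lim_{t\to\infty} e^{-it\Delta} e^{it\Delta_G} f,
$$
and set $\widetilde u_+ := W_+ u_+$; scattering to $e^{it\Delta}\widetilde u_+$ then follows from Step~2 by the triangle inequality. Existence of the limit in $H^1$ is obtained from Cook's method applied to
$$
e^{-it\Delta} e^{it\Delta_G} f - f = i \int_0^t e^{-is\Delta}\operatorname{div}\bigl((G-\I)\nabla e^{is\Delta_G} f\bigr)\,ds,
$$
by dualizing against $F \in H^{-1}$, integrating by parts to move the divergence onto the test, and coupling Strichartz together with Kato smoothing for the free flow $e^{is\Delta}$ with the local energy decay \eqref{eqGenLEC} applied to $\langle \nabla_G\rangle f$ (whose commutativity with $e^{is\Delta_G}$ is again crucial). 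The main technical obstacle is precisely this matching of regularities across the compactly-supported perturbation $G-\I$: the Kato smoothing of the free flow gains only half a derivative and \eqref{eqGenLEC} gains none, so the budget is tight and forces the joint use of both halves of Assumption~\ref{ass:mid_trap}.
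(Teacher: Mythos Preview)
Your Steps~1 and~2 follow the paper's overall strategy, but each contains a detail gap. In Step~1, putting $iau$ ``into the Strichartz source'' is not immediate: Assumption~\ref{ass:mid_trap} only gives non-endpoint pairs $p>2$, so the dual exponent satisfies $p'<2$, whereas the only global-in-time control available on $au$ is $L^2_t H^1_{\rm loc}$ from Corollary~\ref{cor:Ubound1}, and $L^2_t$ does not embed into $L^{p'}_t$ on an infinite interval. The paper handles this via a Christ--Kiselev argument pairing \eqref{eqStrichR31} with the dual of \eqref{eqGenLEC}. Similarly in Step~2, applying \eqref{eqGenLEC} to $\langle\nabla_G\rangle F$ as you describe yields a bound by $\|\langle\nabla_G\rangle F\|_{L^2}\sim\|F\|_{H^1}$, not $\|F\|_{L^2}$; the paper instead shows that $f\mapsto\int_{\R} e^{-is\Delta_G}\chi f\,ds$ is bounded $L^2_s H^1_x \to H^1$ by interpolating between the $L^2$ case (dual of \eqref{eqGenLEC}) and the $H^2$ case (commute the local operator $\Delta_G$ through, then apply the $L^2$ case).

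The substantive gap is in Step~3. Your Cook integrand $\operatorname{div}((G-\I)\nabla e^{is\Delta_G}f)$ is a \emph{second-order} operator applied to $e^{is\Delta_G}f$, supported precisely on $\operatorname{supp}(G-\I)$. Count derivatives: to land in $H^1$, dual Kato smoothing for $e^{-is\Delta}$ gains half a derivative, so you need the integrand in $L^2_s H^{1/2}_x$, hence $\chi e^{is\Delta_G}f \in L^2_s H^{5/2}_x$; but \eqref{eqGenLEC} has no gain, so this forces $f\in H^{5/2}$. The budget is not ``tight'' --- it is short by $3/2$ derivatives, and adding Strichartz does not help (it demands even more regularity on the source). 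The paper's fix uses two ideas you are missing: first, it decomposes the linear wave as $\chi e^{it\Delta_G}u_+ + (1-\chi)e^{it\Delta_G}u_+$ and shows the first piece goes to zero in $H^1$ directly, so that the Cook integrand for the second piece becomes $[\Delta,\chi]e^{is\Delta_G}u_+$, a \emph{first-order} operator; second, it takes $\chi=1$ on a large ball so that $\operatorname{supp}\nabla\chi$ lies \emph{away} from $\operatorname{supp}(G-\I)$, where Cardoso--Vodev resolvent estimates give genuine local smoothing (half-derivative gain) for $e^{it\Delta_G}$ --- a strictly stronger input than \eqref{eqGenLEC}. With one fewer derivative in the commutator and a half-derivative gain from each of the two linear groups, the $H^1$ estimate closes.
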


To control a global Strichartz norm at the $H^1(\R ^3)$ level from the $L^4 L^4$ global control, we will argue similarly as in \cite{CoKeStTaTa04}. The supplementary ingredient will be to treat $iau$ as a source term. We will be able to do so thanks to the following:  

\begin{lem}
    Let $G$ verifies Assumption \ref{ass:mid_trap}, and let $a$ satisfy condition \eqref{eq:control}. Then, for any $(p,q)$ Strichartz-admissible with $p>2$ and any solution to (\ref{eq:DNLS}), the following holds:
    \begin{equation}\label{eqB}
         \left\| \int_0^t e^{i(t-s)\Delta_G} a u \, ds \right\|_{L_t^p(\mathbb R_+, W^{1,q}(\R^3))} \lesssim \| u_0 \|_{H^1(\mathbb R^3)} + \Vert u_0 \Vert^2_{L^4(\R^3)}.
    \end{equation}
\end{lem}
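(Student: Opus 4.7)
The strategy is a $TT^\ast$-type argument, combining the Strichartz estimates of Assumption \ref{ass:mid_trap} with the local energy decay \eqref{eqGenLEC} via duality, converting the resulting non-retarded estimate into a retarded one by the Christ-Kiselev lemma, and finally closing the bound with the nonlinear local energy decay of Corollary \ref{cor:Ubound1}.

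In detail, fix a compact $K \supset \operatorname{supp}(a)$. Dualizing \eqref{eqGenLEC} gives, for any $g \in L^2_t L^2_x$ with $\operatorname{supp}_x g \subset K$,
\[
\Bigl\| \int_0^\infty e^{-is\Delta_G} g(s)\, ds \Bigr\|_{L^2(\R^3)} \lesssim \|g\|_{L^2_{t,x}}.
\]
Composing this with the homogeneous Strichartz estimate \eqref{eqStrichR31} yields the non-retarded bound
\[
\Bigl\| \int_0^\infty e^{i(t-s)\Delta_G} g(s)\, ds \Bigr\|_{L^p_t L^q_x} \lesssim \|g\|_{L^2_{t,x}},
\]
and, since $p > 2$, the Christ-Kiselev lemma converts this into the retarded version with $\int_0^t$ on the left. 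To reach the $W^{1,q}$-norm required on the left-hand side of \eqref{eqB}, the same scheme is rerun at the $H^1$-level: since $\langle -\Delta_G \rangle^{1/2}$ commutes with $e^{it\Delta_G}$ by functional calculus, and commutators $[\langle -\Delta_G \rangle^{1/2}, \chi]$ with a smooth compactly supported $\chi$ are order-zero pseudo-differential operators whose output can be relocalized near $K$, the $L^2$-LED \eqref{eqGenLEC} upgrades to its $H^1$-version
\[
\|e^{it\Delta_G} u_0\|_{L^2(\R, H^1(K))} \lesssim \|u_0\|_{H^1(\R^3)}.
\]
Inserting this into the same $TT^\ast$ + Christ-Kiselev scheme then gives, for $g$ supported in $K$,
\[
\Bigl\| \int_0^t e^{i(t-s)\Delta_G} g(s)\, ds \Bigr\|_{L^p_t W^{1,q}_x} \lesssim \|g\|_{L^2(\R_+, H^1(\R^3))}.
\]

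It remains to apply this with $g = au$, which is supported in $K$. Since $a$ is smooth and compactly supported,
\[
\|au\|_{L^2(\R_+, H^1(\R^3))}^2 \lesssim \|u\|_{L^2(\R_+, H^1(K))}^2 \lesssim \|u_0\|_{H^1}^2 + \|u_0\|_{L^4}^4,
\]
the last inequality being Corollary \ref{cor:Ubound1}. Taking square roots closes \eqref{eqB}. The only non-routine technical point is the upgrade of \eqref{eqGenLEC} from the $L^2$-level to the $H^1$-level; this is where the smoothness of $G$, uniform ellipticity of $\Delta_G$, and the fact that $G - \operatorname{I}$ is compactly supported enter, through standard pseudo-differential commutator estimates. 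Once this upgrade is granted, the remainder is a textbook application of the $TT^\ast$ method, Christ-Kiselev, and the nonlinear local energy decay of Section \ref{s:nrj}.
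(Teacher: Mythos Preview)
Your overall architecture --- dualize the local energy decay \eqref{eqGenLEC}, compose with the homogeneous Strichartz estimate, apply Christ--Kiselev (using $p>2$), then feed in $g=au$ and close with Corollary~\ref{cor:Ubound1} --- is exactly the paper's. The difficulty is entirely in the $H^1$-upgrade, and there your argument has a real gap.

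You establish the \emph{forward} $H^1$-LED $\|\chi e^{it\Delta_G} u_0\|_{L^2_t H^1_x}\lesssim\|u_0\|_{H^1}$ and then say that ``inserting this into the same $TT^\ast$ + Christ--Kiselev scheme'' yields the retarded bound. But the scheme requires the operator $g\mapsto\int_{\mathbb R} e^{-is\Delta_G}\chi g(s)\,ds$ to be bounded $L^2_tH^1\to H^1$, and this is \emph{not} obtained by dualizing your forward $H^1$-LED. With the $L^2$-pairing, the adjoint of $\chi e^{it\Delta_G}:H^1\to L^2_tH^1$ is $\int e^{-is\Delta_G}\chi:L^2_tH^{-1}\to H^{-1}$, the wrong shift; with the $H^1$-inner product, the Hilbert adjoint is a conjugated operator, not $\int e^{-is\Delta_G}\chi$ itself. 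If instead you try to upgrade the \emph{dual} estimate directly by writing $\langle -\Delta_G\rangle^{1/2}\int e^{-is\Delta_G}\chi g=\int e^{-is\Delta_G}\langle -\Delta_G\rangle^{1/2}(\chi g)$ and relocalizing, the piece $(1-\tilde\chi)\langle -\Delta_G\rangle^{1/2}\chi$ is order $-\infty$ but its output is not spatially compactly supported, so the $s=0$ dual LED does not apply to it; and since only $L^2_s$-integrability is available, the unitary bound is useless. This is not handled by ``standard pseudo-differential commutator estimates''.

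The paper bypasses both issues by upgrading the \emph{dual} estimate directly, using the \emph{differential} (hence local) operator $\Delta_G$ rather than the non-local $\langle -\Delta_G\rangle^{1/2}$. One writes $\Delta_G\int e^{-is\Delta_G}\chi f\,ds=\int e^{-is\Delta_G}\tilde\chi\,\Delta_G(\chi f)\,ds$; since $\Delta_G(\chi f)$ is genuinely compactly supported, the $s=0$ dual LED applies cleanly, and elliptic regularity gives the $H^2$-bound. Complex interpolation between $s=0$ and $s=2$ then yields $\int e^{-is\Delta_G}\chi:L^2_tH^1\to H^1$, after which your Christ--Kiselev and Corollary~\ref{cor:Ubound1} steps go through verbatim.
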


\begin{proof}
We will show that for any $\chi \in C^\infty_c(\mathbb R^3)$ the operator
$$
T : f \in L^2 (\mathbb R, H^{1}(\R^3)) \mapsto 
\int_{s<t} e^{i(t-s)\Delta_G} \chi f(s) \, ds \in L^p(\mathbb R, W^{1,q}(\R^3))
$$
is bounded. 

Indeed, if it is the case, take $\chi \in C^\infty_c(\R^3)$ be a smooth cutoff function with $\chi = 1$ on $\operatorname{supp} a$.
    Using the operator bound on $f(s) := au(s) \mathbf 1_{s \geq 0}$, then the local energy decay property \eqref{eq:L2H1LocDec}, we obtain
    \begin{equation*}
         \left\| \int_0^t e^{i(t-s)\Delta_G} \chi a u \, ds \right\|^2_{L_t^p(\mathbb R_+, W^{1,q}(\R^3))} \lesssim \| a u \|^2_{L^2((0, \infty), H^{1}(\R^3))} \lesssim \| u_0 \|^2_{H^1(\mathbb R^3)} + \| u_0 \|^4_{L^4(\R^3)},
    \end{equation*}
    where the compact support of $a$ ensures the bound, and we are done. 
    
    To this end, we first claim that, for any $s\in[0,2]$
\begin{equation} \label{eq:smooth_bdd}
        f \in L^2 (\mathbb R, H^{s}(\R^3)) \mapsto 
        \int_{\mathbb R} e^{-is\Delta_G} \chi f(s) \, ds \in H^s(\R^3)
\end{equation}
    is bounded.
    Indeed, for $s = 0$, the claim is the dual estimate to local energy decay (\ref{eqGenLEC}). Next, for $s=2$, we write, letting $\widetilde \chi \in C^\infty_c(\R^3)$ be equal to one on $\operatorname{supp}\chi$ and using the claim with $s=0$
    \begin{align*}
    \Big\Vert \Delta_G \int_{\mathbb R} e^{-is\Delta_G} \chi f(s) \, ds\Big\Vert_{L^2(\R^3)} 
    &= \Big\Vert \int_{\mathbb R} e^{-is\Delta_G} \widetilde \chi \Delta_G(\chi f(s)) \, ds\Big\Vert_{L^2(\R^3)} \\
    &\lesssim \Vert \Delta_G(\chi f(s)) \Vert_{L^2(\R, L^2(\R^3))} \lesssim \Vert f \Vert_{L^2(\R, H^{2}(\R^3))}.
    \end{align*}
    But, by elliptic regularity
    $$
    \Big\Vert \int_{\mathbb R} e^{-is\Delta_G} \chi f(s) \, ds\Big\Vert_{H^2(\R^3)} 
 \lesssim\Big\Vert \Delta_G \int_{\mathbb R} e^{-is\Delta_G} \chi f(s) \, ds\Big\Vert_{L^2(\R^3)} + 
 \Big\Vert \int_{\mathbb R} e^{-is\Delta_G} \chi f(s) \, ds\Big\Vert_{L^2(\R^3)}, 
    $$
    hence the claim for $s=2$ follows. The claim is shown for any $s\in[0,2]$ by interpolation.
    
    To conclude, we use
    a Theorem due to Christ and Kiselev \cite{ChKi01}, in the following form from \cite[Lemma 5.6]{PV1}: 
    \begin{lem} \label{lem:CK}
    Let $U(t)$ be a one-parameter group of operators, $1\leq a < b \leq +\infty$, $H$ a Hilbert space and $B$, $\widetilde B$ two Banach spaces.   Assume that
    $$
    \varphi \in H \mapsto U(t) \varphi \in L^b(B), \hspace{0.5cm} g \in L^a(\widetilde B) \mapsto \int U(-s)g(s) ds \in H
    $$
    are bounded. Then, the operator
    $$
    g \in L^a(\widetilde B) \mapsto \int_{s<t} U(t-s)g(s) ds \in L^b(B)
    $$
    is bounded.
    \end{lem}
    Take $U(t) = e^{it\Delta_G}$, $H = H^1(\mathbb R^3)$, $a=2$, $b = p$, $B=W^{1,q}$, $\widetilde B = H^{1}$. The claim with $s=1$ together with the Strichartz estimates (\ref{eqStrichR31}) and Lemma \ref{lem:CK} show that $T$ is bounded, and the proof is completed.
\end{proof}

\begin{proof}[Proof of Theorem \ref{th:mid_trapp}]
Scattering in \( H^1(\R^3) \) will follow once we establish a uniform bound of the form
\begin{equation}\label{eqZ1}
    Z(t) := \sup_{\substack{p, q \text{ admissible}\\ p>2}} \| u \|_{L^p((0,t), W^{1,q}(\R^3))} \leq C(\| u_0 \|_{H^1(\mathbb R^3)}).
\end{equation}

From the $L^4L^4$ control \eqref{eqH1L4Cont}, we partition $[0, \infty)$ into a finite number of intervals \( J_1, \dots, J_K \) such that for each \( i = 1, \dots, K \),
\begin{equation*}
    \| u \|_{L^4(J_i, L^4(\R^3))} \leq \epsilon,
\end{equation*}
where \(\epsilon = \epsilon(\| u_0 \|_{H^1(\mathbb R^3)})\) is a small constant to be determined. For any \( t \in J_i \), the Strichartz estimates \eqref{eq:StrichS} together with (\ref{eqB}) yield
\begin{equation*}
    Z(t) \lesssim  \| |u|^2 u \|_{L^{10/7}((0,t), W^{1,10/7}(\R^3))} + \Vert u_0 \Vert_{H^1(\mathbb R^3)} + \Vert u_0 \Vert_{L^4(\R^3)}^2.
\end{equation*}
Using the fractional Leibniz rule, we have
\begin{equation*}
\begin{aligned}
     \| |u|^2 u \|_{L^{10/7}((0,t), W^{1,10/7}(\R^3))} & \lesssim \| u \|_{L^{10/3}((0,t), W^{1,10/3})} \| u \|_{L^5((0,t), L^5(\R^3))}^2 \\ 
     &\lesssim Z(t) \| u \|_{L^4((0,t), L^4(\R^3))}^\alpha \| u \|_{L^6((0,t), L^6(\R^3)}^\beta
\end{aligned}
\end{equation*}
for some \(\alpha, \beta > 0\), where we interpolate \( L^5L^5 \) between \( L^4L^4 \) and \( L^6L^6 \). By Sobolev embedding, we bound the \( L^6L^6 \)-norm as:
\begin{equation*}
    \| u \|_{L^6((0,t), L^6(\R^3))} \lesssim \| u \|_{L^6((0,t), W^{2/3,18/7}(\R^3))} \leq Z(t),
\end{equation*}
and we conclude
\begin{equation*}
    Z(t) \lesssim \| u_0 \|_{H^1(\mathbb R^3)} + \| u_0 \|_{L^4}^2+ \epsilon^\alpha Z(t)^{1+\delta_2},
\end{equation*}
for some constants \(\alpha, \delta_2 > 0\). Choosing \(\epsilon > 0\) sufficiently small ensures that \eqref{eqZ1} holds on \( J_i \). We establish the bound globally by repeating this argument over all intervals \( J_1, \dots, J_K \). 

Scattering in \( H^1(\R^3) \) to a wave $e^{it\Delta_G}u_+$ now follows from the similar arguments as used in the conclusion of the proof of Theorem \ref{thm:ScatDef} in Section \ref{secEnd}, replacing the \( H^{\frac 12} \)-norm with the \( H^1 \)-norm, and without cutting away $\operatorname{supp} a$. More precisely, from the Duhamel formula, it suffices to show that
    \begin{equation} \label{eq:scat_crit_I1NT}
            \Big\Vert  \int_t^\infty e^{-is\Delta_G}u|u|^{2}(s) \, ds \Big\Vert_{H^{1}(\mathbb R^3)} \to 0 \hspace{0.3cm}\text{ as }\hspace{0.3cm}t\to +\infty,
    \end{equation}
    and
    \begin{equation}\label{eq:scat_crit_I2NT}
            \Big\Vert  \int_t^\infty e^{-is\Delta_G} a u(s) ds \Big\Vert_{H^{1}(\mathbb R^3)} \to 0 \hspace{0.3cm}\text{ as }\hspace{0.3cm}t\to +\infty.
    \end{equation}
    For \eqref{eq:scat_crit_I1NT} we use \eqref{eqStrichR32}, H\"older's inequality and Riesz-Thorin theorem to get 
    \begin{align*}
        \Big\Vert  \int_t^\infty e^{-is\Delta_G} |u|^2 u(s) ds \Big\Vert_{H^{1}(\mathbb R^3)} &\lesssim \| |u|^2 u \|_{L^\frac{8}{5}((t,\infty), W^{\frac{4}{3},1}(\R^3))} \lesssim \| u \|_{L^8((t,\infty),L^4(\R^3))}^2 \| u \|_{L^\frac{8}{3}((t,\infty), W^{4,1}(\R^3))} \\
        &\lesssim \| u \|_{L^\infty((t,\infty),L^4)} \| u \|_{L^4((t,\infty),L^4(\R^3))}   \| u \|_{L^\frac{8}{3}((t,\infty), W^{4,1}(\R^3))} \to 0
    \end{align*}
    since $(\frac{8}{3},4)$ is admissible. 
    For (\ref{eq:scat_crit_I2NT}), we use the operator bound (\ref{eq:smooth_bdd}) for $s=1$ and $\chi \in C^\infty_c(\mathbb R^3)$ so that $\chi = 1$ on $\operatorname{supp} a$, on $f(s) := au(s) \mathbf 1_{s \in [t, +\infty)}$ and obtain
    $$
    \Big\Vert  \int_t^\infty e^{-is\Delta_G} a u(s) ds \Big\Vert_{H^{1}(\mathbb R^3)} \lesssim \Vert au \Vert_{L^2([t, + \infty), H^{1}(\R^3))} \to 0
    $$
    thanks to \eqref{eq:L2H1LocDec}. 
    Finally, to show that scattering also holds to a free wave $e^{it\Delta}\widetilde u_+$, we use the following linear scattering result:
    \begin{lem}
        For any $u_+ \in H^1(\mathbb R_+)$, there exists $\widetilde u_+$ so that, as $t \to +\infty$
        $$
\Vert e^{it\Delta}\widetilde u_+ - e^{it\Delta_G} u_+ \Vert_{H^1(\mathbb R^3)} \to 0.
        $$
    \end{lem}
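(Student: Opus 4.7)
The plan is to construct $\widetilde u_+$ as the strong $H^1$-limit of $t \mapsto e^{-it\Delta}e^{it\Delta_G}u_+$: once such a limit exists, the unitarity of $e^{it\Delta}$ on $H^1(\R^3)$ immediately gives
\[
\Vert e^{it\Delta}\widetilde u_+ - e^{it\Delta_G}u_+\Vert_{H^1(\R^3)} = \Vert \widetilde u_+ - e^{-it\Delta}e^{it\Delta_G}u_+\Vert_{H^1(\R^3)} \to 0.
\]
First I would observe that $(e^{-it\Delta}e^{it\Delta_G})_{t\geq 0}$ is a uniformly bounded family on $H^1(\R^3)$, since $e^{-it\Delta}$ is $H^1$-unitary while the uniform ellipticity of $G$ together with the conservation of the $\Delta_G$-energy give $\Vert e^{it\Delta_G}u_+\Vert_{H^1(\R^3)} \sim \Vert u_+\Vert_{H^1(\R^3)}$ uniformly in $t$. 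By a standard $3\varepsilon$ density argument, it therefore suffices to verify the $H^1$-Cauchy property on the dense subspace $H^{5/2}(\R^3)\subset H^1(\R^3)$.

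For $u_+\in H^{5/2}(\R^3)$ and $t_2\geq t_1\geq 0$, the fundamental theorem of calculus yields
\[
e^{-it_2\Delta}e^{it_2\Delta_G}u_+ - e^{-it_1\Delta}e^{it_1\Delta_G}u_+ = i\int_{t_1}^{t_2}e^{-is\Delta}\operatorname{div}\bigl((G-\I)\nabla e^{is\Delta_G}u_+\bigr)\,ds,
\]
and the source $g(s):=\operatorname{div}((G-\I)\nabla e^{is\Delta_G}u_+)$ is compactly supported in $x$ since $G-\I$ is. The plan is then to appeal to the dual Kato smoothing estimate for the free Schr\"odinger flow,
\[
\Big\Vert\int e^{-is\Delta}g(s)\,ds\Big\Vert_{H^1(\R^3)} \lesssim \Vert g\Vert_{L^2_s H^{1/2}_x},
\]
valid for compactly supported $g$ (and obtained by dualizing Kato smoothing applied separately to $g$ and to $\nabla g$), and to control the right-hand side by an upgraded version of the local energy decay~(\ref{eqGenLEC}),
\[
\Vert e^{is\Delta_G}u_+\Vert_{L^2_s H^{5/2}_{\rm loc}(\R^3)} \lesssim \Vert u_+\Vert_{H^{5/2}(\R^3)},
\]
obtained from Assumption~\ref{ass:mid_trap} by commuting powers of $\Delta_G$ past $e^{is\Delta_G}$, using the uniform ellipticity of $\Delta_G$ and interior elliptic regularity (plus interpolation to reach the non-integer exponent). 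This yields $g\in L^2_s H^{1/2}_x(\R)$, whose tail $\Vert g\Vert_{L^2_s H^{1/2}_x([t_1,+\infty))}$ vanishes as $t_1\to +\infty$, concluding the Cauchy property on $H^{5/2}$ and hence, by the first step, on all of $H^1(\R^3)$.

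The main obstacle is the mismatch of orders: $\Delta_G-\Delta$ is a \emph{second-order} perturbation whereas Kato smoothing for the free flow gains only half of a derivative. This loss of one-and-a-half derivatives is precisely what prevents closing the Cauchy criterion directly for $H^1$-rough data using only smoothing, and is what forces the detour through the higher-regularity dense subspace $H^{5/2}$, the uniform $H^1$-boundedness of the pre-limit being what then allows one to extend the conclusion to all of $H^1(\R^3)$ by density.
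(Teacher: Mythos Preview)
Your proof is correct, and it takes a genuinely different route from the paper's.

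The paper splits $e^{it\Delta_G}u_+ = \chi e^{it\Delta_G}u_+ + (1-\chi)e^{it\Delta_G}u_+$ with $\chi\in C^\infty_c$ equal to $1$ on a large ball containing $\operatorname{supp}(G-\I)$. The near part is shown to tend to $0$ in $H^1$ (local energy decay in $L^2$, interpolation for smoother data, then density). The far part $(1-\chi)e^{it\Delta_G}u_+$ solves a free Schr\"odinger equation with source $[\Delta,\chi]e^{is\Delta_G}u_+$, which is only a \emph{first-order} operator in $u$ and is supported far from the perturbation; there the paper invokes a genuine smoothing estimate due to Cardoso--Vodev, $\Vert\psi e^{it\Delta_G}u_0\Vert_{L^2_t H^{1/2}}\lesssim\Vert u_0\Vert_{L^2}$ for $\psi$ supported away from a large ball, which matches the half-derivative gain from dual Kato smoothing for the free flow and lets the derivative count balance at the $H^1$ level.

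Your approach runs Cook's method directly, without cutoff, so the source is the full second-order perturbation $\operatorname{div}((G-\I)\nabla\,\cdot\,)$. You use no smoothing for $e^{it\Delta_G}$ at all---only the $L^2_tL^2_{\rm loc}$ local energy decay from Assumption~\ref{ass:mid_trap}, upgraded by commuting with powers of $\Delta_G$ and interior elliptic regularity---and you pay for the missing one-and-a-half derivatives by passing through $H^{5/2}$ data and closing by density (using the uniform $H^1$-boundedness of $e^{-it\Delta}e^{it\Delta_G}$). What you gain is that the argument is entirely self-contained within Assumption~\ref{ass:mid_trap} and avoids the external Cardoso--Vodev resolvent estimate; what the paper's cutoff-and-smoothing approach gains is that, once Cardoso--Vodev is granted, the perturbation drops to first order and the estimate closes without the detour to higher regularity.
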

    \begin{proof}
    Let $\chi \in C^\infty(\mathbb R^3)$ so that $\chi = 1$ on $\operatorname{supp}(G - \I)$.      First, observe that $\chi e^{it\Delta_G} u_+ \to 0$ in $H^1(\mathbb R^3)$. Indeed, one can show from (\ref{eqGenLEC}), similarly to the proof of Corollary \ref{cor:Ubound2}, that $\chi e^{it\Delta_G} u_+ \to 0$ in $L^2(\mathbb R^3)$. This implies the claim by interpolation for $u_+ \in H^s(\mathbb R^3)$ with $s>1$, and the claim then follows for any $u_+\in H^1(\R ^3)$ by approximation. Hence, it suffices to show the existence of $\widetilde u_+ \in H^1(\mathbb R^3)$ so that
    $$
 \Vert e^{it\Delta}\widetilde u_+ - (1-\chi)e^{it\Delta_G} u_+ \Vert_{H^1(\mathbb R^3)} \to 0.
     $$
    Let $v(t) := (1-\chi)e^{it\Delta_G} u_+$. It verifies
    $$
i\partial_t v + \Delta v = [\Delta, \chi] u,
    $$
    hence from Duhamel's formula
    $$
    v(t) = e^{it\Delta}\Big( (1-\chi) u_+ - i\int_0^t e^{-is\Delta}  [\Delta, \chi] u(s) ds  \Big).
    $$
    Observe that the resolvent estimate without loss sufficiently far from the origin of Cardoso-Vodev \cite{CaVo02} implies with the arguments of \cite[Proposition 2.7]{BuGeTz04b}, the following smoothing estimate without loss far away from $\operatorname{supp}(G-I)$ for $e^{it\Delta_G}$: there exists $R\gg 1$ big enough so that for any $\psi \in C^\infty_c(\mathbb R^3)$ supported away from $B(0,R)$ and any $u_0 \in L^2$
    \begin{equation}    \label{eq:smooth_away}
    \Vert \psi e^{it\Delta_G} u_0 \Vert_{L^2(\mathbb R, H^{\frac 12}(\R ^3))}
    \lesssim \Vert u_0 \Vert_{L^2}.
    \end{equation}
    Taking $\chi =1$ on $B(0,R)$, from the dual estimate to the smoothing effect for $e^{-is\Delta}$ together with (\ref{eq:smooth_away}), we obtain as $[\Delta, \chi]$ is supported away from $B(0,R)$:
    $$
    \Big\Vert \int_t^\infty e^{-is\Delta}  [\Delta, \chi] u(s) ds \Big\Vert_{H^1(\R^3) }\to 0,
    $$
    hence the result follows by setting
    $$
    \widetilde u_+ := (1-\chi) u_+ - i\int_0^\infty e^{-is\Delta}  [\Delta, \chi] u(s) ds.
    $$
    \end{proof}
    This concludes the proof of Theorem \ref{th:mid_trapp}.
    \end{proof}
    \subsection*{Acknowledgments} We are grateful to professor Nicola Visciglia for highlighting that we could improve our initial result of scattering in $H^{\frac 12}$ to $H^{1-}$ by interpolation.
\bibliographystyle{abbrv}
\bibliography{biblio}
\end{document}